 \numberwithin{equation}{section}
\theoremstyle{plain}
\newtheorem{thm}{Theorem}[section]
\newtheorem{cor}[thm]{Corollary}
\newtheorem{lem}[thm]{Lemma}
\newtheorem{prop}[thm]{Proposition}
\theoremstyle{definition}
\newtheorem{ex}[thm]{Example}
\theoremstyle{remark}
\newtheorem{rem}[thm]{Remark}
\newcommand{\N}{\mathbb{N}}
\newcommand{\R}{\mathbb{R}}
\newcommand{\E}{\mathbb{E}} 
\newcommand{\PP}{\mathbb{P}} 
\newcommand{\I}{\infty}
\newcommand{\bp}{\begin{proof}[\ensuremath{\mathbf{Proof}}]}
\newcommand{\ep}{\end{proof}}
\DeclareMathOperator{\tr}{tr}
\DeclareMathOperator{\diag}{diag}
\newcommand{\Sn}{S_n(\R)}
\newcommand{\Mn}{M_n(\R)}
\newcommand{\beps}{\beta_\epsilon}
\newcommand{\diam}{\text{diam}}
\newcommand{\dist}{\text{dist}}
\begin{document}

%Title

\title{ An eigenvalue problem for fully nonlinear elliptic equations with gradient constraints}

\author{Ryan Hynd\footnote{Department of Mathematics, University of Pennsylvania. Partially supported by NSF grant DMS-1301628.}}  

\maketitle

%  Abstract  
\begin{abstract}
We consider the problem of finding $\lambda\in \mathbb{R}$ and a function $u:\mathbb{R}^n\rightarrow\mathbb{R}$ that satisfy the PDE
$$
\max\left\{\lambda + F(D^2u) -f(x),H(Du)\right\}=0, \quad x\in \mathbb{R}^n.
$$
Here $F$ is elliptic, positively homogeneous and superadditive, $f$ is convex and superlinear, and $H$ 
is typically assumed to be convex.  Examples of this type of PDE arise in the theory of singular ergodic control.  
We show that there is a unique $\lambda^*$ for which the above equation has a solution $u$ with appropriate growth as 
$|x|\rightarrow \infty$.  Moreover, associated to $\lambda^*$ is a convex solution $u^*$ that has bounded second derivatives, provided
$F$ is uniformly elliptic and $H$ is uniformly convex.  It is unknown whether or not $u^*$ is unique up to an additive constant; however, 
we verify this is the case when $n=1$ or when $F, f,H$ are ``rotational." 
\end{abstract}

%\tableofcontents

%%%%%%%%%%%%%%%%%%%%%%%%%%%%%%%%%% Introduction %%%%%%%%%%%%%%%%%%%%%%%%%%%%%%%%%
\section{Introduction}
% PDE and problem 
% Relationship between H and ell 
% State main results 
The eigenvalue problem of singular ergodic control is to find a real number $\lambda$ and 
function $u:\R^n\rightarrow\R$ that satisfy the PDE
 \begin{equation}\label{LinEigProb}
\max\left\{\lambda -\Delta u -f(x),|Du|-1\right\}=0, \quad x\in \R^n.
\end{equation}
Here $Du=(u_{x_i})$ is the gradient of $u$, $\Delta u=\sum^{n}_{i=1}u_{x_ix_i}$ is the usual Laplacian, and $f$ is assumed to be convex and superlinear 
$$
\lim_{|x|\rightarrow \infty}\frac{f(x)}{|x|}=\infty.
$$
We call any such $\lambda$ an {\it eigenvalue}.  In previous work \cite{Hynd}, we showed there is a unique eigenvalue $\lambda^*\in \R$ such that the PDE \eqref{LinEigProb} admits a viscosity solution $u$ satisfying the growth condition
$$
\lim_{|x|\rightarrow \infty}\frac{u(x)}{|x|}=1. 
$$
Moreover, associated to $\lambda^*$, there is always one solution $u^*$ that is convex with 
$D^2u^*\in L^\I(\R^n;\Sn)$. Here $\Sn$ denotes the collection of real, symmetric $n\times n$ matrices. 

\par The eigenvalue $\lambda^*$ is also known to have the ergodic control theoretic interpretation
$$
 \lambda^*:=\inf_{\nu}\limsup_{t\rightarrow \infty}\frac{1}{t}\left\{\E\int^{t}_{0}f\left(\sqrt{2}W(s) + \nu(s)\right)ds + |\nu|(t)\right\}
$$
as shown in \cite{Menaldi}. Here $(W(t),t\ge 0)$ is an $n$-dimensional Brownian motion on a probability space $(\Omega, {\mathcal F}, \mathbb{P})$ 
and $\nu$ is an $\R^n$ valued control process. Each $\nu$ is required to be adapted to 
the filtration generated by $W$ and satisfy
$$
\begin{cases}
\nu(0)=0\;\\
t\mapsto \nu(t) \; \text{is left continuous}\\
|\nu|(t)<\infty , \;\text{for all}\; t> 0\;
\end{cases}
$$
$\PP$ almost surely; the notation $|\nu|(t)$ denotes the total variation of $\nu$ restricted to the interval $[0,t)$.  We say $\nu$ is a {\it singular control} as it may have sample paths that are not be absolutely continuous with respect to the standard Lebesgue measure on $[0,\infty)$.  We refer the reader to \cite{Borkar, Fleming, Oksendal} for more information on how PDE arise in singular stochastic control. 

\par We also showed in \cite{Hynd} that $\lambda^*$ is given by the following ``minmax" formula
\begin{equation}\label{minmaxLap}
\lambda^*=\inf\left\{\sup_{|D\psi(x)|<1}\left\{\Delta\psi(x) + f(x)\right\} :  \psi\in C^2(\R^n), \; \liminf_{|x|\rightarrow \infty}\frac{\psi(x)}{|x|}\ge 1 \right\}
\end{equation}
and the ``maxmin" formula
\begin{equation}\label{maxminLap}
\lambda^*=\sup\left\{\inf_{x\in\R^n}\left\{\Delta\phi(x) + f(x)\right\} :  \phi\in C^2(\R^n), \, |D\phi|\le 1 \right\}.
\end{equation}
The purpose of this paper is to verify generalizations of these results.

\par In particular, we consider the following eigenvalue problem: find $\lambda\in \R$ and $u:\R^n\rightarrow \R$ satisfying the PDE
\begin{equation}\label{EigProb}
\max\left\{\lambda + F(D^2u) -f(x),H(Du)\right\}=0, \quad x\in \R^n.
\end{equation}
Here $D^2u=(u_{x_ix_j})$ is the hessian of $u$.  A standing assumption in this paper is that the nonlinearity 
$F:\Sn\rightarrow \R$ is elliptic,  positively homogeneous, and superadditive:
\begin{equation}\label{Fassump}
\begin{cases}
-\Theta\tr N\le F(M+N)-F(M)\le -\theta\tr N, \quad (N\ge 0)
\\
F(tM)=tF(M)\
\\
F(M)+F(N)\le F(M+N)\
\end{cases}
\end{equation}
for each $M,N\in \Sn$, $t\ge 0$ and some $\theta,\Theta\ge 0$. If $\theta>0$, we say $F$ is uniformly elliptic.  For instance, in \eqref{LinEigProb} $F$ is the linear function $F(M)=-\tr M.$
And a more typical nonlinear example we have in mind is
$$
F(M)=\min_{1\le k\le N}\{-\tr(A_kM)\},
$$
where each $\{A_k\}_{k=1,\dots,N}\subset\Sn$ satisfies
$$
\theta|\xi|^2\le A_k\xi\cdot\xi\le \Theta|\xi|^2, \quad \xi\in \R^n.
$$

\par We will assume throughout that the gradient constraint function $H\in C(\R^n)$ satisfies 
\begin{equation}\label{Hassump}
\begin{cases}
H(0)<0\\
\{p\in \R^n: H(p)\le 0\}\; \text{is compact and strictly convex.}
\end{cases}
\end{equation}
In the motivating equation \eqref{LinEigProb}, $H(p)=|p|-1$. And in view of the results of \cite{Hynd}, it is natural 
to study solutions of \eqref{EigProb} subject to a suitable growth condition.  To this end, we define the function 
$$
\ell(v):=\max\{p\cdot v: H(p)\le 0\}, \quad v\in \R^n
$$
which is also known as the support function of the convex set $\{p\in \R^n: H(p)\le 0\}$.

\par  Note that we can replace $H$ in \eqref{EigProb} 
with the explicit convex gradient constraint 
$$
H_0(p):=\max_{|v|=1}\left\{p\cdot v -\ell(v)\right\}
$$
since $H(p)\le 0$ if and only if $H_0(p)\le 0$ (Theorem 8.24 in \cite{Rock}).  This is something we will do repeatedly in the work that follows. We also note 
that by the assumptions \eqref{Hassump}, there are positive constants $c_0, c_1$ such that 
\begin{equation}\label{UpperLowerell}
c_0|v|\le \ell(v)\le c_1|v|, \quad v\in \R^n
\end{equation}
and consequently
\begin{equation}\label{UpperLowerH}
|p|-c_1\le H_0(p)\le |p|-c_0, \quad p\in \R^n.
\end{equation}

\par The main result of this paper is as follows. 
\begin{thm}\label{Thm1}
Assume \eqref{Fassump}, \eqref{Hassump}, and that $f$ is convex and superlinear.  \\
(i) There is a unique $\lambda^*\in\R$ such that \eqref{EigProb} has a viscosity solution $u\in C(\R^n)$ 
satisfying the growth condition
\begin{equation}\label{ellgrowth}
\lim_{|x|\rightarrow \infty}\frac{u(x)}{\ell(x)}=1.
\end{equation}
Associated to $\lambda^*$ is a convex viscosity solution $u^*$ that satisfies \eqref{ellgrowth}. \\
(ii) Suppose that $F$ is uniformly elliptic, $H$ is convex and that there are $\sigma,\Sigma>0$ such that 
\begin{equation}\label{Hassump2}
\sigma|\xi|^2\le D^2H(p)\xi\cdot \xi\le \Sigma|\xi|^2, \quad \xi\in\R^n
\end{equation}
for Lebesgue almost every $p\in \R^n$. Then we may choose $u^*$ to satisfy $D^2u^*\in L^\I(\R^n;\Sn)$.
\end{thm}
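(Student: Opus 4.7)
My plan is to adapt the approach of \cite{Hynd} for the linear case, with the main new ingredients being the fully nonlinear $F$ and the general gradient constraint $H$. The candidate eigenvalue is produced by the minmax formula
$$
\lambda^* := \inf\left\{\sup_{H(D\psi(x))<0}\!\bigl\{-F(D^2\psi(x)) + f(x)\bigr\} : \psi\in C^2(\R^n),\ \liminf_{|x|\to\infty}\frac{\psi(x)}{\ell(x)}\ge 1\right\},
$$
which reduces to \eqref{minmaxLap} when $F(M)=-\tr M$ and $\ell(x)=|x|$. To realize $\lambda^*$ as an honest eigenvalue I would introduce the penalized problem
$$
\lambda_{R,\epsilon} + F(D^2 u) + \beps(H(Du)) = f(x)\quad\text{in }B_R,\qquad u = 0 \text{ on }\partial B_R,
$$
where $\beps$ is a smooth, convex, increasing penalty vanishing on $(-\I,-\epsilon]$ and blowing up at $0$, and the constant $\lambda_{R,\epsilon}$ is selected by a normalization at the origin. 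Existence of $(\lambda_{R,\epsilon},u_{R,\epsilon})$ on each ball comes from standard fully nonlinear elliptic theory via Perron's method, using \eqref{Fassump} and the convexity of $H_0$ for comparison. The core task is then to produce $(R,\epsilon)$-uniform control sufficient to pass to the limits $R\to\I$ and $\epsilon\to 0^+$ and recover a viscosity solution of \eqref{EigProb} satisfying \eqref{ellgrowth}.

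\textbf{Existence, convexity, and uniqueness of $\lambda^*$.} Superadditivity and positive homogeneity of $F$ imply that $-F$ is sublinear, hence that $F$ is concave on $\Sn$; combined with convexity of $f$ and of the composition $\beps\circ H_0$, this supplies the structural ingredients for convexity preservation. I would show $u_{R,\epsilon}$ may be chosen convex by a standard maximum principle argument applied to second derivatives $(u_{R,\epsilon})_{\xi\xi}$, so that the limit $u^*$ is convex. The two-sided growth \eqref{ellgrowth} is obtained by using the support function $\ell$ to build explicit super- and subsolution barriers at infinity, exploiting \eqref{UpperLowerell}--\eqref{UpperLowerH} together with superlinearity of $f$ to pin down the rate of growth. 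For uniqueness of $\lambda^*$, I would argue as in \cite{Hynd}: if two eigenvalues $\lambda_1<\lambda_2$ both admitted solutions with the $\ell$-growth \eqref{ellgrowth}, then subtracting and using concavity of $F$ in the elliptic region $\{H(Du)<0\}$, together with the common constraint $\{H(Du)\le 0\}$ and the growth condition, would force a contradiction via the maximum principle.

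\textbf{Part (ii): the $L^\I$ estimate on $D^2 u^*$.} This is the technical heart of the theorem, and I expect it to be the main obstacle. I would establish an $\epsilon$-independent bound on $D^2 u_{R,\epsilon}$ by a Bernstein-type argument. Differentiating the penalized PDE twice in an arbitrary unit direction $\xi$ produces an inequality for $w := (u_{R,\epsilon})_{\xi\xi}$ in which three favorable terms appear: (a) uniform ellipticity of $F$ supplies a uniformly elliptic linearized operator acting on $w$; (b) the uniform convexity \eqref{Hassump2} gives a lower bound $\beps'(H(Du))\,D^2 H(Du)(Du_\xi,Du_\xi)\ge \sigma\,\beps'(H(Du))\,|Du_\xi|^2$, which absorbs the dangerous first-order-in-$Du_\xi$ term generated by differentiating $\beps(H(Du))$; and (c) convexity of $f$ yields $f_{\xi\xi}\ge 0$. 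Applying the maximum principle on $B_R$ to $w$ together with a suitable auxiliary cutoff/barrier tied to $u_{R,\epsilon}$ itself produces a bound on $w$ independent of $\epsilon$ and $R$, which passes to the limit to give $D^2u^*\in L^\I(\R^n;\Sn)$. The delicate point is orchestrating the Bernstein identity so that the singular factor $\beps'(H(Du))\to\I$ near the free boundary $\{H(Du)=0\}$ is fully absorbed by the positive contribution coming from \eqref{Hassump2}; without uniform convexity of $H$ this cancellation fails, which is precisely why the hypothesis is needed.
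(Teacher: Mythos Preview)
Your outline diverges from the paper's proof in its basic architecture, and a couple of the steps you sketch do not quite work as stated.

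\textbf{Existence.} The paper does \emph{not} define $\lambda^*$ by the minmax formula and then try to realize it; nor does it penalize on balls with a floating constant $\lambda_{R,\epsilon}$ ``selected by normalization.'' Instead it runs the vanishing-discount method: for each $\delta\in(0,1)$ one solves the global problem $\max\{\delta u+F(D^2u)-f,\,H(Du)\}=0$ on $\R^n$ subject to the $\ell$-growth, obtaining a unique solution $u_\delta$ by Perron's method with explicit barriers built from $\ell$ and $\tfrac12|x|^2$. Convexity of $u_\delta$ is proved by a Korevaar-style \emph{tripling}-of-variables argument in the viscosity framework, not by differentiating. One then sets $\lambda_\delta:=\delta\min u_\delta$ and $v_\delta:=u_\delta-\min u_\delta$; uniform bounds on $(\lambda_\delta,v_\delta)$ are automatic, and an extension formula $u_\delta(x)=\inf_{y\in\Omega_\delta}\{u_\delta(y)+\ell(x-y)\}$ (with $\Omega_\delta$ bounded uniformly in $\delta$) supplies the lower $\ell$-growth for the limit $u^*$. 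Your scheme of choosing $\lambda_{R,\epsilon}$ by a pointwise normalization is not obviously well-posed, and you would still need a mechanism to control $\lambda_{R,\epsilon}$ and the growth of $u_{R,\epsilon}$ uniformly in both parameters---the discount factor $\delta$ does exactly this job for free.

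\textbf{Uniqueness.} ``Subtracting and using concavity of $F$'' is not a valid step for viscosity solutions of fully nonlinear equations. The paper proves a comparison principle for eigenvalues by the standard doubling-of-variables technique, with the crucial twist of replacing $u$ by $\tau u$ for $\tau\in(0,1)$: this forces $H_0$ of the doubled gradient to be strictly negative (using $H_0(0)<0$ and convexity of $H_0$), so the supersolution inequality is active on the $F$-branch. Positive homogeneity of $F$ then lets one compare $F(X)$ and $F(Y)$ when $X\le Y$. Without this $\tau$-trick the gradient-constraint branch obstructs comparison.

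\textbf{Part (ii).} The paper's route to $D^2u^*\in L^\infty$ is \emph{not} a direct second-derivative Bernstein estimate on the penalized equation. It proceeds in two regions. Far from $\Omega_\delta$, a semiconcavity bound $D^2u_\delta\le C/\mathrm{dist}(x,\Omega_\delta)\,I_n$ comes from the extension formula together with the representation $\ell(v)=\inf_{\lambda>0}\lambda H^*(v/\lambda)$ and the uniform bounds on $D^2H^*$ implied by \eqref{Hassump2}. Near $\Omega_\delta$, the penalty method yields, via a Bernstein argument applied to $|Du^\epsilon|^2$ (not to $u^\epsilon_{\xi\xi}$), a local bound on $\beta_\epsilon(H(Du^\epsilon))$, hence on $F^\varrho(D^2u^\epsilon)$; Caffarelli's $W^{2,p}$ estimates and weak limits then give $F(D^2u_\delta)\ge -C$ a.e. Finally, convexity of $u_\delta$ plus uniform ellipticity convert the one-sided bound $F(D^2u_\delta)\ge -C$ into $0\le D^2u_\delta\le (C/\theta)I_n$. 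Your proposed direct Bernstein bound on $(u^\epsilon)_{\xi\xi}$ is a plausible alternative in spirit, but the paper deliberately avoids it; in particular the absorption you describe in (b) controls $|Du_\xi|^2$, not $u_{\xi\xi}$ itself, and closing the estimate for the pure second derivative on the free-boundary side would require an additional argument you have not supplied.
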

\par When $\lambda=\lambda^*$ in \eqref{EigProb}, we will call solutions that satisfy the growth condition \eqref{ellgrowth} {\it eigenfunctions}.  It is unknown if eigenfunctions are unique up to an additive constant. However, we establish below that when $n=1$ any two convex eigenfunctions differ
by a constant; see Proposition \ref{UniqunessN1}.  We also show that if $F$, $f$ and $H$ are ``rotational," then $u^*$ can be chosen radial and twice continuously differentiable. This generalizes Theorem 2.3 of \cite{Kruk} and Theorem 1.3 of our previous work \cite{Hynd}. 

\begin{thm}\label{SymmRegThm}
Suppose 
\begin{equation}\label{SymmetryCond}
\begin{cases}
f(Ox)=f(x)\\
H(O^tp)=H(p)\\
F(OMO^t)=F(M)
\end{cases}
\end{equation}
for each $x,p\in \R^n$, $M\in \Sn$ and orthogonal $n\times n$ matrix $O$. If $F$ is uniformly elliptic and $H$ satisfies \eqref{Hassump2}, then there is a radial eigenfunction $u^*\in C^2(\R^n)$.
\end{thm}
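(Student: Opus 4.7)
The plan is to construct a radial $C^2$ eigenfunction directly, via reduction to an ordinary differential equation, and then to identify it with the eigenvalue $\lambda^*$ produced by Theorem~\ref{Thm1}(i).

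Under \eqref{SymmetryCond}, the sublevel set $\{H\le 0\}$ is a compact, strictly convex, $O(n)$-invariant subset of $\R^n$, hence a closed ball $\overline{B_{\rho_0}(0)}$ for some $\rho_0>0$; consequently $\ell(x)=\rho_0|x|$. Inserting the ansatz $u(x)=U(r)$ with $r=|x|$, one computes that $D^2u$ has eigenvalues $U''(r)$ (radially) and $U'(r)/r$ (with multiplicity $n-1$, tangentially), and the symmetry hypotheses on $F$, $f$, $H$ reduce \eqref{EigProb} to the scalar equation
\begin{equation*}
\max\!\left\{\lambda+\Phi\!\left(U''(r),\tfrac{U'(r)}{r}\right)-\tilde f(r),\;\tilde H(U'(r))\right\}=0,\qquad r\ge 0,
\end{equation*}
where $\Phi(a,b):=F(\diag(a,b,\dots,b))$, $\tilde f(r):=f(re_1)$, and $\tilde H(s):=H(se_1)$ satisfies $\tilde H(s)\le 0\iff|s|\le\rho_0$. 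The solution I would look for has a free boundary structure: a radius $R>0$ such that on $[0,R]$ the interior equation $\lambda+\Phi(U'',U'/r)=\tilde f(r)$ holds with $U(0)=U'(0)=0$, on $[R,\infty)$ one sets $U(r)=U(R)+\rho_0(r-R)$, and the pieces fit together in $C^2$, meaning $U'(R)=\rho_0$ and $U''(R)=0$.

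The main step is a shooting argument. Uniform ellipticity of $F$ implies $a\mapsto\Phi(a,b)$ is a strictly decreasing homeomorphism of $\R$ for each fixed $b$, so the interior equation can be solved for $U''$ as a continuous function $\Psi(r,U',\lambda)$; standard ODE theory then produces a unique solution $U_\lambda\in C^2$ of the Cauchy problem with $U_\lambda(0)=U_\lambda'(0)=0$. Superlinearity of $f$ and the ellipticity estimates imply that for $\lambda$ in an appropriate range, $U_\lambda$ is convex and $U_\lambda'$ reaches $\rho_0$ at a first time $R(\lambda)<\infty$. The $C^2$ matching condition reduces to the single scalar equation $\lambda+\Phi(0,\rho_0/R(\lambda))=\tilde f(R(\lambda))$; existence of a solution $\bar\lambda$ follows from a continuity and intermediate-value argument monitoring $R(\lambda)$ as $\lambda$ varies.

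Setting $u(x):=U(|x|)$ with this choice of $U$ and $\bar\lambda$, the condition $U'(0)=0$ guarantees $u\in C^2(\R^n)$. Verifying that $u$ is a viscosity solution of \eqref{EigProb} amounts to checking the inequality $\bar\lambda+\Phi(0,\rho_0/r)\le\tilde f(r)$ for $r\ge R$, which follows from the interior equation at $r=R$ together with convexity and monotonicity of $\tilde f$. The linear extension yields \eqref{ellgrowth}, and uniqueness from Theorem~\ref{Thm1}(i) forces $\bar\lambda=\lambda^*$. The principal obstacle is the shooting argument, specifically the continuous and suitably monotone dependence of $R(\lambda)$ on $\lambda$; the assumption \eqref{Hassump2} enters here to ensure the regular behavior of the exterior region and the $C^2$ fit across the free boundary.
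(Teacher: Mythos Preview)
Your approach is genuinely different from the paper's and, while plausible in outline, has real gaps beyond the one you flag.

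The paper does not build a radial solution from scratch. It observes that under \eqref{SymmetryCond} the approximations $u_\delta$ are radial (since $x\mapsto u_\delta(Ox)$ is another solution with the same growth, and Proposition~\ref{DeltaCompProp} forces equality), so the limit $u^*$ produced in Theorem~\ref{Thm1}(i) is already radial and convex. Theorem~\ref{Thm1}(ii) then supplies $u^*\in C^{1,1}_{\mathrm{loc}}(\R^n)$, and the ODE structure gives $u^*\in C^2$ away from the single free-boundary radius $r_0$. The only thing left is to show $\phi''(r_0-)=0$; the paper gets this in two lines by comparing the supersolution inequality as $r\to r_0^+$ with the interior equation as $r\to r_0^-$ and using monotonicity of $G$. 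Crucially, the subsolution inequality for $r\ge r_0$ is \emph{input}, not something to be checked, because $u^*$ is already known to be a viscosity solution. The hypothesis \eqref{Hassump2} enters only through Theorem~\ref{Thm1}(ii); your description of its role is off.

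Your route reverses this logic: you impose $U''(R)=0$ by fiat and must then verify the subsolution condition $\bar\lambda+\Phi(0,\rho_0/r)\le\tilde f(r)$ for $r\ge R$. This does \emph{not} follow from ``convexity and monotonicity of $\tilde f$'' alone: both sides increase in $r$, and you need the derivative inequality $\tilde f'(R)\ge\rho_0|\Phi(0,1)|/R^2$ at the contact point. That inequality can be extracted (differentiate the interior equation, use $U'''(R^-)\le 0$ since $U''\ge0$ on $[0,R]$ with $U''(R)=0$, and ellipticity), but it is an additional argument, and it presupposes exactly the convexity of $U_\lambda$ that you assert without proof. You should also note that the ODE $U''=\Psi(r,U',\lambda)$ is singular at $r=0$ because of the $U'/r$ term, so the Cauchy problem needs care. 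None of this is fatal, but the paper's route avoids all of it by leveraging Theorem~\ref{Thm1}.
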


\par In Proposition \ref{ConvUniqueness} below, we assume \eqref{SymmetryCond} and show any two convex, radial eigenfunctions differ by an additive constant.  
Unfortunately, we do not know if this symmetry assumption ensures that every eigenfunction is radial.  Finally, we verify a minmax formula for $\lambda^*$ which is the fully nonlinear analog of the formula \eqref{minmaxLap}. However, for nonlinear $F$, we only establish an inequality corresponding to the formula \eqref{maxminLap}. 
\begin{thm}\label{minmaxThm}
Define
$$
\lambda_+:=\inf\left\{\sup_{H(D\psi(x))<0}\left\{-F(D^2\psi(x)) + f(x)\right\} :  \psi\in C^2(\R^n), \; \liminf_{|x|\rightarrow \infty}\frac{\psi(x)}{\ell(x)}\ge 1 \right\}.
$$
and 
$$
\lambda_-:=\sup\left\{\inf_{x\in\R^n}\left\{-F(D^2\phi(x)) + f(x)\right\} :  \phi\in C^2(\R^n), \, H(D\phi)\le 0 \right\}
$$
Then 
$$
\lambda_-\le \lambda^*\le\lambda_+.
$$
If there is an eigenfunction $u^*$ that satisfies $D^2u^*\in L^\I(\R^n;\Sn)$, then 
$\lambda^*=\lambda_+.$
\end{thm}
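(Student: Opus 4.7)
The plan is to establish the two bounds $\lambda^*\le\lambda_+$ and $\lambda^*\ge\lambda_-$ by direct comparison arguments with the eigenfunction $u^*$ from Theorem \ref{Thm1}, and then to upgrade the first inequality to an equality by mollifying $u^*$ into an admissible test function when $D^2u^*\in L^\I$. For $\lambda^*\le\lambda_+$, fix an admissible $\psi$ and set $\mu:=\sup_{H(D\psi)<0}\{-F(D^2\psi)+f\}$. By the definition of $\mu$, $\psi$ is a classical supersolution of the $\mu$--eigenvalue equation. The growth conditions $u^*/\ell\to 1$ and $\liminf\psi/\ell\ge 1$ do not quite match, so I perturb and consider $u^*-(1+\delta)\psi$ for small $\delta>0$; this difference tends to $-\infty$ and attains a maximum at some $x_\delta\in\R^n$. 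Touching $u^*$ from above by $(1+\delta)\psi+M_\delta$ and invoking the viscosity subsolution property together with $1$--homogeneity of $F$ give
\begin{equation*}
\max\bigl\{\lambda^*+(1+\delta)F(D^2\psi(x_\delta))-f(x_\delta),\;H((1+\delta)D\psi(x_\delta))\bigr\}\le 0.
\end{equation*}
Since $0\in\operatorname{int}\{H\le 0\}$ (as $H(0)<0$), convexity forces $H(D\psi(x_\delta))<0$, so the definition of $\mu$ applies at $x_\delta$ and yields $F(D^2\psi(x_\delta))\ge f(x_\delta)-\mu$. Combined with the first entry above, this gives $\delta f(x_\delta)\le(1+\delta)\mu-\lambda^*$, and since $f$ is bounded below by superlinearity, sending $\delta\to 0$ produces $\mu\ge\lambda^*$.

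The inequality $\lambda^*\ge\lambda_-$ is the mirror image. For admissible $\phi$ with $\nu:=\inf_x\{-F(D^2\phi)+f\}$, the function $\phi$ is a classical subsolution of the $\nu$--equation, and $\phi\le\ell+\phi(0)$ because $H(D\phi)\le 0$. Comparing $\phi-(1+\delta)u^*$, applying the supersolution property of $u^*$ at the maximizer with the test function $(\phi-M_\delta)/(1+\delta)$ from below, and again using $D\phi(x_\delta)/(1+\delta)\in\operatorname{int}\{H\le 0\}$ by convexity, one arrives at $\delta f(x_\delta)\le(1+\delta)\lambda^*-\nu$, whence $\nu\le\lambda^*$ as $\delta\to 0$.

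For the equality under $D^2u^*\in L^\I$, the natural competitor is $u^*_\epsilon:=u^*\ast\rho_\epsilon\in C^\I$ for a standard mollifier, which is admissible because $Du^*\in L^\I$ and $\ell(x)\to\I$. A useful first observation is that, since $u^*$ is convex and $F(N)\le 0$ for $N\ge 0$, the equation forces $\{H(Du^*)<0\}\subseteq\{f\le\lambda^*\}$, which is compact. Because $F$ is concave (superadditivity plus $1$--homogeneity), Jensen's inequality gives $-F(D^2u^*_\epsilon)\le(-F(D^2u^*))\ast\rho_\epsilon$ pointwise, and on points sitting in the interior of $\{H(Du^*)<0\}$ the a.e.\ equation $-F(D^2u^*)+f=\lambda^*$ lets one conclude $-F(D^2u^*_\epsilon)+f\le\lambda^*+o_\epsilon(1)$. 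The hard part is controlling the contribution of the free boundary $\{H(Du^*)=0\}$, where the equation only reads $-F(D^2u^*)+f\ge\lambda^*$ and the slack is a priori comparable to $f(x)$. At points $x$ with $H(Du^*_\epsilon(x))<0$ but with $Du^*$ on $\partial\{H\le 0\}$ throughout a neighborhood of $x$, I would exploit strict convexity of $\{H\le 0\}$ together with the Lipschitz bound $|Du^*_\epsilon(x)-Du^*(x)|\le\epsilon\|D^2u^*\|_\I$ to show that such $x$ are confined to a thin set on which the averaged Jensen inequality is nearly sharp and the oscillation of $f$ is $O(\epsilon)$; making this quantitative estimate rigorous is the technical crux of the argument.
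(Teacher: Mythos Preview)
Your arguments for the two inequalities $\lambda_-\le\lambda^*\le\lambda_+$ are correct and are essentially the comparison that the paper packages into the characterizations \eqref{LamChar1}--\eqref{LamChar2}. (One minor point: to pass from $H((1+\delta)D\psi(x_\delta))\le 0$ to $H(D\psi(x_\delta))<0$ you should use the convex function $H_0$, not $H$; the paper does the same.)

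The gap is in your treatment of $\lambda^*=\lambda_+$. The difficulty is not merely ``near the free boundary'': with the bare mollification $u^*_\epsilon$, the constraint set $\{H(Du^*_\epsilon)<0\}$ is typically \emph{unbounded}. Far from $\Omega_0=\{H(Du^*)<0\}$ the gradient $Du^*$ lives on $\partial\{H\le 0\}$, and by the strict convexity you yourself invoke, any nontrivial average of boundary points lands in the interior; hence $H(Du^*_\epsilon(x))<0$ for arbitrarily large $|x|$ (for example, when $H(p)=|p|-1$ one computes $|Du^*_\epsilon(x)|<1$ for all large $|x|$). On this unbounded set $-F(D^2u^*_\epsilon)\ge 0$ by convexity of $u^*$, while $f(x)\to\infty$, so
\[
\sup_{H(Du^*_\epsilon)<0}\bigl\{-F(D^2u^*_\epsilon)+f\bigr\}=+\infty,
\]
and $u^*_\epsilon$ is useless as a competitor for $\lambda_+$. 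Your proposed fix (``such $x$ are confined to a thin set'') is therefore false.

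The missing idea, which the paper supplies, is to \emph{scale}: take $\psi=\tau u^*_\epsilon$ with $\tau>1$. Convexity of $H_0$ then gives, on $\{H(\tau Du^*_\epsilon)<0\}$,
\[
H_0(Du^*_\epsilon)\le \tfrac{1}{\tau}H_0(\tau Du^*_\epsilon)+\tfrac{\tau-1}{\tau}H_0(0)<\tfrac{\tau-1}{\tau}H_0(0)<0,
\]
and combining with $|Du^*_\epsilon-Du^*|\le\epsilon\|D^2u^*\|_\infty$ forces $H_0(Du^*)<-\varrho$ for some $\varrho>0$ once $\epsilon$ is small (depending on $\tau$). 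Thus $\{H(D\psi)<0\}$ is contained in a fixed compact subset of $\Omega_0$, in fact in $\Omega^\epsilon=\{\mathrm{dist}(\cdot,\partial\Omega_0)>\epsilon\}$, and there your Jensen/concavity step applies cleanly to yield
\[
\lambda_+\le \sup_{H(D\psi)<0}\bigl\{-\tau F(D^2u^*_\epsilon)+f\bigr\}\le \lambda^*+o_\epsilon(1)+O(\tau-1).
\]
Send $\epsilon\to 0$ and then $\tau\to 1^+$.
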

The organization of this paper is as follows. In section \ref{CompSect}, we verify the uniqueness of eigenvalues as detailed in Theorem \ref{Thm1}. 
Then we consider the existence of an eigenvalue $\lambda^*$ in section \ref{ExistSec}. Next, we verify Theorem \ref{SymmRegThm} in section \ref{RegSect} and prove Theorem \ref{SymmRegThm} in section \ref{1DandRotSymmSect}. Section \ref{MinMaxSect} of this paper is dedicated to the proof of Theorem \ref{minmaxThm}.  Finally, we would like to acknowledge hospitality of the University of Pennsylvania's Center of Race $\&$ Equity in Education where part of this paper was written.

%%%%%%%%%%%%%%%%%%%%%%%%%%%%%%%%%% Comparison %%%%%%%%%%%%%%%%%%%%%%%%%%%%%%%%%
\section{Comparison principle}\label{CompSect}
In this section, we show there can be at most one eigenvalue as detailed in Theorem \ref{Thm1}.  As equation \eqref{EigProb} is a fully 
nonlinear elliptic equation for a scalar function $u$, we will employ the theory of viscosity solutions \cite{Bardi, Crandall, CIL, Fleming}. In particular, we will use results and notation from 
the ``user guide" \cite{CIL}.  Moreover, going forward we typically will omit the modifier ``viscosity" when we refer to sub- and supersolutions.  We begin our discussion with a basic proposition about subsolutions 
of the first order PDE $H(Du)=0$. 
\begin{lem}\label{HLipLem}
A function $u\in C(\R^n)$ satisfies 
\begin{equation}\label{HSubsoln}
H(Du(x))\le 0, \quad x\in \R^n
\end{equation}
if and only if 
\begin{equation}\label{ellLip}
u(x)-u(y)\le \ell(x-y), \quad x,y\in \R^n. 
\end{equation}
\end{lem}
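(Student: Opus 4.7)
The plan is to reduce the equation to the support-function form and then run each direction separately. Following the paper's observation, I would first replace $H$ with the explicit gradient constraint $H_0(p)=\max_{|v|=1}\{p\cdot v-\ell(v)\}$, since at the pointwise level $H(p)\le 0\Leftrightarrow H_0(p)\le 0\Leftrightarrow p\cdot v\le \ell(v)$ for every $v\in\R^n$ (by positive $1$-homogeneity of $\ell$). This equivalence passes to the viscosity subsolution inequalities, because both $H$ and $H_0$ have the same $\{p:\cdot\le 0\}$-sublevel set, so the classical inequality for any $C^2$ test function gradient is the same.

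For the ``$\Leftarrow$'' direction, assume \eqref{ellLip} and let $u-\phi$ have a local maximum at $x_0$ with $\phi\in C^2$. For a unit vector $w$ and small $t>0$, applying \eqref{ellLip} to the pair $(x_0,x_0-tw)$ gives $u(x_0-tw)-u(x_0)\ge -t\ell(w)$, while the local max gives $u(x_0-tw)-u(x_0)\le \phi(x_0-tw)-\phi(x_0)$. Combining these, dividing by $-t$, and letting $t\to 0^+$ yields $D\phi(x_0)\cdot w\le \ell(w)$ for every $w$, hence $H_0(D\phi(x_0))\le 0$ and therefore $H(D\phi(x_0))\le 0$.

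For the ``$\Rightarrow$'' direction, the key observation is that \eqref{UpperLowerH} gives $H_0(p)\ge |p|-c_1$, so any viscosity subsolution of $H_0(Du)\le 0$ is automatically globally $c_1$-Lipschitz (a standard consequence of the eikonal-type bound). By Rademacher's theorem $Du$ then exists almost everywhere, and at every point of differentiability the viscosity inequality holds classically, so $Du(x)\in K:=\{p:H(p)\le 0\}$ for a.e.\ $x$. I would then mollify: set $u_\epsilon:=u*\rho_\epsilon$, so that $u_\epsilon\to u$ locally uniformly and $Du_\epsilon(z)=(Du*\rho_\epsilon)(z)$. Because $K$ is convex, $Du_\epsilon(z)\in K$ for every $z$, i.e.\ $Du_\epsilon(z)\cdot v\le \ell(v)$ for every $z,v$. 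Integrating along the segment from $y$ to $x$,
\[
u_\epsilon(x)-u_\epsilon(y)=\int_0^1 Du_\epsilon(y+t(x-y))\cdot(x-y)\,dt \le \ell(x-y),
\]
and letting $\epsilon\to 0$ yields \eqref{ellLip}.

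The main obstacle is the soft regularity step: converting the viscosity inequality into an a.e.\ pointwise inclusion $Du\in K$. It is crucial here that $H_0$ controls $|Du|$ from above (so that the solution is Lipschitz and Rademacher applies) and that $K$ is convex (so that mollification preserves the constraint); both are consequences of the standing hypothesis \eqref{Hassump} on $H$.
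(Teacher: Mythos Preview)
Your proposal is correct and follows essentially the same route as the paper: for the forward direction you use the eikonal bound to get Lipschitz regularity, then Rademacher and mollification (the paper phrases the convexity step as Jensen's inequality for $H_0$, you phrase it as $Du_\epsilon\in K$, which is the same thing), then integrate along a segment; for the converse you run the same directional difference-quotient argument at a test point. The only cosmetic difference is that the paper works with supergradients rather than $C^2$ test functions in the $\Leftarrow$ direction.
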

\begin{proof}
Assume \eqref{HSubsoln}.  Then $u$ is Lipschitz by \eqref{UpperLowerH}, and $H(Du(x))\le 0$ for almost every $x\in \R^n$.  Let $u^\epsilon:=\eta^\epsilon*u$ be a standard mollification of $u$. That is, $\eta\in C^\infty_c(\R^n)$
is a nonnegative, radial function supported in $B_1(0)$ that satisfies $\int_{\R^n}\eta(z)dz=1$ and $\eta^\epsilon:=\epsilon^{-n}\eta(\cdot/\epsilon)$. It is readily verified that $u^\epsilon\in C^\infty(\R^n)$ and $u^\epsilon$ converges to $u$ uniformly as $\epsilon$ tends to 0; see Appendix C.5 of \cite{Evans2} 
for more on mollification.  As $H_0$ is convex, we have by Jensen's inequality 
$$
H_0(Du^\epsilon)=H_0\left(D(\eta^\epsilon*u)\right)=H_0\left(\eta^\epsilon*Du\right)\le \eta^\epsilon*H_0(Du)\le 0.
$$
It follows that for any $x,y\in \R^n$
$$
u^\epsilon(x)-u^\epsilon(y)=\int^1_0Du^\epsilon(y+t(x-y))\cdot (x-y)dt\le \ell(x-y). 
$$
Sending $\epsilon\rightarrow 0^+$ gives \eqref{ellLip}.

\par For the converse, suppose there is $p\in \R^n$ such that
$$
u(x)\le u(x_0)+p\cdot(x-x_0) + o(|x-x_0|)
$$
as $x\rightarrow x_0$.  Substituting  $x=x_0 - tv$ for $t>0$ and $|v|=1$ above gives
$$
u(x_0)- t\ell(v)\le u(x_0-tv)\le u(x_0) -t p\cdot v+o(t).
$$
As a result $p\cdot v\le \ell(v)$. As $v$ was arbitrary, $H(p)\le 0$.
\end{proof}
\begin{cor}
The function $\ell$ satisfies \eqref{HSubsoln}. Moreover, at any $x\in \R^n$ for which $\ell$ is differentiable 
$$
\ell(x)=D\ell(x)\cdot x\quad  \text{and} \quad H(D\ell(x))=0.
$$
\end{cor}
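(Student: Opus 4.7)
The plan is to deduce both claims essentially from the definition of $\ell$ as the support function of $K := \{p \in \R^n : H(p) \le 0\}$, using the characterization in Lemma \ref{HLipLem} for the first part and Euler-type reasoning for the second.

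For the claim that $\ell$ satisfies \eqref{HSubsoln}, I would invoke Lemma \ref{HLipLem} in reverse: it suffices to verify the Lipschitz-type inequality $\ell(x)-\ell(y)\le\ell(x-y)$. This is immediate from subadditivity of the support function, since $\ell(x)=\ell((x-y)+y)\le\ell(x-y)+\ell(y)$. So the lemma hands us $H(D\ell)\le 0$ in the viscosity sense at once.

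For a point $x$ of differentiability, the identity $\ell(x)=D\ell(x)\cdot x$ is Euler's relation for positively homogeneous functions: differentiating $\ell(tx)=t\ell(x)$ in $t$ at $t=1$ gives exactly this. To upgrade $H(D\ell(x))\le 0$ to equality, I would argue as follows. Since $K$ is compact, the supremum defining $\ell(x)$ is attained at some $p^*\in K$; the set of such maximizers is precisely the convex subdifferential $\partial\ell(x)$, which at a differentiability point reduces to the singleton $\{D\ell(x)\}$. Thus $D\ell(x)\in K$ and $(D\ell(x))\cdot x=\ell(x)$. If $H(D\ell(x))<0$, then $D\ell(x)$ lies in the interior of $K$, so for any $v\in\R^n$ and all sufficiently small $t>0$ the perturbation $D\ell(x)+tv$ still belongs to $K$; the maximality of $D\ell(x)\cdot x$ then forces $tv\cdot x\le 0$ for every $v$, hence $x=0$.

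The only subtlety, and the step I would be most careful about, is the case $x=0$: I would point out that the lower bound $\ell(v)\ge c_0|v|$ from \eqref{UpperLowerell} makes $\ell$ behave like a norm near the origin, so $\ell$ is not differentiable at $0$; thus any differentiability point automatically satisfies $x\ne 0$, and the contradiction above yields $H(D\ell(x))=0$. No further ingredients beyond Lemma \ref{HLipLem}, the assumptions \eqref{Hassump}, and the bound \eqref{UpperLowerell} should be needed.
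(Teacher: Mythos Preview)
Your argument is correct and tracks the paper's proof closely: both use subadditivity of $\ell$ together with Lemma~\ref{HLipLem} for the first claim, identify $D\ell(x)$ with the maximizer in the definition of the support function, and rule out $x=0$ via \eqref{UpperLowerell} (differentiability at the origin would force $\ell$ to be linear). The only tactical difference is in establishing $H(D\ell(x))\ge 0$ for $x\neq 0$: you argue by contradiction that an interior maximizer of $p\mapsto p\cdot x$ over $K$ forces $x=0$, while the paper reads it off directly from the definition of $H_0$, namely $H_0(D\ell(x))\ge D\ell(x)\cdot\tfrac{x}{|x|}-\ell\!\left(\tfrac{x}{|x|}\right)=0$, using Euler's relation. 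Both routes are equally valid and of comparable length.
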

\begin{proof}
As $\ell$ is convex and positively homogeneous, it is sublinear. Therefore, $\ell(x)\le \ell(y)+\ell(x-y)$ for each $x,y\in \R^n$. By the previous lemma, $\ell$ satisfies \eqref{HSubsoln}.  
Now suppose
that $\ell$ is differentiable at $x$, and choose $\xi$ such that $H(\xi)\le 0$ and $\ell(x)=\xi\cdot x$. Then, as $y\rightarrow x$
\begin{align*}
\xi\cdot y& \le\ell(y) \\
& =\ell(x)+ D\ell(x)\cdot (y-x)+o(|y-x|)\\
& = \xi\cdot x+D\ell(x)\cdot (y-x)+o(|y-x|).
\end{align*}
Choosing $y=x+tv$, for $t>0$ and $v\in \R^n$ gives $\xi\cdot v\le D\ell(x)\cdot v + o(1)$ as $t\rightarrow 0^+$.  Thus, $\xi=D\ell(x)$ and $H(D\ell(x))\le 0$. If $x\neq 0$,
$$
H_0(D\ell(x))\ge D\ell(x)\cdot \frac{x}{|x|} -\ell\left(\frac{x}{|x|}\right)=\frac{D\ell(x)\cdot x-\ell(x)}{|x|}=0
$$
and so $H(D\ell(x))=0$. Conversely, if $x=0$, then $\ell$ is linear since it is positively homogeneous. However, this would contradict \eqref{UpperLowerell}. 
\end{proof}
The following assertion is a comparison principle for eigenvalues that makes use of the growth condition \eqref{ellgrowth}. 

\begin{prop}\label{lamCompProp}
Assume $u\in USC(\R^n)$ is a subsolution of \eqref{EigProb} with eigenvalue $\lambda$ and $v\in LSC(\R^n)$ is a 
supersolution of \eqref{EigProb} with eigenvalue $\mu$. If
\begin{equation}\label{growthComp}
\limsup_{|x|\rightarrow\infty}\frac{u(x)}{\ell(x)}\le 1\le \liminf_{|x|\rightarrow\infty}\frac{v(x)}{\ell(x)},
\end{equation}
the $\lambda\le \mu$. 
\end{prop}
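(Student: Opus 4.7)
The plan is to argue by contradiction, so suppose $\lambda > \mu$. The argument combines the standard doubling-variables technique with a crucial perturbation of the subsolution that produces a \emph{strict} gradient constraint.

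The first step is to set $u_\tau := (1-\tau)u$ for small $\tau \in (0,1)$. Using the positive $1$-homogeneity of $F$ and the fact (noted earlier) that $H$ may be replaced by the convex function $H_0$, one checks directly that $u_\tau$ is a viscosity subsolution of
\[
\max\{(1-\tau)\lambda + F(D^2 u_\tau) - (1-\tau) f(x),\, H_0(D u_\tau)\} \le 0.
\]
Since $H_0$ is convex and $H_0(0)<0$, whenever $H_0(q) \le 0$ one has $H_0((1-\tau)q) \le (1-\tau) H_0(q) + \tau H_0(0) \le -\delta_\tau$, where $\delta_\tau := -\tau H_0(0) > 0$. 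Hence at every test point from above, $u_\tau$ satisfies $H_0(D\varphi) \le -\delta_\tau$ strictly; this quantitative strictness is what will ultimately rule out the degenerate case.

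The growth hypothesis \eqref{growthComp} implies that, for any $\eta>0$, $u(x) \le (1+\eta)\ell(x) + C_\eta$ and $v(x) \ge (1-\eta)\ell(x) - C_\eta$ for all $x$. This yields
\[
u_\tau(x) - v(x) \le \big[2\eta - \tau(1+\eta)\big]\ell(x) + C_\eta',
\]
so choosing $\eta = \tau/3$ makes the bracket negative, whence $u_\tau - v \to -\infty$ as $|x|\to\infty$ (using $\ell(x)\ge c_0 |x|$). Consequently the classical doubling functional
\[
\Phi_\epsilon(x,y) := u_\tau(x) - v(y) - \frac{|x-y|^2}{2\epsilon}
\]
is upper semicontinuous and tends to $-\infty$ at infinity; it attains its supremum at some $(\hat x_\epsilon, \hat y_\epsilon)$ bounded uniformly in $\epsilon$, with $|\hat x_\epsilon - \hat y_\epsilon| \to 0$ and $|\hat x_\epsilon - \hat y_\epsilon|^2/\epsilon \to 0$ as $\epsilon\to 0^+$.

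The Crandall--Ishii lemma then produces matrices $X,Y\in\Sn$ with $X \le Y$ and a common vector $p_\epsilon := (\hat x_\epsilon - \hat y_\epsilon)/\epsilon$ belonging to the appropriate second-order super/subjets of $u_\tau$ at $\hat x_\epsilon$ and of $v$ at $\hat y_\epsilon$. The subsolution property for $u_\tau$ yields $(1-\tau)\lambda + F(X) \le (1-\tau) f(\hat x_\epsilon)$ together with $H_0(p_\epsilon) \le -\delta_\tau < 0$. The strict negativity of $H_0(p_\epsilon)$ rules out the gradient-constraint alternative in the supersolution inequality for $v$, forcing $\mu + F(Y) \ge f(\hat y_\epsilon)$. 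Subtracting and using degenerate ellipticity of $F$ (so $X \le Y$ implies $F(X) \ge F(Y)$) gives
\[
(1-\tau)\lambda - \mu \le (1-\tau) f(\hat x_\epsilon) - f(\hat y_\epsilon).
\]
Sending $\epsilon \to 0$ along a subsequence with $\hat x_\epsilon, \hat y_\epsilon \to x^*$ and invoking continuity of $f$ reduces this to $\lambda - \mu \le \tau(\lambda - f(x^*))$. Since $f$ is convex and superlinear, $\inf_{\R^n} f > -\infty$, so the right side is bounded by $\tau(\lambda - \inf f)$; letting $\tau \to 0$ yields $\lambda \le \mu$, contradicting $\lambda > \mu$.

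The main obstacle is handling the gradient-constraint half of the supersolution inequality: without additional information one cannot compare $F(X)$ and $F(Y)$ in the case $H_0(p_\epsilon)\ge 0$. The device $u_\tau = (1-\tau)u$ sidesteps this precisely by yielding a strict gradient constraint of quantitative size $\delta_\tau$ for $u_\tau$, while the homogeneity of $F$ only incurs the harmless factor $(1-\tau)$ on the eigenvalue and $f$, which is washed out by sending $\tau\to 0$.
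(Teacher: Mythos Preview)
Your proof is correct and follows essentially the same approach as the paper's: scale the subsolution by a factor strictly less than one (you use $(1-\tau)u$, the paper uses $\tau u$ with $\tau\in(0,1)$), exploit the positive homogeneity of $F$ and the convexity of $H_0$ with $H_0(0)<0$ to force the gradient constraint to be \emph{strictly} negative at the doubled-variable maximum, apply the Crandall--Ishii lemma, and pass to the limit in the scaling parameter. The only cosmetic difference is that your contradiction framing is unnecessary---the argument directly yields $\lambda\le\mu$---and you could be slightly more explicit about the uniform-in-$\epsilon$ boundedness of $(\hat x_\epsilon,\hat y_\epsilon)$, which follows from $\Phi_\epsilon(\hat x_\epsilon,\hat y_\epsilon)\ge\Phi_\epsilon(0,0)$ combined with the coercivity estimate you already established.
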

\begin{rem}
Any subsolution $u$ of \eqref{EigProb} satisfies $H(Du)\le 0$. By Lemma \ref{HLipLem}, $u$ 
then satisfies \eqref{ellLip} and therefore the first inequality in \eqref{growthComp} automatically holds. 
We have included both inequalities in \eqref{growthComp} simply for aesthetic purposes, and we continue this practice throughout this paper. 
\end{rem}

\begin{proof} For $\tau\in (0,1)$ and $\eta>0$, set 
$$
w^{\tau}(x,y):=\tau u(x) - v(y), \quad \varphi^\eta(x,y):=\frac{1}{2\eta}|x-y|^2
$$
$x,y\in \R^n$.   Observe 
\begin{align}\label{wminusphi}
(w^{\tau}-\varphi^\eta)(x,y)&=\tau(u(x)-u(y))+\tau u(y)-v(y)-\frac{1}{2\eta}|x-y|^2 \nonumber \\
&\le \tau\ell(x-y)+\tau u(y)-v(y)-\frac{1}{2\eta}|x-y|^2 \nonumber \\
&\le \tau c_1|x-y|+\tau u(y)-v(y)-\frac{1}{2\eta}|x-y|^2 \nonumber \\
&\le \eta \tau^2 c_1^2+\tau u(y)-v(y)-\frac{1}{4\eta}|x-y|^2.
\end{align}
In view of \eqref{growthComp}, $\lim_{|y|\rightarrow \infty}(\tau u(y)-v(y))=-\infty$ and so
$$
\lim_{|x|+|y|\rightarrow \infty}(w^{\tau}-\varphi^\eta)(x,y)=-\infty.
$$
As a result, there is $(x_\eta,y_\eta)$ maximizing $w^{\tau}-\varphi^\eta$. 

\par By Theorem 3.2 in \cite{CIL}, for each $\rho>0$ there are $X,Y\in \Sn$ with $X\le Y$ such that 
$$
\left(\frac{x_\eta-y_\eta}{\eta},X\right)\in \overline{J}^{2,+}(\tau u)(x_\eta)
$$
and 
$$
\left(\frac{x_\eta-y_\eta}{\eta},Y\right)\in \overline{J}^{2,-}v(y_\eta).
$$
Note that 
\begin{align*}
H_0\left(\frac{x_\eta-y_\eta}{\eta}\right)&=H_0\left(\tau\frac{x_\eta-y_\eta}{\tau\eta}+(1-\tau)0\right)\\
&\le \tau H_0\left(\frac{x_\eta-y_\eta}{\tau\eta}\right)+(1-\tau)H_0(0)\\
&\le (1-\tau)H_0(0)\\
&<0.
\end{align*}
As $v$ is a supersolution of \eqref{EigProb}, 
$$
\mu + F(Y)-f(y_\eta)\ge 0.
$$
Since $F$ is elliptic and positively homogeneous, 
\begin{align}\label{ComparisonIneq}
\tau \lambda -\mu&\le -\tau F\left(\frac{X}{\tau}\right)+F(Y)+\tau f(x_\eta)-f(y_\eta) \nonumber\\
&= - F\left(X\right)+F(Y)+\tau f(x_\eta)-f(y_\eta)\nonumber \\
&\le \tau f(x_\eta)-f(y_\eta)\nonumber \\
&=  f(x_\eta)-f(y_\eta) + (\tau-1)f(x_\eta)\nonumber \\
&\le  f(x_\eta)-f(y_\eta) +(\tau-1)\inf_{\R^n}f.
\end{align}
\par We now claim that $(y_\eta)_{\eta>0}\subset\R^n$ is bounded. To see this, recall inequality \eqref{wminusphi}. If 
there is a sequence $\eta_k\rightarrow 0$ as $k\rightarrow \infty$ for which $|y_{\eta_k}|$ is unbounded, then $(w^\tau-\varphi^{\eta_k})(x_{\eta_k},y_{\eta_k})$ tends to $-\infty$ as $k\rightarrow\infty$. However, 
\begin{align*}
(w^\tau-\varphi^{\eta_k})(x_{\eta_k},y_{\eta_k})&=\sup_{\R^n\times\R^n}(w^\tau-\varphi^{\eta_k})\\
&\ge (w^\tau-\varphi^\eta)(0,0)\\
&=\tau u(0)-v(0).
\end{align*}
Thus, $(y_\eta)_{\eta>0}$ and similarly $(x_\eta)_{\eta>0}$ is bounded. It then follows from Lemma 3.1 in \cite{CIL} that 
$$
\lim_{\eta\rightarrow 0^+}\frac{|x_\eta-y_\eta|^2}{2\eta}=0
$$
and $(x_\eta,y_\eta)_{\eta>0}\subset\R^n\times\R^n$ has a cluster point $(x_\tau,x_\tau)$. Passing to the limit along an appropriate sequence $\eta$ tending to $0$ in \eqref{ComparisonIneq} then gives
\begin{equation}\label{ComparisonIneq2}
\tau \lambda-\mu\le (\tau-1)\inf_{\R^n}f.
\end{equation}
We conclude after sending $\tau\rightarrow 1^-$. 
\end{proof}
\begin{cor}
There can be at most one $\lambda\in \R$ for which \eqref{EigProb} has a solution $u$ satisfying \eqref{ellgrowth}. 
\end{cor}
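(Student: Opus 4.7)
The plan is to deduce this corollary directly from Proposition \ref{lamCompProp} by a standard symmetry argument. Suppose, for contradiction or just for the uniqueness argument, that $\lambda_1$ and $\lambda_2$ are both eigenvalues, with corresponding continuous viscosity solutions $u_1$ and $u_2$ of \eqref{EigProb} satisfying the growth condition \eqref{ellgrowth}. The key observation is that any viscosity solution is simultaneously a subsolution and a supersolution, and the growth condition \eqref{ellgrowth} forces both
$$
\limsup_{|x|\to\infty}\frac{u_i(x)}{\ell(x)}\le 1\le \liminf_{|x|\to\infty}\frac{u_i(x)}{\ell(x)}
$$
for $i=1,2$, so the hypothesis \eqref{growthComp} of the proposition is met in either direction.

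First I would apply Proposition \ref{lamCompProp} with $u:=u_1$ taken as a subsolution with eigenvalue $\lambda_1$ and $v:=u_2$ taken as a supersolution with eigenvalue $\lambda_2$. Since $u_1\in C(\R^n)\subset USC(\R^n)$, $u_2\in C(\R^n)\subset LSC(\R^n)$, and the growth condition provides both sides of \eqref{growthComp}, the proposition yields $\lambda_1\le\lambda_2$. Then I would simply swap the roles: use $u_2$ as the subsolution (eigenvalue $\lambda_2$) and $u_1$ as the supersolution (eigenvalue $\lambda_1$), obtaining $\lambda_2\le\lambda_1$. Combining the two inequalities gives $\lambda_1=\lambda_2$, proving uniqueness of the eigenvalue.

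There is no real obstacle here; the work has been done in Proposition \ref{lamCompProp}. The only thing worth flagging in the write-up is the remark already made in the paper, namely that the subsolution inequality in \eqref{growthComp} is automatic from Lemma \ref{HLipLem} (since any subsolution satisfies $H(Du)\le 0$ and hence the $\ell$-Lipschitz bound \eqref{ellLip}), so one really only needs the $\liminf$ half of \eqref{ellgrowth} from the supersolution side, which is of course present. Thus the corollary reduces to two symmetric invocations of the comparison principle.
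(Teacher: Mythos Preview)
Your proposal is correct and is exactly the intended argument: the corollary follows immediately from two symmetric applications of Proposition \ref{lamCompProp}, using that a solution is both a sub- and supersolution and that \eqref{ellgrowth} supplies both inequalities in \eqref{growthComp}. The paper leaves the proof implicit for precisely this reason.
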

\par We are uncertain whether or not eigenfunctions $u$ are uniquely defined up to an additive constant.  However, we do know that if $F$ is not uniformly elliptic and $f$ is not strictly convex, eigenfunctions are not necessarily unique. For instance when $F\equiv 0$ and $H(p)=|p|-1$, equation \eqref{EigProb} reduces to
\begin{equation}\label{FzeroEqn}
\max\{\lambda-f,|Du|-1\}=0, \quad \R^n.
\end{equation}
It is easily verified that $\lambda^*=\inf_{\R^n}f$ and $u(x)=|x-x_0|$
is a solution of \eqref{FzeroEqn} for each $x_0$ such that $\inf_{\R^n}f=f(x_0)$. Notice that if there is another point $y_0\neq x_0$ where 
$f$ attains its minimum, then $u(x)=|x-y_0|$ is another solution.  

\par We will give some conditions in Proposition \ref{UniqunessN1} below that guarantee uniqueness when $n=1$.  However, we postpone this discussion until after we have considered the regularity of solutions of \eqref{EigProb}. We conclude this section by giving a few examples with explicit solutions.  
\begin{ex}
Assume $n=1$, and consider the eigenvalue problem
$$
\begin{cases}
\max\{\lambda - u'' -x^2, |u'|-1\}=0, \quad x\in \R\\
\lim_{|x|\rightarrow \infty}\frac{u(x)}{|x|}=1
\end{cases}.
$$
Direct computation gives the explicit eigenvalue 
$$
\lambda^*=(2/3)^{2/3}
$$
with a corresponding eigenfunction
\begin{align*}
u^*(x)&=\inf_{|y|<(\lambda^*)^{1/2}}\left\{\frac{\lambda^*}{2}y^2-\frac{1}{12}y^4 +|x-y|\right\}\\
         &= 
        \begin{cases}
        \frac{\lambda^*}{2}x^2-\frac{1}{12}x^4, \quad |x|<(\lambda^*)^{1/2}\\
        \frac{\lambda^*}{2}[(\lambda^*)^{1/2}]^2-\frac{1}{12}[(\lambda^*)^{1/2}]^4 +(x-(\lambda^*)^{1/2}),\quad x\ge (\lambda^*)^{1/2}  \\ 
        \frac{\lambda^*}{2}[(\lambda^*)^{1/2}]^2-\frac{1}{12}[(\lambda^*)^{1/2}]^4 -(x+(\lambda^*)^{1/2}),\quad x\le -(\lambda^*)^{1/2}   \\ 
         \end{cases}.
\end{align*}
One checks additionally that $u^*\in C^2(\R)$.  In fact, searching for a solution that is twice continuously differentiable lead us to the particular value of $\lambda^*$. 
\end{ex}
\begin{ex}
The problem in the previous example can be generalized to any dimension $n\in \N$
\begin{equation}\label{SepVarProb}
\begin{cases}
\max\left\{\lambda - \Delta u -|x|^2, \max_{1\le i\le n}|u_{x_i}|-1\right\}=0, \quad x\in \R^n\\
\lim_{|x|\rightarrow \infty}u(x)/\sum^n_{i=1}|x_i|=1
\end{cases}.
\end{equation}
Note that this problem corresponds to \eqref{EigProb} when $F(M)=-\tr M$, $f(x)=|x|^2$ and $H(p)=\max_{1\le i\le n}|p_i|-1$. In this case, $\ell(v)=\sum^{n}_{i=1}|v_i|$.  Now assume $(\lambda_1, u_1)$ is a solution of the eigenvalue problem 
in the previous example. Then $\lambda^*=n\lambda_1$ and
$$
u^*(x)=\sum^{n}_{i=1}u_1(x_i)
$$
is a solution of the eigenvalue problem \eqref{SepVarProb} with $\lambda=\lambda^*$. Moreover, $u^*\in C^2(\R^n)$. 
\end{ex}

%%%%%%%%%%%%%%%%%%%%%%%%%%%%%%%%%% Existence %%%%%%%%%%%%%%%%%%%%%%%%%%%%%%%%%
\section{Existence of an eigenvalue}\label{ExistSec}
In order to prove the existence of an eigenvalue, we will study solutions of the following PDE for $\delta>0$. 
\begin{equation}\label{deltaProb}
\max\left\{\delta u + F(D^2u) -f(x),H(Du)\right\}=0, \quad x\in \R^n.
\end{equation}
In particular, we will follow section 3 our previous work \cite{Hynd}, which was inspired by the approach of J. Menaldi, M. Robin and M. Taksar \cite{Menaldi}.  Employing the same techniques used to verify Proposition \ref{lamCompProp} above, we can establish the following assertion. 
\begin{prop}\label{DeltaCompProp}
Assume $\delta>0$, $u\in USC(\R^n)$ is a subsolution of \eqref{deltaProb} and $v\in LSC(\R^n)$ is a 
supersolution of \eqref{deltaProb}. If $u$ and $v$ satisfy \eqref{growthComp}, then $u\le v$. 
\end{prop}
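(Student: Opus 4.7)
The plan is to adapt the proof of Proposition \ref{lamCompProp}, with the comparison of constants $\tau\lambda-\mu$ replaced by a pointwise comparison of the values $\delta[\tau u(x_\eta) - v(y_\eta)]$, which the coercive zeroth-order term $\delta u$ makes possible. Fix $\tau \in (0,1)$ and $\eta > 0$, and set $w^\tau(x,y) := \tau u(x) - v(y)$ and $\varphi^\eta(x,y) := |x-y|^2/(2\eta)$. The computation \eqref{wminusphi} combined with \eqref{growthComp} again yields a maximizer $(x_\eta, y_\eta)$ of $w^\tau - \varphi^\eta$, and the same argument using $\tau u(0) - v(0)$ as a lower bound on $\sup(w^\tau - \varphi^\eta)$ shows that $\{(x_\eta, y_\eta)\}_{\eta>0}$ stays bounded as $\eta \to 0^+$.

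Next, by Theorem 3.2 of \cite{CIL} I extract matrices $X \le Y$ in $\Sn$ with $((x_\eta - y_\eta)/\eta, X) \in \overline{J}^{2,+}(\tau u)(x_\eta)$ and $((x_\eta - y_\eta)/\eta, Y) \in \overline{J}^{2,-} v(y_\eta)$. The same convexity-of-$H_0$ computation as in Proposition \ref{lamCompProp} produces $H_0((x_\eta - y_\eta)/\eta) \le (1-\tau) H_0(0) < 0$, which (via the equivalence of the sublevel sets $\{H\le 0\}$ and $\{H_0\le 0\}$) rules out the gradient clause in the supersolution inequality at $y_\eta$. The sub/supersolution properties thus reduce to
$$\delta u(x_\eta) + F(X/\tau) \le f(x_\eta) \quad \text{and} \quad \delta v(y_\eta) + F(Y) \ge f(y_\eta).$$
Multiplying the first by $\tau$, subtracting the second, and using the positive homogeneity $\tau F(X/\tau) = F(X)$ together with $F(Y) \le F(X)$ (since $Y - X \ge 0$ and $F$ is nonincreasing in its argument by \eqref{Fassump}), I arrive at
$$\delta\bigl[\tau u(x_\eta) - v(y_\eta)\bigr] \le \tau f(x_\eta) - f(y_\eta).$$

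To close, fix any $x \in \R^n$. Testing the doubled supremum at the diagonal pair $(x,x)$ gives $\tau u(x) - v(x) \le \tau u(x_\eta) - v(y_\eta)$. Extracting a cluster point $(x_\tau, x_\tau)$ as $\eta \to 0^+$ through Lemma 3.1 of \cite{CIL}, and invoking continuity of $f$, I deduce
$$\delta\bigl[\tau u(x) - v(x)\bigr] \le (\tau - 1) f(x_\tau) \le (\tau - 1) \inf_{\R^n} f.$$
Letting $\tau \to 1^-$ then yields $u(x) \le v(x)$. No genuinely new obstacle appears: the only delicate step is the boundedness of $\{(x_\eta, y_\eta)\}$, which was already handled in Proposition \ref{lamCompProp}; the rest is structural bookkeeping that exploits positive homogeneity of $F$ and the strict inequality $H_0(0) < 0$ to neutralize the gradient constraint along the doubling scheme.
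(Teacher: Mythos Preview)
Your proof is correct and follows precisely the approach the paper intends: it adapts the doubling-of-variables argument of Proposition~\ref{lamCompProp}, replacing the eigenvalue difference $\tau\lambda-\mu$ by the zeroth-order quantity $\delta[\tau u(x_\eta)-v(y_\eta)]$, and otherwise reuses the same ingredients (positive homogeneity of $F$, convexity of $H_0$ to force $H_0(p_\eta)<0$, and the cluster-point argument via Lemma~3.1 of \cite{CIL}). The paper itself omits a separate proof, simply noting that the techniques of Proposition~\ref{lamCompProp} apply, which is exactly what you have written out.
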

It is now immediate that there can be at most one solution of \eqref{deltaProb} that satisfies 
the growth condition \eqref{ellgrowth}.  We will call this solution $u_\delta$. To verify that $u_\delta$ exists, we can appeal to Perron's method once we have appropriate sub and supersolutions. To this end, we first characterize the largest function $v$ that is less than a given function $g$ and satisfies $H(Dv)\le 0$. 
\begin{lem}\label{infConvLem}
Assume $g\in C(\R^n)$ is superlinear. The unique solution of the PDE 
\begin{equation}\label{DeterministicEq}
\max\{v-g,H(Dv)\}=0, \quad x\in \R^n
\end{equation}
that satisfies the growth condition \eqref{ellgrowth} is given by the inf-convolution of $g$ and $\ell$
\begin{equation}\label{vInfConv}
v(x):=\inf_{y\in\R^n}\left\{g(y)+\ell(x-y)\right\}
\end{equation}
\end{lem}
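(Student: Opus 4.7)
My plan is to establish the four properties in turn: (i) $v$ is finite and attained, (ii) $v$ is a subsolution, (iii) $v$ is a supersolution, (iv) $v$ satisfies the growth condition \eqref{ellgrowth}; then uniqueness follows from a comparison argument in the spirit of Proposition \ref{lamCompProp} (or from the fact that $v$ will turn out to be the largest subsolution).

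For (i), since $\ell(x-y) \ge c_0|x-y|$ by \eqref{UpperLowerell} and $g$ is superlinear, the map $y \mapsto g(y) + \ell(x-y)$ is coercive, so the infimum is attained; taking $y=x$ gives $v(x)\le g(x)$ since $\ell(0)=0$. For (ii), the sublinearity of the support function $\ell$ immediately yields $v(x)-v(z)\le \ell(x-z)$ for all $x,z$: given $\epsilon$, choose $y$ nearly optimal for $v(z)$ and estimate $v(x)\le g(y)+\ell(x-y)\le g(y)+\ell(z-y)+\ell(x-z)\le v(z)+\epsilon+\ell(x-z)$. Lemma \ref{HLipLem} then converts this to $H(Dv)\le 0$ in the viscosity sense, and combined with $v\le g$ we get $\max\{v-g,H(Dv)\}\le 0$.

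The main obstacle is (iii), the supersolution condition. Suppose $\phi\in C^2$ touches $v$ from below at $x_0$. If $v(x_0)=g(x_0)$ we are done. Otherwise the inf in \eqref{vInfConv} is attained at some $y_0\ne x_0$, and I would exploit that $v$ is affine along the ray through $y_0$ and $x_0$. Concretely, set $v_0:=(x_0-y_0)/|x_0-y_0|$; for $|s|$ small, $x_0+sv_0-y_0$ is a positive multiple of $v_0$, so by positive homogeneity of $\ell$
\begin{equation*}
v(x_0+sv_0)\le g(y_0)+\ell(x_0+sv_0-y_0)=v(x_0)+s\,\ell(v_0)
\end{equation*}
for both signs of $s$. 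Inserting $\phi\le v$ with equality at $x_0$ and Taylor expanding yields $s\,D\phi(x_0)\cdot v_0\le s\,\ell(v_0)+O(s^2)$ for $s$ of either sign, forcing the equality $D\phi(x_0)\cdot v_0=\ell(v_0)$. By the definition $H_0(p)=\max_{|v|=1}\{p\cdot v-\ell(v)\}$, this gives $H_0(D\phi(x_0))\ge 0$, and since $\{H\le 0\}=\{H_0\le 0\}$ implies the supersolution inequalities for $H$ and $H_0$ coincide, we get $H(D\phi(x_0))\ge 0$, as required.

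For (iv), the upper bound $v(x)\le g(0)+\ell(x)$ gives $\limsup v/\ell\le 1$ via \eqref{UpperLowerell}. For the lower bound, let $y_x$ attain the inf; since $g$ is bounded below (being convex and superlinear) and $g(y_x)\le v(x)\le g(0)+c_1|x|$, superlinearity of $g$ forces $|y_x|/|x|\to 0$ as $|x|\to\infty$. Using sublinearity $|\ell(x-y_x)-\ell(x)|\le \max\{\ell(y_x),\ell(-y_x)\}\le c_1|y_x|$ and $\ell(x)\ge c_0|x|$, we get $\ell(x-y_x)/\ell(x)\to 1$, hence $v(x)/\ell(x)\ge(\inf g)/\ell(x)+\ell(x-y_x)/\ell(x)\to 1$. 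Finally, for uniqueness: any solution $\tilde v$ satisfies $H(D\tilde v)\le 0$ and $\tilde v\le g$, so by Lemma \ref{HLipLem} $\tilde v(x)\le \tilde v(y)+\ell(x-y)\le g(y)+\ell(x-y)$, giving $\tilde v\le v$ pointwise; the reverse inequality follows from an adaptation of the doubling-variables comparison argument in Proposition \ref{lamCompProp}, using the growth hypothesis \eqref{ellgrowth} on both functions to localize the supremum of $\tau\tilde v-v-\varphi^\eta$.
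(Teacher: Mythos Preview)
Your proof is correct, with one minor slip: in (iv) you write ``$g$ is bounded below (being convex and superlinear)'', but the lemma does not assume $g$ convex---only continuous and superlinear. Boundedness below still follows (continuity plus superlinearity forces $g(x)\ge 0$ outside a compact set), so the argument survives. Your lower-growth estimate is also more laborious than needed: the paper simply uses the sublinearity inequality $\ell(x-y)\ge\ell(x)-\ell(y)$ to obtain $v(x)\ge\inf_{y}\{g(y)-\ell(y)\}+\ell(x)$ in one line, and the infimum is finite by superlinearity of $g$.

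The genuine methodological difference is in step (iii). The paper does not verify the supersolution property directly; instead it observes that $v$ dominates every subsolution $\psi$ of \eqref{DeterministicEq} (since $\psi\le g$ and $\psi$ satisfies \eqref{ellLip}, so $\psi(x)\le\psi(y)+\ell(x-y)\le g(y)+\ell(x-y)$), and then invokes Perron's lemma (Lemma~4.4 of \cite{CIL}) to conclude that the maximal subsolution must be a supersolution. Your ray argument---exploiting that $v$ is affine along the segment from $x_0$ to its minimizer $y_0$, forcing $D\phi(x_0)\cdot v_0=\ell(v_0)$ and hence $H_0(D\phi(x_0))\ge 0$---is a valid, self-contained alternative that avoids the abstract Perron machinery, at the cost of relying on the specific inf-convolution structure with a positively homogeneous $\ell$. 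The passage from $H_0(D\phi(x_0))\ge 0$ to $H(D\phi(x_0))\ge 0$ deserves one more sentence of justification: $H_0(p)=0$ places $p$ on $\partial\{H\le 0\}$, where continuity of $H$ forces $H(p)=0$, while $H_0(p)>0$ means $p\notin\{H\le 0\}$, so $H(p)>0$. Uniqueness is handled the same way in both proofs, via the comparison principle of Proposition~\ref{DeltaCompProp}.
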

\begin{proof}
The uniqueness follows from Proposition \ref{DeltaCompProp}. In particular, this equation corresponds to \eqref{deltaProb} with $F\equiv 0$ and $\delta=1$.  Therefore, 
we only verify that $v$ given in \eqref{vInfConv} is a solution that satisfies the growth condition \eqref{ellgrowth}. Choosing $y=x$ gives, $v(x)\le g(x)$.  Also note $x\mapsto g(y) +\ell(x-y)$ satisfies \eqref{ellLip}, which implies that $v$ does as well. Hence, $v$ is a subsolution of \eqref{DeterministicEq}.
In particular, $\limsup_{|x|\rightarrow\infty}v(x)/\ell(x)\le 1$. Using $\ell(x-y)\ge \ell(x)-\ell(y)$,
$$
v(x)\ge \inf_{y\in\R^n}\left\{g(y)-\ell(y)\right\}+\ell(x).
$$
As $g$ is assumed superlinear, $\inf_{\R^n}\left\{g(y)-\ell(y)\right\}$ is finite. Thus, $\liminf_{|x|\rightarrow\infty}v(x)/\ell(x)\ge 1$.  

\par Finally, if $\psi$ is another subsolution of \eqref{DeterministicEq}
\begin{align*}
v(x)&=\inf_{y\in\R^n}\left\{g(y)+\ell(x-y)\right\}\\
&\ge\inf_{y\in\R^n}\left\{\psi(y)+\ell(x-y)\right\}\\
&\ge \psi(x).
\end{align*}
By Lemma 4.4 of \cite{CIL}, $v$ must be a supersolution of \eqref{DeterministicEq}.
\end{proof}
The solution of \eqref{DeterministicEq} when $g(x)=\frac{1}{2}|x|^2$ will be of particular interest to us and will help us construct a useful supersolution of PDE \eqref{deltaProb}. 

\begin{lem}\label{xsquaredLemma}
Let $g(x):=\frac{1}{2}|x|^2$ and $v$ the solution of \eqref{DeterministicEq} subject to the growth condition \eqref{ellgrowth}. Then
$$
v(x)=\frac{1}{2}|x|^2
$$
when $H(x)\le 0$, and  
$$
%v(x)<\frac{1}{2}|x|^2
H(Dv)=0
$$
in $\{x\in \R^n: H(x)>0\}$
\end{lem}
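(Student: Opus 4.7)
The plan is to use the explicit representation $v(x)=\inf_{y\in\R^n}\{\tfrac{1}{2}|y|^2+\ell(x-y)\}$ from Lemma \ref{infConvLem}, combined with the viscosity sub/supersolution structure inherited from equation \eqref{DeterministicEq}.

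For the first assertion, I would begin by noting that the choice $y=x$ in the infimum immediately gives $v(x)\le \tfrac{1}{2}|x|^2$, using $\ell(0)=0$. For the reverse inequality when $H(x)\le 0$, the key observation is that $x$ then lies in the compact convex set $K:=\{p:H(p)\le 0\}$, so by the very definition of the support function, $\ell(w)\ge x\cdot w$ for all $w\in\R^n$. Applying this with $w=x-y$ yields
$$
\tfrac{1}{2}|y|^2+\ell(x-y)\;\ge\;\tfrac{1}{2}|y|^2+x\cdot(x-y)\;=\;\tfrac{1}{2}|x-y|^2+\tfrac{1}{2}|x|^2\;\ge\;\tfrac{1}{2}|x|^2,
$$
for every $y$, so $v(x)\ge\tfrac{1}{2}|x|^2$, establishing equality.

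For the second assertion, I would work in the region $\Omega:=\{H(x)>0\}$ and argue in two steps. First, I claim $v(x_0)<g(x_0)$ for every $x_0\in\Omega$. Indeed, if instead $v(x_0)=g(x_0)$, then since $v\le g$ everywhere, the smooth function $g$ touches $v$ from above at $x_0$, so it is an admissible test function for the subsolution property. This forces $\max\{v(x_0)-g(x_0),H(Dg(x_0))\}\le 0$, i.e.\ $H(x_0)\le 0$, contradicting $x_0\in\Omega$. Hence $v<g$ on all of $\Omega$. Second, since $v$ is a supersolution of \eqref{DeterministicEq}, at any point of $\Omega$ the term $v-g$ is strictly negative, so by standard viscosity-solution reasoning (continuity allows one to drop the strictly negative term from the $\max$), we deduce that $H(Dv)\ge 0$ in $\Omega$ in the viscosity sense. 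Combined with the subsolution property $H(Dv)\le 0$ in $\R^n$ (which comes directly from the subsolution inequality for \eqref{DeterministicEq}), this yields $H(Dv)=0$ in $\Omega$.

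I do not expect significant obstacles. The only subtle point is justifying that $H(Dv)\ge 0$ holds in the viscosity sense on $\Omega$ given only that $\max\{v-g,H(Dv)\}\ge 0$ there; this is a standard manipulation using continuity of $v-g$ and the test-function definition, so the main verification is just to check one can choose small enough neighborhoods where $v-g<0$ before evaluating the supersolution inequality on a test function touching $v$ from below.
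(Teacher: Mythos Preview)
Your proof is correct. For the first assertion (equality on $\{H\le 0\}$), your argument is identical to the paper's: both use $\ell(w)\ge x\cdot w$ when $H(x)\le0$ and complete the square. For the second assertion, both you and the paper first establish $v<g$ on $\{H>0\}$ and then read off $H(Dv)=0$ from the equation; the only difference is in how the strict inequality $v<g$ is obtained. The paper does it by an explicit competitor: since $H(x)>0$ means $x\notin\{H\le0\}$, there is a unit vector $v_0$ with $x\cdot v_0>\ell(v_0)$, and plugging the choice $\epsilon v_0$ into the inf-convolution yields a value strictly below $\tfrac12|x|^2$ for small $\epsilon>0$. You instead use $g$ itself as a $C^2$ test function touching $v$ from above at a hypothetical contact point and invoke the subsolution inequality for \eqref{DeterministicEq} to force $H(Dg(x_0))=H(x_0)\le0$, a contradiction. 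Your route is a bit slicker and avoids any explicit estimate; the paper's computation is more hands-on and makes the geometric source of the strict inequality transparent. Either way the conclusion follows.
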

\begin{proof}
Recall that $H(x)\le 0$ implies $\ell(v)\ge x\cdot v$ for all $v\in \R^n$. Thus
\begin{align*}
v(x)&=\inf_{y\in\R^n}\left\{\frac{1}{2}|y|^2+\ell(x-y)\right\}\\
&\ge\inf_{y\in\R^n}\left\{\frac{1}{2}|y|^2+x\cdot (x-y)\right\}\\
&=\inf_{y\in\R^n}\left\{\frac{1}{2}|y-x|^2+\frac{1}{2}|x|^2\right\}\\
&=\frac{1}{2}|x|^2.
\end{align*}
As $v(x)\le \frac{1}{2}|x|^2$ for all $x$, the first claim follows. 

\par Now suppose that $H(x)>0$. Then there is a $v_0\in \R^n$ with $|v_0|=1$ such that $\ell(v_0)< x \cdot v_0$. Fix $\epsilon>0$ so small
that $\ell(v_0)< x\cdot v_0-\epsilon$. Then 
\begin{align*}
v(x)&=\inf_{y\in\R^n}\left\{\frac{1}{2}|y-x|^2+\ell(y)\right\}\\
&\le \frac{1}{2}\left|(\epsilon v_0)-x\right|^2+\ell(\epsilon v_0)\\
&=\frac{1}{2}|x|^2 +\frac{\epsilon^2}{2}|v_0|^2-(\epsilon v_0)\cdot x+\ell(\epsilon v_0)\\
&=\frac{1}{2}|x|^2 +\frac{\epsilon^2}{2}+\epsilon[- v_0\cdot x+\ell(v_0)] \\
&\le \frac{1}{2}|x|^2 +\frac{\epsilon^2}{2}-\epsilon^2\\
&<\frac{1}{2}|x|^2.
\end{align*}
Since $v$ satisfies \eqref{DeterministicEq}, the PDE $H(Du)=0$ holds on the open set $\{x\in \R^n: H(x)>0\}$.
\end{proof}
We are now ready to exhibit sub and supersolutions of \eqref{deltaProb} that are 
comparable to $\ell(x)$ for large values of $|x|$. 
\begin{lem} Let $\delta\in (0,1)$. There are constants $K_1, K_2\ge 0$ such that 
\begin{equation}\label{uUpper}
\overline{u}(x)= \frac{K_1}{\delta}+\inf_{y\in\R^n}\left\{\frac{1}{2}|y|^2+\ell(x-y)\right\}
\end{equation}
is a supersolution of \eqref{deltaProb} satisfying \eqref{ellgrowth} and 
\begin{equation}\label{uLower}
\underline{u}(x)=(\ell(x)-K_2)^++\inf_{\R^n}f
\end{equation}
is a subsolution of \eqref{deltaProb} satisfying \eqref{ellgrowth}.  
\end{lem}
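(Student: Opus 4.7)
The strategy is to build the supersolution on top of the inf-convolution function from Lemmas~\ref{infConvLem} and \ref{xsquaredLemma}, and to build the subsolution out of $\ell$ itself. Let $v(x):=\inf_{y\in\R^n}\{\tfrac12|y|^2+\ell(x-y)\}$, so that $v$ is the unique solution of $\max\{v-\tfrac12|x|^2,H(Dv)\}=0$ satisfying $v(x)/\ell(x)\to 1$; by Lemma~\ref{xsquaredLemma}, $v$ coincides with $\tfrac12|x|^2$ on $\{H\le 0\}$ and solves $H(Dv)=0$ on the open complement. The growth conditions for both $\overline u$ and $\underline u$ then follow immediately, since $K_1/\delta$ is a constant and $(\ell(x)-K_2)^+/\ell(x)\to 1$ as $|x|\to\infty$.

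To see that $\overline u=K_1/\delta+v$ is a supersolution of \eqref{deltaProb}, I argue by cases on $x_0\in\R^n$. When $H(x_0)>0$, $v$ satisfies $H(Dv)=0$ in a neighborhood of $x_0$, so in particular $H(D\overline u)\ge 0$ in the viscosity sense at $x_0$ and the max is $\ge 0$. When $H(x_0)\le 0$, take a $C^2$ test function $\varphi$ with $\overline u-\varphi$ attaining a local minimum at $x_0$. Since $v\le\tfrac12|x|^2$ globally with equality at $x_0$, the function $\tfrac12|x|^2-(\varphi-K_1/\delta)$ has a local minimum at $x_0$, forcing $D\varphi(x_0)=x_0$ and $D^2\varphi(x_0)\le I$. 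By the ellipticity in \eqref{Fassump}, $F(D^2\varphi(x_0))\ge F(I)$, so the first entry of the max is at least $K_1+\delta v(x_0)+F(I)-f(x_0)\ge K_1+F(I)-\sup_{\{H\le 0\}}f$, which is nonnegative once $K_1\ge\sup_{\{H\le 0\}}f-F(I)$. This supremum is finite by compactness of $\{H\le 0\}$.

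For the subsolution $\underline u=(\ell-K_2)^++\inf f$, the condition $H(D\underline u)\le 0$ is verified via Lemma~\ref{HLipLem}: it suffices to show $w(x)-w(y)\le\ell(x-y)$ for $w=(\ell-K_2)^+$, which follows from a short case analysis on whether $\ell(x),\ell(y)$ exceed $K_2$, together with sublinearity $\ell(x)\le\ell(y)+\ell(x-y)$. For the PDE inequality, observe that $\underline u$ is convex, so any $(p,X)\in J^{2,+}\underline u(x_0)$ must have $X\ge 0$; otherwise, testing the superjet expansion in an eigen-direction of a hypothetical negative eigenvalue would contradict midpoint convexity. Then $F(X)\le F(0)=0$ by ellipticity, and the condition reduces to $\delta\underline u(x_0)\le f(x_0)$ pointwise. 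When $\ell(x_0)\le K_2$ this is $\delta\inf f\le f(x_0)$ (which holds provided $\inf f\ge 0$, an assumption that may be imposed without loss of generality by translating $f$ and $u$, since \eqref{deltaProb} is invariant under such a shift); when $\ell(x_0)>K_2$, superlinearity of $f$ gives $\delta(\ell(x_0)-K_2+\inf f)\le f(x_0)$ once $K_2$ is chosen large enough relative to the rate at which $f/\ell\to\infty$.

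The main obstacle is that neither $v$ nor $\underline u$ is $C^2$, so both inequalities have to be checked in the viscosity sense. For $\overline u$, the crucial point is the coincidence $v(x_0)=\tfrac12|x_0|^2$ on $\{H\le 0\}$ from Lemma~\ref{xsquaredLemma}, which converts the supersolution test into the elementary extremum argument giving $D^2\varphi(x_0)\le I$. For $\underline u$, the crucial point is that convexity rigidly constrains every superjet to satisfy $X\ge 0$, which bypasses any need to analyze the distributional Hessian at kinks of $\ell$ or along the free boundary $\{\ell=K_2\}$.
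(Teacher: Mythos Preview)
Your approach is essentially identical to the paper's: the supersolution is handled via Lemma~\ref{xsquaredLemma} and the coincidence $v=\tfrac12|x|^2$ on $\{H\le 0\}$, and the subsolution via the Lipschitz/convexity of $(\ell-K_2)^+$; your viscosity--test-function treatment on $\{H(x_0)\le 0\}$ is in fact slightly more careful than the paper's (which works pointwise on $\{H<0\}$ and only invokes the test-function argument on $\partial\{H\le 0\}$).

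One remark on the point you flagged: you are right that the step $\delta\underline u\le f$ needs $\inf f\ge 0$, and the paper's own chain $\delta\underline u - f \le \underline u - f$ tacitly uses $\underline u\ge 0$, which fails when $\inf f<0$. Your translation fix is the correct idea, but note that it does not literally preserve the stated form of $\underline u$: shifting $f\mapsto f+c$ forces $u\mapsto u+c/\delta$, so the subsolution one recovers for the original equation is $(\ell-K_2)^+ + \inf f/\delta$ rather than $(\ell-K_2)^+ + \inf f$. This is harmless for everything downstream (Perron's method, the bounds \eqref{LamDelBounds}--\eqref{veeDelBounds}), but strictly speaking the lemma as written requires $\inf f\ge 0$.
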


\begin{proof}
1. Choose 
$$
K_1:= - F(I_n) + \sup_{H(x)\le 0}f(x).
$$
Lemma \ref{xsquaredLemma} implies $\underline{u}(x)=\frac{K_1}{\delta}+\frac{1}{2}|x|^2$ when $H(x)\le 0$. Thus,
$$
\delta \underline{u}+F(D^2u)-f\ge K_1+F(I_n)-f\ge 0
$$
on $\{x\in\R^n: H(x)< 0\}$. 

\par We also have by Lemma \ref{xsquaredLemma} that $H(D\underline{u})=0$ on $\{x\in\R^n: H(x)> 0\}$. We will now verify that $H(D\underline{u}(x_0))=0$ when $H(x_0)=0$. 
To this end, suppose that 
$$
\underline{u}(x_0)+p\cdot (x-x_0)+o(|x-x_0|) \le \underline{u}(x)
$$
as $x\rightarrow x_0$. Using $\underline{u}(x_0)=\frac{K_1}{\delta}+\frac{1}{2}|x_0|^2$ and $\underline{u}(x)\le \frac{K_1}{\delta}+\frac{1}{2}|x|^2$ with the above inequality gives 
$$
\frac{1}{2}|x_0|^2+p\cdot (x-x_0)+o(|x-x_0|) \le \frac{1}{2}|x|^2,
$$
as $x\rightarrow x_0$. It follows that $p=x_0$, and so $H(p)=H(x_0)=0$.

\par 2. Choose $K_2\ge 0$ so large that 
$$
(\ell(x)-K_2)^+\le f(x)-\inf_{\R^n}f, \quad x\in \R^n. 
$$
Such a $K_2$ exists by the assumption that $f$ is superlinear and \eqref{UpperLowerell}. Observe $\underline{u}$ defined in \eqref{uLower} satisfies \eqref{ellLip}; thus 
$H(D\underline{u})\le 0$. And as $\ell$ is convex, $\underline{u}$ is convex. Therefore, $F(D^2\underline{u})\le 0$ and
$$
\delta\underline{u}+F(D^2\underline{u})-f\le \delta\underline{u}- f\le (\ell-K_2)^++\inf_{\R^n}f-f\le 0, \quad x\in \R^n
$$
for $\delta\le 1$.
\end{proof}
% Convexity property
A key property of $u_\delta$ is that it is a convex function. This is critical to the arguments to follow. We also remark that our proof of this fact below was inspired 
by Korevaar's work \cite{KO} and is an adaption of Lemma 3.7 in \cite{Hynd}. The new feature we verify here is that the assumption that $F$ is superadditive still 
produces a convex solution. 

\begin{prop}\label{UdelConvex}
The function $u_\delta$ is convex. 
\end{prop}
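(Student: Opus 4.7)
The plan is to adapt Korevaar's concavity maximum principle, following Lemma~3.7 of~\cite{Hynd}; the novelty over the linear case treated there is that the linearity of $-\Delta$ gets replaced by the concavity of $F$, which follows from positive homogeneity combined with superadditivity: for $\lambda\in[0,1]$,
$$F(\lambda M+(1-\lambda)N)\ge F(\lambda M)+F((1-\lambda)N)=\lambda F(M)+(1-\lambda)F(N).$$
Argue by contradiction: suppose $u_\delta$ is not convex, so that
$$\mathcal{M}:=\sup_{x,y\in\R^n}\left\{u_\delta\!\left(\tfrac{x+y}{2}\right)-\tfrac{1}{2}\bigl(u_\delta(x)+u_\delta(y)\bigr)\right\}>0.$$
Since $u_\delta$ is a subsolution of $H(Du)\le 0$, Lemma~\ref{HLipLem} gives $\ell$-Lipschitz regularity, which together with convexity of $\ell$ makes the concavity defect nonpositive along diverging rays. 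A small penalization $-\epsilon(|x|^2+|y|^2)$ then produces an interior maximizer $(x_1^\epsilon,x_2^\epsilon)$ at which viscosity machinery can be applied.

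To extract jet information I would triple the variables, introducing an auxiliary $z$ coupled to $(x_1,x_2)$ through the penalty $\tfrac{1}{2\eta}|z-(x_1+x_2)/2|^2$, and invoke Theorem~3.2 of~\cite{CIL}. Testing the resulting matrix inequality against the diagonal direction $(v,v,v)$---which lies in the kernel of the coupling map $(x_1,x_2,z)\mapsto z-(x_1+x_2)/2$, thereby annihilating the dominant $O(1/\eta)$ contribution of the penalty---yields, in the limit $\eta\to 0$, a common covector $p$ and matrices $X_1,X_2,Z\in\Sn$ with $(p,X_i)\in\overline{J}^{2,-}u_\delta(x_i^\epsilon)$, $(p,Z)\in\overline{J}^{2,+}u_\delta(z^\epsilon)$, and the crucial inequality
$$Z\le \tfrac{1}{2}(X_1+X_2).$$
Monotonicity (ellipticity) and concavity of $F$ then give $F(Z)\ge F\bigl(\tfrac{1}{2}(X_1+X_2)\bigr)\ge \tfrac{1}{2}\bigl(F(X_1)+F(X_2)\bigr)$.

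The subsolution inequality at $z^\epsilon$ delivers $H(p)\le 0$ and $\delta u_\delta(z^\epsilon)+F(Z)\le f(z^\epsilon)$. In the generic case $H(p)<0$, the supersolution inequality at $x_i^\epsilon$ reduces to $\delta u_\delta(x_i^\epsilon)+F(X_i)\ge f(x_i^\epsilon)$; averaging and subtracting, and using convexity of $f$,
$$\delta\!\left[u_\delta(z^\epsilon)-\tfrac{1}{2}\bigl(u_\delta(x_1^\epsilon)+u_\delta(x_2^\epsilon)\bigr)\right]\le \bigl[f(z^\epsilon)-\tfrac{1}{2}(f(x_1^\epsilon)+f(x_2^\epsilon))\bigr]+\bigl[\tfrac{1}{2}(F(X_1)+F(X_2))-F(Z)\bigr]\le 0,$$
which contradicts $\mathcal{M}>0$ after sending $\epsilon\to 0$.

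The main obstacle is the degenerate case $H(p)=0$, in which the supersolution inequality at the endpoints is satisfied automatically through the gradient alternative and no PDE information survives. To circumvent this, I would either tune the penalty so as to force the gradient at the maximizer strictly inside $\{H\le 0\}$, exploiting the strict convexity of that set, or approximate $H$ by a strictly convex tightening $H_\sigma$ with $\{H_\sigma\le 0\}\subsetneq\{H\le 0\}$, carry out the above argument for the associated solution $u_{\delta,\sigma}$ (existence and uniqueness of which follow from Perron's method and Proposition~\ref{DeltaCompProp}), and pass to the limit $\sigma\to 0$ by the standard stability of viscosity solutions and comparison.
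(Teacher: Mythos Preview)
Your overall architecture is exactly that of the paper: triple the variables, penalize $z-(x+y)/2$, invoke Theorem~3.2 of~\cite{CIL}, and combine ellipticity and concavity of $F$ with convexity of $f$ and the zero-order term $\delta u$. What you are missing is the single device that the paper uses to dissolve \emph{both} difficulties you flag---attainment of the maximum and the degenerate case $H(p)=0$---simultaneously: a scalar parameter $\tau\in(0,1)$ multiplying the middle term. The paper maximizes
\[
w^\tau(x,y,z)=\tau\,u_\delta(z)-\tfrac{1}{2}\bigl(u_\delta(x)+u_\delta(y)\bigr)-\tfrac{1}{2\eta}\bigl|\tfrac{x+y}{2}-z\bigr|^2.
\]
Because $u_\delta/\ell\to 1$ at infinity, the mismatch $\tau<1$ already forces $w^\tau\to-\infty$ as $|x|+|y|+|z|\to\infty$, so no additional $\epsilon(|x|^2+|y|^2)$ penalty is needed. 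More importantly, the jet inclusion at $z_\eta$ now reads $(p_\eta/\tau,Z)\in\overline{J}^{2,+}u_\delta(z_\eta)$, so the subsolution property gives $H(p_\eta/\tau)\le 0$; but the covector appearing at $x_\eta,y_\eta$ is $p_\eta$, and convexity of $H_0$ together with $H_0(0)<0$ yields
\[
H_0(p_\eta)=H_0\!\left(\tau\cdot\tfrac{p_\eta}{\tau}+(1-\tau)\cdot 0\right)\le \tau H_0\!\left(\tfrac{p_\eta}{\tau}\right)+(1-\tau)H_0(0)<0.
\]
Thus the gradient at the endpoint is \emph{strictly} inside $\{H\le 0\}$ for every $\eta$, and the supersolution alternative is forced onto the PDE branch. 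One then sends $\eta\to 0$ and finally $\tau\to 1^-$.

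Your proposed workarounds do not clearly close the gap. ``Tuning the penalty'' is essentially what the $\tau$-scaling does, but you do not say how. Replacing $H$ by a strictly convex tightening $H_\sigma$ and arguing for $u_{\delta,\sigma}$ does not help: the very same boundary case $H_\sigma(p)=0$ reappears for $u_{\delta,\sigma}$, so nothing has been gained. The $\tau$ trick is the missing idea.
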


\begin{proof}
For $\tau\in (0,1)$ and $\eta>0$, we define
$$
w^\tau(x,y,z):=\tau u(z) -\frac{u(x)+u(y)}{2}
$$
and 
$$
\varphi^\eta(x,y,z)=\frac{1}{2\eta}\left|\frac{x+y}{2}-z\right|^2
$$
for $x,y,z\in \R^n.$  Notice that
\begin{eqnarray}\label{SimpleEstW}
(w^\tau -\varphi_\eta)(x,y,z)&=& \tau \left\{u\left(z\right) - u\left(\frac{x+y}{2}\right)\right\} - \frac{1}{2\eta}\left|\frac{x+y}{2}-z\right|^2\nonumber \\
                                                             &  & + \tau u\left(\frac{x+y}{2}\right) - \frac{u(x) + u(y)}{2}\nonumber \\
                                                             &\le &\left( \tau\ell\left(\frac{x+y}{2}-z\right) - \frac{1}{2\eta}\left|\frac{x+y}{2}-z\right|^2\right) \nonumber  \\
                                                             && + \tau u\left(\frac{x+y}{2}\right)-\frac{u(x) + u(y)}{2}. 
\end{eqnarray}
By the growth condition \eqref{ellgrowth}, it follows that 
$$
\lim_{|x|+|y|\rightarrow \infty}\left\{\tau u\left(\frac{x+y}{2}\right)-\frac{u(x) + u(y)}{2}\right\}=-\infty
$$
and therefore
$$
\lim_{|x|+|y|+|z|\rightarrow \infty}(w^\tau-\varphi_\eta)(x,y,z)=-\infty.
$$
In particular, there is $(x_\eta,y_\eta,z_\eta)\in \R^n\times \R^n\times \R^n$ maximizing $w^\tau-\varphi_\eta.$ By Theorem 3.2 in \cite{CIL}, there are $X,Y,Z\in {\mathcal S}(n)$ such that 
\begin{equation}\label{3JetInc}
\begin{cases}
\left( -2D_x\varphi_\eta(x_\eta,y_\eta,z_\eta), X\right)\in \overline{J}^{2,-}u(x_\eta)\\
\left( -2D_{y}\varphi_\eta(x_\eta,y_\eta,z_\eta), Y\right)\in \overline{J}^{2,-}u(y_\eta)\\
\left( \frac{1}{\tau}D_{z}\varphi_\eta(x_\eta,y_\eta,z_\eta), Z\right)\in \overline{J}^{2,+}u(z_\eta)\\
\end{cases}
\end{equation}
and 
\begin{equation}\label{XYZineq}
\tau Z\le \frac{1}{2}(X+Y).
\end{equation}

\par Now set 
$$
p_\eta := -2D_x\varphi_\eta(x_\eta,y_\eta,z_\eta)=-2D_y\varphi_\eta(x_\eta,y_\eta,z_\eta)=D_z\varphi_\eta(x_\eta,y_\eta,z_\eta)=\frac{1}{\eta}\left(z_\eta -\frac{x_\eta+y_\eta}{2}\right).
$$
By the bottom inclusion in \eqref{3JetInc}, 
$$
\max\{\delta u(z_\eta) +F(Z) - f(z_\eta), H(p_\eta/\tau)\}\le 0.
$$
It follows that 
$$
H(p_\eta)=H\left(\tau\frac{p_\eta}{\tau}+(1-\tau)0\right)<0
$$
and by the top two inclusions in \eqref{3JetInc},
$$
\begin{cases}
\delta u(x_\eta) + F(X) - f(x_\eta)\ge 0\\
\delta u(y_\eta) + F(Y) - f(y_\eta)\ge 0
\end{cases}.
$$
Combining these inequalities with \eqref{XYZineq} gives

\begin{align}\label{usingfconvex}
\delta w^\tau(x,y,z)&\le \delta w(x_\eta,y_\eta,z_\eta) \nonumber \\
& = \tau\delta u(z_\eta) -\frac{\delta u(x_\eta)+\delta u(y_\eta)}{2} \nonumber \\
&\le \tau(-F(Z) + f(z_\eta)) -\frac{(-F(X)+f(x_\eta) ) +(-F(Y)+ f(y_\eta))}{2} \nonumber  \\
&= \left[-F(\tau Z) +\frac{F(X)+F(Y)}{2}\right] +\tau f(z_\eta)-\frac{f(x_\eta) + f(y_\eta)}{2}\nonumber \\
&\le \left[-F\left(\frac{X+Y}{2}\right) +\frac{F(X)+F(Y)}{2}\right] +\tau f(z_\eta)-\frac{f(x_\eta) + f(y_\eta)}{2}\nonumber \\
&\le f(z_\eta)-\frac{f(x_\eta) + f(y_\eta)}{2}+(\tau -1)f(z_\eta)\nonumber\\
&\le f(z_\eta)-\frac{f(x_\eta) + f(y_\eta)}{2}+(\tau -1)\inf_{\R^n}f
\end{align}
for each $(x,y,z)\in \R^n.$

\par   Another basic estimate for $w^\tau -\varphi_\eta$ that stems from \eqref{SimpleEstW} and \eqref{UpperLowerell} is
$$
(w^\tau-\varphi_\eta)(x,y,z)\le   \tau u\left(\frac{x+y}{2}\right) - \frac{u(x)+u(y)}{2} + \tau^2c_1^2\eta. 
$$
This inequality gives that $(x_\eta, y_\eta)_{\eta>0}\subset\R^n\times\R^n$ 
is bounded.  For were this not the case, $(w^\tau-\varphi_\eta)(x_\eta,y_\eta,z_\eta)$ tends to $-\infty$ yet 
\begin{eqnarray}
(w^\tau-\varphi_\eta)(x_\eta,y_\eta,z_\eta)&=&\max_{x,y,z}(w^\tau-\varphi_\eta)(x,y,z) \nonumber \\
&\ge &(w^\tau-\varphi_\eta)(0,0, 0) \nonumber \\
& =& (\tau -1) u(0) \nonumber \\
&>& -\infty, \nonumber
\end{eqnarray}
for each $\eta>0.$  Similarly,  $(z_\eta)_{\eta>0}\subset \R^n$ is bounded.  

\par Again we appeal to Lemma 3.1 in \cite{CIL}, which asserts the existence of a cluster point $(x_\tau,y_\tau, (x_\tau+ y_\tau)/2)$ of $((x_\eta, y_\eta,z_\eta))_{\eta>0}$ that maximizes 
$$
(x,y)\mapsto \tau u\left(\frac{x+y}{2}\right) - \frac{u(x)+u(y)}{2}. 
$$
Thus, we may pass to the limit through an appropriate sequence of $\eta$ tending to $0$ in \eqref{usingfconvex} to find for any $x,y\in\R^n$
$$
\tau u\left(\frac{x+y}{2}\right) - \frac{u(x)+u(y)}{2} \le f\left(\frac{x_\tau+y_\tau}{2}\right) - \frac{f(x_\tau)+f(y_\tau)}{2}+(\tau -1)\inf_{\R^n}f\le (\tau -1)\inf_{\R^n}f.
$$
Here we have used the convexity of $f$.  Finally, we conclude upon sending $\tau\rightarrow 1^-$. 
\end{proof}
% Aleksandrov
By Aleksandrov's theorem (section 6.4 of \cite{Gariepy}), $u_\delta$ is twice differentiable at Lebesgue almost every $x\in\R^n$. At any such $x$, if $H(Du(x))<0$, then $x$ must be uniformly bounded for 
$$
f(x)=\delta u_\delta(x) +F(D^2u_\delta)\le \delta u_\delta(x).
$$
Recall that $f$ is superlinear and $u_\delta$ grows at most linearly. As precise statement is as follows.  
\begin{cor}\label{boundedDerOmega}
There is a constant $R$, independent of $\delta\in (0,1)$, such that if $p\in J^{1,-}u_\delta(x)$ and $H(p)<0$, then $|x|\le R$.
\end{cor}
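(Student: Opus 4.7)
The plan is to turn the first-order subjet condition $p\in J^{1,-}u_\delta(x)$ into a usable second-order supersolution inequality for \eqref{deltaProb} by exploiting convexity of $u_\delta$, and then to combine that inequality with the linear growth of the supersolution $\overline{u}$ and the superlinearity of $f$.

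First, since $u_\delta$ is convex by Proposition \ref{UdelConvex}, I would observe that $J^{1,-}u_\delta(x)$ coincides with the subdifferential $\partial u_\delta(x)$: given $p\in J^{1,-}u_\delta(x)$, plugging $y=x+t(z-x)$ into the chord inequality $u_\delta(x+t(z-x))\le (1-t)u_\delta(x)+tu_\delta(z)$, using the subjet bound $u_\delta(y)\ge u_\delta(x)+p\cdot(y-x)+o(|y-x|)$, and sending $t\to 0^+$ gives $u_\delta(z)\ge u_\delta(x)+p\cdot(z-x)$ for every $z\in\R^n$. Consequently the smooth function
$$\phi(y):=u_\delta(x)+p\cdot(y-x)-|y-x|^{2}$$
touches $u_\delta$ from below at $x$, with $D\phi(x)=p$ and $D^2\phi(x)=-2I_n$.

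Next, I would apply the viscosity supersolution definition of \eqref{deltaProb} at $x$ with this test function, obtaining
$$\max\bigl\{\delta u_\delta(x)+F(-2I_n)-f(x),\,H(p)\bigr\}\ge 0.$$
Since $H(p)<0$ by hypothesis, the maximum must be attained by the first argument, so $f(x)\le \delta u_\delta(x)+F(-2I_n)$. Using \eqref{Fassump} with $M=-2I_n$, $N=2I_n\ge 0$ and $F(0)=0$ (from positive homogeneity at $t=0$) gives $F(-2I_n)\le 2\Theta n$. Moreover, by the comparison Proposition \ref{DeltaCompProp}, $u_\delta\le \overline{u}$, and from the explicit form \eqref{uUpper} together with the obvious bound $\inf_{y}\{\tfrac{1}{2}|y|^{2}+\ell(x-y)\}\le \ell(x)\le c_1|x|$, we have
$$\delta u_\delta(x)\le \delta\overline{u}(x)\le K_1+\delta c_1|x|\le K_1+c_1|x|$$
since $\delta\in(0,1)$. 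Combining these estimates yields $f(x)\le K_1+c_1|x|+2\Theta n$, where every constant on the right is independent of $\delta$. Superlinearity of $f$ then produces a radius $R$, independent of $\delta$, such that $|x|\le R$ whenever this inequality holds.

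The main subtlety is ensuring uniformity in $\delta$. Since the explicit supersolution $\overline{u}$ contains the constant $K_1/\delta$ which blows up as $\delta\to 0^+$, it is crucial that this constant only ever appears inside $\delta\overline{u}(x)$, where the factor of $\delta$ cancels the $1/\delta$ and leaves the $\delta$-independent constant $K_1$. Once this observation is made, the rest of the argument is a direct application of the viscosity supersolution definition together with the superlinearity of $f$.
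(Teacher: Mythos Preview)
Your proof is correct and follows essentially the same approach as the paper: use convexity to identify $J^{1,-}u_\delta(x)$ with $\partial u_\delta(x)$, invoke the supersolution property of \eqref{deltaProb}, and combine the resulting bound $f(x)\le \delta u_\delta(x)+C$ with the linear upper barrier $\overline{u}$ and the superlinearity of $f$. The only cosmetic difference is that the paper uses the affine test function directly (equivalently, $(p,O_n)\in \overline{J}^{2,-}u_\delta(x)$, giving $F(O_n)=0$), whereas you subtract an extra quadratic and pick up the harmless constant $F(-2I_n)\le 2n\Theta$.
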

\begin{proof}
As $u_\delta$ is convex, $J^{1,-}u_\delta(x)=\partial u(x)$; see proposition 4.7 in \cite{Bardi}.
It then follows that $(p,O_n)\in J^{2,-}u_\delta(x)$.  Thus, 
$$
\max\{\delta u_\delta(x)-f(x),H(p)\}\ge 0.
$$
As $H(p)<0$, it must be that $\delta u_\delta(x)-f(x)\ge 0$. As a result, 
$$
f(x)\le \delta u_\delta(x)\le K_1+\ell(x)\le K_1 +c_1|x|.
$$
Thus, $|x|\le R$ for some $R$ that is independent of $\delta\in (0,1)$. 
\end{proof}

Another important corollary is the following ``extension formula" for solutions.  We interpret this formula informally as: once the values of $u_\delta(x)$ are known for 
each $x$ satisfying $H(Du_\delta(x))<0$, $u_\delta$ is determined on all of $\R^n$. 
\begin{cor}\label{ExtCor}
Let
\begin{equation}\label{OmegaDel}
\Omega_\delta:=\R^n\setminus\{x\in \R^n: H(Du_\delta(x))\ge 0\;\text{in the viscosity sense}\;\}.
\end{equation}
Then
\begin{equation}\label{ExtensionForm}
u_\delta(x)=\inf\left\{u_\delta(y)+\ell(x-y): y\in \Omega_\delta\right\}, \quad x\in \R^n.
\end{equation}
Moreover, the infimum in \eqref{ExtensionForm} can be taken over $\partial\Omega_\delta$ when $x\notin\Omega_\delta$. 
\end{cor}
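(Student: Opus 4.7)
Let $w(x) := \inf_{y \in \Omega_\delta}\{u_\delta(y) + \ell(x - y)\}$, so the task is to show $w = u_\delta$ on $\R^n$. The inequality $u_\delta \le w$ is immediate from Lemma \ref{HLipLem}, which gives $u_\delta(x) \le u_\delta(y) + \ell(x - y)$ for all $y$ because $u_\delta$ satisfies $H(Du_\delta) \le 0$; taking the infimum yields the claim. Conversely, choosing $y = x$ for $x \in \Omega_\delta$ gives $w(x) \le u_\delta(x)$, and by continuity $w = u_\delta$ on $\overline{\Omega_\delta}$. A short argument (using $w(x) \le u_\delta(y_0) + \ell(x - y_0)$ for a fixed $y_0 \in \Omega_\delta$ together with $\ell(x - y_0)/\ell(x) \to 1$) shows that $w$ also satisfies the growth condition \eqref{ellgrowth}.

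To prove $w \le u_\delta$ everywhere I would adapt the doubling-of-variables argument from Proposition \ref{lamCompProp}, after replacing $H$ with the equivalent convex $H_0$. For $\tau \in (0, 1)$ and $\eta > 0$, set
$$
\Phi(x, y) := \tau w(x) - u_\delta(y) - \frac{|x - y|^2}{2\eta}.
$$
The estimate used in Proposition \ref{lamCompProp} combined with the common $\ell(x)$-growth of $w$ and $u_\delta$ shows $\Phi$ attains its supremum at some $(x_\eta, y_\eta)$, with $(x_\eta, y_\eta)$ bounded uniformly in $\eta$ and $|x_\eta - y_\eta|^2/\eta \to 0$; extract a cluster point $\bar x = \lim x_\eta = \lim y_\eta$, which maximizes $\tau w - u_\delta$ on $\R^n$. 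Suppose for contradiction that $\bar x \in \R^n \setminus \overline{\Omega_\delta}$; since this set is open, $y_\eta$ also lies in it for small $\eta$, so $y_\eta \in \R^n \setminus \Omega_\delta$, where by definition $u_\delta$ is a viscosity supersolution of $H_0(Du) \ge 0$. Theorem 3.2 of \cite{CIL} produces $p_\eta := (x_\eta - y_\eta)/\eta$ with $p_\eta \in \overline{J}^{1,-}u_\delta(y_\eta)$ and $p_\eta/\tau \in \overline{J}^{1,+}w(x_\eta)$. The supersolution property forces $H_0(p_\eta) \ge 0$; but $w$ is a subsolution of $H_0 \le 0$ as an infimum of such functions, so $H_0(p_\eta/\tau) \le 0$, and convexity of $H_0$ with $H_0(0) < 0$ yields
$$
H_0(p_\eta) \le \tau H_0(p_\eta/\tau) + (1 - \tau) H_0(0) \le (1 - \tau) H_0(0) < 0,
$$
a contradiction. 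Hence $\bar x \in \overline{\Omega_\delta}$, where $w = u_\delta$, so $\sup \Phi \to (\tau - 1) u_\delta(\bar x)$ along the subsequence. Because $\overline{\Omega_\delta}$ is bounded by Corollary \ref{boundedDerOmega} and $u_\delta$ is continuous, $|u_\delta(\bar x)|$ is bounded uniformly in $\tau$, so $\tau w(x) - u_\delta(x) = \Phi(x, x) \le \sup \Phi$ gives $w(x) \le u_\delta(x)$ upon sending $\tau \to 1^-$.

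For the final refinement, fix $x \notin \Omega_\delta$ and $y \in \Omega_\delta$. By connectedness, the segment $z(t) := (1 - t)x + ty$ must cross $\partial \Omega_\delta$ at some point $y' := z(t^*)$, $t^* \in [0,1)$. Because $x - y' = t^*(x - y)$ and $y' - y = (1-t^*)(x-y)$ are positive multiples of $x - y$, positive homogeneity of $\ell$ yields $\ell(x - y') + \ell(y' - y) = \ell(x - y)$; combining with Lemma \ref{HLipLem}, which gives $u_\delta(y') \le u_\delta(y) + \ell(y' - y)$, one obtains $u_\delta(y') + \ell(x - y') \le u_\delta(y) + \ell(x - y)$. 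Taking the infimum over $y \in \Omega_\delta$ and combining with the reverse Lipschitz bound $u_\delta(x) \le u_\delta(y') + \ell(x - y')$ for $y' \in \partial \Omega_\delta$ gives $u_\delta(x) = \inf_{y \in \partial \Omega_\delta}\{u_\delta(y) + \ell(x - y)\}$. The main obstacle is executing the comparison on the unbounded set $\R^n \setminus \overline{\Omega_\delta}$: the $\tau$-penalization plays a dual role, both ensuring $\tau w - u_\delta$ decays at infinity (so $\Phi$ attains its maximum) and producing the strict inequality $H_0(p_\eta) < 0$ via convexity and $H_0(0) < 0$, which is precisely what clashes with the supersolution condition on $K$.
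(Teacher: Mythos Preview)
Your proof is correct and follows essentially the same route as the paper's: both establish $u_\delta\le w$ from the $\ell$-Lipschitz estimate, equality on $\overline{\Omega}_\delta$, and then $w\le u_\delta$ on the exterior by a $\tau$-penalized first-order comparison of the sub/supersolution pair $H_0(Dw)\le 0\le H_0(Du_\delta)$ (the paper only cites ``an argument similar to Proposition~\ref{lamCompProp}'' for this step, which you spell out). For the boundary refinement you use only the $\ell$-Lipschitz bound and positive homogeneity of $\ell$, whereas the paper invokes the convexity of $u_\delta$; your version is slightly more elementary but otherwise the arguments coincide.
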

\begin{proof}
Set $u=u_\delta$ and define $v$ to be the right hand side of \eqref{ExtensionForm}. Since 
$u(x)\le u(y)+\ell(x-y)$ for each $x,y\in\R^n$, $u\le v$. If $x\in \overline{\Omega}_\delta$, 
there is a sequence $(x_k)_{k\in \N}\subset \Omega_\delta$ converging to $x$ as $k\rightarrow\infty$. Clearly, $v(x)\le u(x_k)+\ell(x-x_k)$ and sending $k\rightarrow\infty$ gives $v(x)\le u(x).$ Thus, $u(x)=v(x)$ for $x\in \overline{\Omega}_\delta$. 

\par Observe that $v(x)-v(y)\le \ell(x-y)$ for all $x,y\in \R^n$. Therefore, $v$ satisfies 
the PDE $H(Dv)\le 0$ on $\R^n.$ In particular, 
$$
\begin{cases}
H(Dv)\le 0\le H(Du), \quad & x\in \R^n\setminus\overline{\Omega}_\delta\\
v=u, \quad &  x\in\partial\Omega_\delta
\end{cases}
$$
while 
$$
\limsup_{|x|\rightarrow \infty}\frac{v(x)}{\ell(x)}\le 1\le \limsup_{|x|\rightarrow \infty}\frac{u(x)}{\ell(x)}.
$$
It follows from an argument similar to one given in Proposition \ref{lamCompProp} used to derive \eqref{ComparisonIneq2}, that
$$
\tau v-u\le (\tau- 1)\inf_{\R^n}f
$$
for each $\tau\in (0,1)$. In particular, $v\le u$ on $\R^n\setminus\overline{\Omega}_\delta$.
So we are able to conclude \eqref{ExtensionForm}.  

\par Now suppose $x\notin\Omega_\delta$ and choose $y\in\Omega_\delta$ such that 
$u(x)=u(y)+\ell(x-y)$.  There is a $t\in [0,1]$ such that 
$$
z=t y +(1-t)x \in \partial \Omega_\delta.
$$
Observe that since $u$ is convex and $\ell$ is positively homogeneous 
\begin{align*}
u(z)+\ell(x-z)&=u(t y +(1-t)x)+\ell(t(x-y))\\
&\le t(u(y) + \ell(x-y)) +(1-t)u(x)\\
&=tu(x)+(1-t)u(x)\\
&\le u(x).
\end{align*}
Thus, the minimum in \eqref{ExtensionForm} occurs on the boundary of $\partial \Omega_\delta$ when $x\notin \Omega_\delta$. 
\end{proof}
We will now verify the existence of an eigenvalue. Let $\delta\in (0,1)$ and $x_\delta$ denote a global minimizer of $u_\delta$
$$
\min_{x\in\R^n}u_\delta(x)=u_\delta(x_\delta).
$$
Clearly, $0\in J^{1,-}u(x_\delta)$ and by assumption $H(0)<0$; thus $x_\delta\in \Omega_\delta$. And by Corollary \ref{boundedDerOmega}, $|x_\delta|\le R$.  Set 
$$
\begin{cases}
\lambda_\delta:=\delta u_\delta(x_\delta)\\
v_\delta(x):=u_\delta(x)-u_\delta(x_\delta),\quad x\in \R^n
\end{cases}.
$$
In view of \eqref{uUpper}, \eqref{uLower},
\begin{equation}\label{LamDelBounds}
-\left(\inf_{\R^n}f\right)^{-}\le \lambda_\delta \le K_1 +\frac{1}{2}R^2;
\end{equation}
and by \eqref{UpperLowerell}
\begin{equation}\label{veeDelBounds}
\begin{cases}
0\le v_\delta(x)\le c_1(|x|+R)\\
|v_\delta(x)-v_\delta(y)|\le c_1|x-y|
\end{cases}
\end{equation}
for $x,y\in \R^n$ and $0<\delta<1$.
\begin{proof} (part $(i)$ of Theorem \ref{Thm1})
By \eqref{LamDelBounds} and \eqref{veeDelBounds}, there is a sequence of positive numbers $(\delta_k)_{k\in \N}$ tending to $0$, $\lambda^*\in \R$ and $u^*\in C(\R^n)$ such that $\lambda_{\delta_k}\rightarrow \lambda^*$ and $v_{\delta_k}\rightarrow u^*$ locally uniformly on $\R^n$. By the stability of viscosity solutions under locally uniform convergence (Lemma 6.1 in \cite{CIL}), $u^*$ satisfies \eqref{EigProb} with $\lambda=\lambda^*$. 

\par In view of the extension formula \eqref{ExtensionForm},
\begin{align*}
v_{\delta_k}(x)&=u_{\delta_k}(x)-u_{\delta_k}(x_{\delta_k})\\
&=\inf_{y\in \Omega_{\delta_k}}\{u_{\delta_k}(y)-u_{\delta_k}(x_{\delta_k})+\ell(x-y)\}\\
&\ge \inf_{y\in \Omega_{\delta_k}}\{\ell(x-y)\}\\
&\ge \inf_{y\in \Omega_{\delta_k}}\{\ell(x)-\ell(y)\}\\
&=\ell(x)-\sup_{y\in\Omega_{\delta_k}}\ell(y)\\
&\ge \ell(x)-\sup_{|y|\le R}\ell(y).
%u_{\delta}(x) \ge \inf_{|y|\le R}\{u_\delta(y)+\}
\end{align*}
Thus, $u^*(x)\ge  \ell(x)-\sup_{|y|\le R}\ell(y)$ and in particular, $u^*$ satisfies the growth 
condition \eqref{ellgrowth}. It now follows that $\lambda^*$ is the desired eigenvalue. 
\end{proof}
We now have the following characterization of the eigenvalue $\lambda^*$.  See also 
\cite{Armstrong} for a similar characterization of eigenvalues of operators that are uniformly elliptic, fully nonlinear, and positively homogeneous.  
\begin{cor}
Let $\lambda^*$ be as described in part $(i)$ of Theorem \ref{Thm1}. Then 
\begin{align}\label{LamChar1}
\lambda^*&=\sup\{\lambda\in \R: \text{there is a subsolution $u$ of \eqref{EigProb} with eigenvalue $\lambda$} \nonumber \\
&\left.\hspace{1in} \text{satisfying}\; \limsup_{|x|\rightarrow \infty}\frac{u(x)}{\ell(x)}\le 1\right\}.
\end{align}
and
\begin{align}\label{LamChar2}
\lambda^*&=\inf\{\mu\in \R: \text{there is a supersolution $v$ of \eqref{EigProb} with eigenvalue $\mu$} \nonumber \\
&\left.\hspace{1in} \text{satisfying}\; \liminf_{|x|\rightarrow \infty}\frac{v(x)}{\ell(x)}\ge 1\right\}.
\end{align}
\end{cor}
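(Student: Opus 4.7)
The plan is to establish each characterization by a two-sided bound, using the eigenfunction $u^*$ whose existence is part $(i)$ of Theorem \ref{Thm1} together with the comparison principle of Proposition \ref{lamCompProp}. Throughout, observe that $u^*$ itself satisfies \eqref{ellgrowth}, and in particular obeys both inequalities in \eqref{growthComp}: the $\limsup$ bound follows automatically from $H(Du^*)\le 0$ via Lemma \ref{HLipLem}, while the $\liminf$ bound is exactly \eqref{ellgrowth}. Consequently $u^*$ simultaneously serves as a subsolution and a supersolution having eigenvalue $\lambda^*$ and satisfying the appropriate one-sided growth conditions in both \eqref{LamChar1} and \eqref{LamChar2}.

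For \eqref{LamChar1}, the inequality $\lambda^*\le \sup\{\dots\}$ is immediate from taking $u=u^*$ in the supremum. For the reverse, suppose $u$ is any subsolution of \eqref{EigProb} with eigenvalue $\lambda$ satisfying $\limsup_{|x|\to\infty}u(x)/\ell(x)\le 1$. Since $u^*$ is a supersolution with eigenvalue $\lambda^*$ satisfying $\liminf_{|x|\to\infty}u^*(x)/\ell(x)\ge 1$, the pair $(u,u^*)$ fulfills the hypotheses of Proposition \ref{lamCompProp}, whence $\lambda\le \lambda^*$. Taking the supremum over all admissible $\lambda$ gives $\sup\{\dots\}\le\lambda^*$.

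For \eqref{LamChar2}, the argument is symmetric. Choosing $v=u^*$ in the infimum shows $\inf\{\dots\}\le \lambda^*$. Conversely, given any supersolution $v$ of \eqref{EigProb} with eigenvalue $\mu$ satisfying $\liminf_{|x|\to\infty}v(x)/\ell(x)\ge 1$, apply Proposition \ref{lamCompProp} with $u^*$ as subsolution (eigenvalue $\lambda^*$, automatic $\limsup$ bound) and $v$ as supersolution to conclude $\lambda^*\le \mu$. Passing to the infimum over $\mu$ yields $\lambda^*\le\inf\{\dots\}$.

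There is no real obstacle; the content of the corollary is already packed into the existence of an eigenfunction with the sharp growth \eqref{ellgrowth} combined with the comparison principle. The only point requiring care is to note that $u^*$ participates on both sides of \eqref{growthComp} at once, so it can be compared against any admissible competitor in either characterization without further construction.
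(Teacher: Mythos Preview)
Your proof is correct and is exactly the argument the paper has in mind: the corollary is stated without proof precisely because it follows at once from the existence of $u^*$ in part $(i)$ of Theorem \ref{Thm1} together with Proposition \ref{lamCompProp}, in the two-sided fashion you describe.
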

In particular, choosing $\lambda=\inf_{\R^n}f$ and $u\equiv 0$ in \eqref{LamChar1} gives $\lambda^*\ge \inf_{\R^n}f$. And selecting 
$\mu=-F(I_n)+\sup_{H(x)\le 0}f(x)$ and $v(x)=\inf_{\R^n}\{|y|^2/2+\ell(x-y)\}$ in \eqref{LamChar2} gives $\lambda^*\le -F(I_n)+\sup_{H(x)\le 0}f(x)$. In 
summary, we have the bounds on $\lambda^*$
$$
\inf_{x\in\R^n}f(x)\le \lambda^*\le-F(I_n)+\sup_{H(x)\le 0}f(x).
$$

%%%%%%%%%%%%%%%%%%%%%%%%%%%%%%%%%% Regularity %%%%%%%%%%%%%%%%%%%%%%%%%%%%%%%%%
\section{Regularity of solutions}\label{RegSect}
% Far W2infty bounds
Our goal in this section is to prove part $(ii)$ of Theorem \ref{Thm1}. To this end, we will assume that $F$ is uniformly elliptic, assume $H$ satisfies \eqref{Hassump2} 
and derive a uniform upper bound on $D^2u_\delta$.   Recall $u_\delta$ is the unique solution of \eqref{deltaProb} that satisfies \eqref{ellgrowth}.  We will first use an easy semiconcavity argument to bound $D^2u_\delta(x)$ for all large values of $|x|$.  Then we will 
pursue second derivatives bounds on $u_\delta$ for smaller values of $|x|$. To this end, we will employ to the so-called ``penalty method" introduced by L. C. Evans \cite{Evans}.  For other related work, consult also \cite{HyndMawi,Ishii, Soner, Wiegner}.

%---------------------------------------------------------------- Preliminaries -----------------------------------------------------------------------------
\subsection{Preliminaries}
 An important identity for us will be 
\begin{equation}\label{ellFormula}
\ell(v)=\inf_{\lambda>0}\lambda H^*\left(\frac{v}{\lambda}\right), \quad v\in \R^n\setminus\{0\}
\end{equation}
where $H^*(w)=\sup_{p\in \R^n}\{p\cdot w - H(p)\}$ is the Legendre transform of $H$; see exercise 11.6 of \cite{Rock}.  This formula 
is crucial to our method for deriving second derivates estimates on $u_\delta$ for large values of $|x|$. 

\begin{lem}\label{W2infFarOutBound}
Define $\Omega_\delta$ as in \eqref{OmegaDel}. There is a constant $C$ such that 
$$
D^2u_\delta(x)\le \frac{C}{\text{dist}(x,\Omega_\delta)}I_n
$$
for Lebesgue almost every $x\in \R^n\setminus\overline{\Omega}_\delta$. 
\end{lem}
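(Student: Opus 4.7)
The plan is to combine the extension formula of Corollary \ref{ExtCor} with the representation \eqref{ellFormula} of $\ell$, viewing $\ell$ as an infimum of smooth functions whose Hessians are uniformly controlled by the uniform convexity of $H$.

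First I would set up a smooth upper envelope for $\ell$. Under \eqref{Hassump2} the Legendre dual $H^*$ satisfies $D^2 H^* \le \sigma^{-1} I_n$ almost everywhere. For each fixed $v \neq 0$, the map $\lambda \mapsto \lambda H^*(v/\lambda)$ is coercive and strictly convex on $(0,\infty)$, so the infimum in \eqref{ellFormula} is attained at a unique $\hat\lambda(v) > 0$. Joint positive homogeneity of $(\lambda, v) \mapsto \lambda H^*(v/\lambda)$ forces $\hat\lambda(\mu v) = \mu\hat\lambda(v)$ for $\mu > 0$, while continuous dependence of the argmin on parameters yields $\hat\lambda \in C(\R^n \setminus \{0\})$. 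Compactness of the unit sphere then produces a constant $c_\star > 0$ with $\hat\lambda(v) \ge c_\star |v|$ for all $v \ne 0$. Moreover, for each $v_0 \ne 0$, the $C^2$ function $v \mapsto \hat\lambda(v_0) H^*(v/\hat\lambda(v_0))$ lies above $\ell$ with equality at $v_0$.

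Second, for $x \in \R^n \setminus \overline{\Omega}_\delta$, Corollary \ref{ExtCor} supplies $y \in \partial\Omega_\delta$ with $u_\delta(x) = u_\delta(y) + \ell(x-y)$ and the global inequality $u_\delta(x') \le u_\delta(y) + \ell(x'-y)$. Defining
$$\tilde w(x') := u_\delta(y) + \hat\lambda(x-y)\, H^*\!\bigl((x'-y)/\hat\lambda(x-y)\bigr),$$
the previous step gives $\tilde w \ge u_\delta$ everywhere with equality at $x$, and $\tilde w$ is $C^2$ in a neighborhood of $x$ since $x \ne y$. By Aleksandrov's theorem applied to the convex function $u_\delta$ (Proposition \ref{UdelConvex}), $D^2 u_\delta(x)$ exists classically at a.e.\ such $x$, and the standard touching-from-above comparison gives
$$D^2 u_\delta(x) \le D^2\tilde w(x) = \frac{1}{\hat\lambda(x-y)}\, D^2 H^*\!\bigl((x-y)/\hat\lambda(x-y)\bigr) \le \frac{1}{\sigma\, \hat\lambda(x-y)} I_n.$$
Combining with $\hat\lambda(x-y) \ge c_\star |x-y| \ge c_\star\, \dist(x, \partial\Omega_\delta) = c_\star\, \dist(x, \Omega_\delta)$ yields the claimed bound with $C = (\sigma c_\star)^{-1}$.

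The main obstacle is extracting the $|v|^{-1}$ scaling from the envelope representation, equivalently proving $\hat\lambda(v) \ge c_\star |v|$. This hinges on continuity and positive $1$-homogeneity of the minimizer $\hat\lambda$, both of which follow from strict convexity and coercivity of $\lambda \mapsto \lambda H^*(v/\lambda)$, which in turn are consequences of the uniform convexity of $H$. Once the envelope is in place, the extension formula does the remaining work of transferring the Hessian estimate from $\ell$ to $u_\delta$.
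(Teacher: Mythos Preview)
Your proposal is correct and follows essentially the same route as the paper: represent $\ell$ via \eqref{ellFormula}, show the minimizing $\hat\lambda(v)$ is positively $1$-homogeneous and hence bounded below by $c_\star|v|$, and then combine the resulting semiconcavity of $\ell$ with the extension formula from Corollary~\ref{ExtCor}. The only cosmetic difference is that the paper phrases the last step via second differences ($\ell(v+h)-2\ell(v)+\ell(v-h)\le |h|^2/(\sigma\gamma|v|)$ and then the analogous inequality for $u_\delta$), whereas you use a touching-from-above argument. One small caveat: under \eqref{Hassump2} the dual $H^*$ is only $C^{1,1}$ in general, so $\tilde w$ need not be $C^2$ at the particular point $x$; the paper's second-difference version avoids this, and you can too by replacing ``$\tilde w$ is $C^2$'' with ``$\tilde w - \tfrac{1}{2\sigma\hat\lambda(x-y)}|\cdot|^2$ is concave'' and comparing with the quadratic at an Aleksandrov point of $u_\delta$.
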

\begin{proof}
We will employ formula \eqref{ellFormula}. We will also use that
\begin{equation}\label{Hstar}
H^*(0)>0
\end{equation}
and 
\begin{equation}\label{Hstar2}
\frac{1}{\Sigma}|\xi|^2\le D^2H^*(w)\xi\cdot \xi\le \frac{1}{\sigma}|\xi|^2,\quad \xi\in\R^n
\end{equation}
for almost every $w\in \R^n$.   Let $v\in \R^n\setminus\{0\}$ and $\lambda>0$. Note \eqref{Hstar2} implies 
\begin{equation}\label{lowerHstar}
 \lambda H^*(0)+DH^*(0)\cdot v +\frac{1}{2\Sigma \lambda}|v|^2\le \lambda H^*\left(\frac{v}{\lambda}\right)\le \lambda H^*(0)+DH^*(0)\cdot v +\frac{1}{2\sigma \lambda}|v|^2.
\end{equation}
Thus, $\lim_{\lambda\rightarrow 0^+}\lambda H^*\left(v/\lambda\right)=+\infty$. And with \eqref{Hstar}, we also conclude that $\lim_{\lambda\rightarrow \infty}\lambda H^*\left(v/\lambda\right)=+\infty.$  As $\lambda\mapsto \lambda H^*\left(v/\lambda\right)$ is strictly convex, there is a unique $\lambda=\lambda(v)>0$ for which  $\ell(v)=\lambda(v) H^*(v/\lambda(v))$.  

\par Using the positive homogeneity of $\ell$, for $t>0$
\begin{align*}
\lambda(tv) H^*\left(\frac{tv}{\lambda(tv)}\right)&=\ell(tv)\\
&=t\ell(v)\\
&=t\lambda(v) H^*\left(\frac{v}{\lambda(v)}\right)\\
&=t\lambda(v) H^*\left(\frac{tv}{t\lambda(v)}\right).
\end{align*}
Thus, $\lambda(tv)=t\lambda(v)$. It also follows from \eqref{lowerHstar} that 
$$
\gamma:=\inf_{|v|=1}\lambda(v)>0.
$$
In particular, $\lambda(v)\ge \gamma |v|$, for each $v\neq 0.$

\par Again let $v\neq 0$, and choose $h\in \R^n$ so small that $v\pm h\neq 0$. Then for $\lambda=\lambda(v)$
\begin{align*}
\ell(v+h)-2\ell(v)+\ell(v-h)&\le \lambda H^*\left(\frac{v+h}{\lambda}\right) - 2\lambda H^*\left(\frac{v}{\lambda}\right)+\lambda H^*\left(\frac{v-h}{\lambda}\right)\\
&=\lambda\left[ H^*\left(\frac{v}{\lambda}+\frac{h}{\lambda}\right)-2H^*\left(\frac{v}{\lambda}\right)+H^*\left(\frac{v}{\lambda}-\frac{h}{\lambda}\right) \right]\\
&\le \lambda \frac{1}{\sigma}\left|\frac{h}{\lambda}\right|^2\\
&=\frac{1}{\sigma \lambda}|h|^2\\
&\le \frac{1}{\gamma \sigma|v|}|h|^2.
\end{align*}

\par Now we can employ the extension formula \eqref{ExtensionForm}. Let $x\in \R^n\setminus\overline{\Omega}_\delta$ and choose $h$ so small that $x\pm h\in\R^n\setminus\overline{\Omega}_\delta$. 
Selecting $y\in \partial\Omega_\delta$ so that $u_\delta(x)=u_\delta(y)+\ell(x-y)$ gives
\begin{align*}
u_\delta(x+h)-2u_\delta(x)+u_\delta(x-h)&\le \ell(x-y+h)-2\ell(x-y)+\ell(x-y-h)\\
&\le \frac{1}{\gamma \sigma |x-y|}|h|^2\\
&\le \frac{C }{\text{dist}(x,\partial \Omega_\delta)}|h|^2.
\end{align*}
The claim follows as $u_\delta$ is differentiable Lebesgue almost everywhere. 
\end{proof}
In order to complete the proof of part $(ii)$ of Theorem \ref{Thm1}, we must bound the second derivatives on $u_\delta$ on some subset of $\R^n$ that includes $\overline{\Omega}_\delta$. Before we detail our approach, it will be necessary for us to differentiate (a smoothing) of $F$. To this end, we extend $F$ 
to the space $\Mn$ of all $n\times n$ real matrices as follows 
$$
\overline{F}(M):=F\left(\frac{1}{2}(M+M^t)\right), \quad M\in \Mn. 
$$
We can then treat $\overline{F}(M)$ as a function of the $n^2$ real entries of the matrix $M\in \Mn$.  It is readily checked that $\overline{F}$ is uniformly elliptic, positively homogeneous and superadditive on  $\Mn$. In particular, $\overline{F}$ satisfies \eqref{Fassump} for each $M,N\in \Mn$ and $t\ge 0$.  This allows us to identify $F$ with $\overline{F}$ and we shall do this for the remainder of this section. 
% Mollify F
\par We now define $F^\varrho$ as the standard mollification of $F$ 
$$
F^\varrho(M):=\int_{\Mn}\eta^\varrho(N)F(M-N)dN, \quad M\in \Mn. 
$$
The integral above is over the $n^2$ real variables $N=(N_{ij})\in \Mn$, and as in Lemma \ref{HLipLem}, $\eta\in C^\infty_c(\Mn)$ is a nonnegative function that is supported in $\{M\in \Mn: |M|\le 1\}$ and 
$\eta(M)$ only depends on $|M|$. Moreover, $\eta$ satisfies $\int_{\R^n}\eta(Z)dZ=1$ and we have defined $\eta^\varrho:=\varrho^{-n^2}\eta(\cdot/\varrho)$. See also section 4 of \cite{HyndMawi} or Proposition 9.8 in \cite{CC} for more details on mollifying functions of matrices.

\par It is readily verified that $F^\varrho\in C^\I(\Mn)$ and, with the help of \eqref{Fassump}, $F^\varrho$ is uniformly elliptic, concave and satisfies 
\begin{equation}\label{FFvarrhoEst}
F^\varrho(M)\le F(M)\le F^\varrho(M)+\sqrt{n}\Theta \varrho, \quad M\in \Mn. 
\end{equation}
However, $F^\varrho$ is not in general positively homogeneous. Nevertheless, $F^\varrho$ inherits a certain almost homogeneity property.
\begin{lem}\label{HomogeneityLEM}
For every $M\in \Mn$, 
$$
F^\varrho(M)=F^\varrho_{M_{ij}}(M)M_{ij} - \int_{\Mn}\eta^\varrho(N)F_{M_{ij}}(M-N)N_{ij}dN.
$$
In particular,
\begin{equation}\label{FrhoAlmostHomo}
|F^\varrho(M)-F^\varrho_{M_{ij}}(M)M_{ij}|\le \sqrt{n}\Theta\varrho, \quad M\in \Mn.
\end{equation}
\end{lem}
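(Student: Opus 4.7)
The plan is to combine Euler's identity for the one-homogeneous function $F$ with differentiation under the integral. Since the ellipticity bound in \eqref{Fassump} forces $F$ to be Lipschitz on $\Mn$, Rademacher's theorem yields differentiability at a.e.\ $P\in\Mn$; differentiating the homogeneity relation $F(tP)=tF(P)$ in $t$ at $t=1$ then gives the Euler identity $F(P)=F_{M_{ij}}(P)P_{ij}$ (Einstein summation) for a.e.\ $P$. The same Lipschitz bound lets me differentiate under the integral via dominated convergence to obtain
\[
F^\varrho_{M_{ij}}(M)=\int_{\Mn}\eta^\varrho(N)F_{M_{ij}}(M-N)\,dN.
\]

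Substituting Euler's identity at $P=M-N$ into $F^\varrho(M)=\int\eta^\varrho(N)F(M-N)\,dN$ and splitting the integrand $\eta^\varrho(N)F_{M_{ij}}(M-N)(M_{ij}-N_{ij})$ into two pieces produces the claimed identity, once the first piece is recognized as $F^\varrho_{M_{ij}}(M)M_{ij}$ after pulling the constants $M_{ij}$ out of the integral.

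For the quantitative estimate \eqref{FrhoAlmostHomo}, the key point is the pointwise bound $\bigl|\sum_{i,j}F_{M_{ij}}(P)N_{ij}\bigr|\le \sqrt{n}\,\Theta\,|N|$ at a.e.\ $P\in\Mn$, after which the conclusion follows by Cauchy-Schwarz and the fact that $\eta^\varrho$ has unit mass and is supported in $\{|N|\le\varrho\}$. To establish this I would show $|DF(P)|_F\le\sqrt{n}\,\Theta$ at a.e.\ $P$, in two steps. First, $\overline{F}$ is invariant under antisymmetric perturbations, since $\overline{F}(M+tN)=F(M_s+tN_s)=\overline{F}(M)$ when $N^T=-N$; differentiating at $t=0$ shows $\overline{F}_{M_{ij}}=\overline{F}_{M_{ji}}$, so the gradient matrix $DF(P)$ is symmetric. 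Second, for any unit $\xi\in\R^n$ the rank-one PSD perturbation $h\xi\xi^T$ has trace $h$, so the upper and lower ellipticity bounds in \eqref{Fassump} give $\xi^TDF(P)\xi\in[-\Theta,-\theta]$. Hence every eigenvalue of $DF(P)$ lies in $[-\Theta,-\theta]$, and its Frobenius norm is at most $\sqrt{n}\,\Theta$.

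The main potential obstacle is keeping straight the almost-everywhere nature of both Euler's identity and the differentiation under the integral. But since $F$ is Lipschitz and both facts are standard consequences of this, the argument is essentially assembly rather than estimation; no serious difficulty is anticipated.
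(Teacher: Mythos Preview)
Your proposal is correct and follows essentially the same route as the paper: Euler's identity for the one-homogeneous $F$, differentiation under the integral for $F^\varrho_{M_{ij}}$, the splitting $M_{ij}=(M_{ij}-N_{ij})+N_{ij}$, and then the Frobenius bound $|DF|\le\sqrt{n}\,\Theta$ combined with the support of $\eta^\varrho$ in $\{|N|\le\varrho\}$. Your justification of the Frobenius bound via the symmetry of $DF$ and the eigenvalue inclusion $[-\Theta,-\theta]$ is in fact more explicit than the paper's, which records the quadratic-form inequality $-\Theta|\xi|^2\le F_{M_{ij}}\xi_i\xi_j\le -\theta|\xi|^2$ and passes directly to $\sqrt{\sum_{ij}(F_{M_{ij}})^2}\le\sqrt{n}\,\Theta$.
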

\begin{proof}
By the ellipticity assumption \eqref{Fassump}, $F$ is Lipschitz continuous. Rademacher's Theorem then implies
that $F$ is differentiable for Lebesgue almost every $M\in \Mn$, which we identify with $\R^{n^2}$. Therefore, 
$$
F^\varrho_{M_{ij}}(M)=\int_{\Mn}\eta^\varrho(N)F_{M_{ij}}(M-N)dN.
$$
See Theorem 1 of section 5.3 in \cite{Evans2} for an easy verification of this equality.  Since 
$F$ is positively homogenous of degree one, 
$$
F(M)=F_{M_{ij}}(M)M_{ij}
$$
for Lebesgue almost every $M\in \Mn$. And therefore, 
\begin{align*}
F^\varrho_{M_{ij}}(M)M_{ij}&=\int_{\Mn}\eta^\varrho(N)F_{M_{ij}}(M-N)M_{ij}dN\\
&=\int_{\Mn}\eta^\varrho(N)F_{M_{ij}}(M-N)(M_{ij}-N_{ij})dN\\
&\quad +\int_{\Mn}\eta^\varrho(N)F_{M_{ij}}(M-N)N_{ij}dN\\
&=\int_{\Mn}\eta^\varrho(N)F(M-N)dN+\int_{\Mn}\eta^\varrho(N)F_{M_{ij}}(M-N)N_{ij}dN\\
&=F^\varrho(M)+\int_{\Mn}\eta^\varrho(N)F_{M_{ij}}(M-N)N_{ij}dN.
\end{align*}
\par The ellipticity assumption \eqref{Fassump} also implies
$$
-\Theta|\xi|^2\le F_{M_{ij}}(M)\xi_i\xi_j\le-\theta|\xi|^2, \quad \xi\in \R^n
$$
for almost every $M\in \Mn$. Therefore, 
\begin{align*}
\left|\int_{\Mn}\eta^\varrho(N)F_{M_{ij}}(M-N)N_{ij}dN\right|&\le\int_{\Mn}\eta^\varrho(N)\left|F_{M_{ij}}(M-N)N_{ij}\right|dN \\
&\le \int_{\Mn}\eta^\varrho(N)\sqrt{\sum^n_{ij=1}\left(F_{M_{ij}}(M-N)\right)^2}\; |N|dN\\
&\le \sqrt{n}\Theta \int_{\Mn}\eta^\varrho(N)|N|dN\\
& = \sqrt{n}\Theta\varrho \int_{|Z|\le 1}\eta(Z)|Z|dZ \quad (Z=N/\varrho)\\
& \le \sqrt{n}\Theta\varrho \int_{|Z|\le 1}\eta(Z)dZ \\
& = \sqrt{n}\Theta\varrho.
\end{align*}
\end{proof}

% First smoothing in \varrho 
\par We will additionally need to smooth out $H$ and $f$, and we will do so by using the standard mollifications 
 $H^\varrho=\eta^\varrho*H$ and $f^\varrho=\eta^\varrho*f$. Here $\eta$ is a standard mollifier on $\R^n$. We also select $\varrho_1$ so small that 
 \begin{equation}\label{Hsmallvarrho1}
 H^\varrho(0)<0, \quad \varrho\in (0,\varrho_1).
 \end{equation}
 The following lemma asserts that the solution of the PDE \eqref{deltaProb} is well approximated by a solution of the same equation with $H^\varrho$ and $f^\varrho$ replacing $H$ and $f$.

 \begin{lem}\label{firstApproxLem}
Assume $\delta\in (0,1)$ and $\varrho\in (0,\varrho_1)$. Let $u_{\delta,\varrho}$ be solution of \eqref{deltaProb} with $F, H^\varrho$, and $f^\varrho$ subject to the growth condition \eqref{ellgrowth} with 
$\ell^\varrho(v)=\sup\{p\cdot v: H^\varrho(p)\le 0\}$ replacing $\ell$. Then $\lim_{\varrho \rightarrow 0^+} u_{\delta,\varrho}=u_\delta$
locally uniformly on $\R^n$. 
\end{lem}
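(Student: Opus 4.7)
The plan is to follow a standard viscosity stability argument: obtain uniform bounds on $u_{\delta,\varrho}$ independent of $\varrho$, extract a locally uniformly convergent subsequence, identify the limit as a viscosity solution of the original PDE \eqref{deltaProb} satisfying the correct growth, and then invoke uniqueness from Proposition \ref{DeltaCompProp} to conclude that the whole family converges.

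First I would observe that since $H\in C(\R^n)$ with $H(0)<0$ and $\{H\le 0\}$ is compact and strictly convex, while $H^\varrho\to H$ uniformly on compact sets, the sublevel sets $\{H^\varrho\le 0\}$ converge in Hausdorff distance to $\{H\le 0\}$ as $\varrho\to 0^+$ (for $\varrho<\varrho_1$ they are nonempty by \eqref{Hsmallvarrho1}). Consequently the support functions $\ell^\varrho$ converge locally uniformly to $\ell$, and the constants $c_0,c_1$ in the analogue of \eqref{UpperLowerell} for $\ell^\varrho$ may be chosen uniformly in $\varrho$. Next I would rebuild the barriers $\overline{u}_\varrho$ and $\underline{u}_\varrho$ from \eqref{uUpper} and \eqref{uLower} for each approximating PDE; using $f^\varrho\to f$ locally uniformly and the uniform boundedness of $\{H^\varrho\le 0\}$, the constants $K_1,K_2$ can also be taken independent of $\varrho$. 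The comparison principle for the approximating equation then yields $\underline{u}_\varrho\le u_{\delta,\varrho}\le \overline{u}_\varrho$ with linear growth bounds whose constants do not depend on $\varrho$. Applying Lemma \ref{HLipLem} to $H^\varrho$ gives the uniform Lipschitz estimate $u_{\delta,\varrho}(x)-u_{\delta,\varrho}(y)\le \ell^\varrho(x-y)\le c_1|x-y|$.

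By Arzel\`a--Ascoli, any sequence $\varrho_k\to 0^+$ has a subsequence along which $u_{\delta,\varrho_k}$ converges locally uniformly to some $u\in C(\R^n)$. Since $F$ is fixed and both $H^\varrho\to H$ and $f^\varrho\to f$ locally uniformly, the stability of viscosity solutions under locally uniform convergence (Lemma 6.1 in \cite{CIL}) shows that $u$ is a viscosity solution of \eqref{deltaProb}. The delicate point is ensuring that $u$ inherits the growth condition \eqref{ellgrowth} with $\ell$ (not with some residual $\ell^\varrho$). Here I would pass to the limit in the barriers $\underline{u}_\varrho(x)\le u_{\delta,\varrho}(x)\le \overline{u}_\varrho(x)$ using the local uniform convergence $\ell^\varrho\to \ell$, which yields $\ell(x)-C\le u(x)\le \ell(x)+C'$ for constants $C,C'$ and all large $|x|$, hence $\lim_{|x|\to\infty}u(x)/\ell(x)=1$. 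Uniqueness from Proposition \ref{DeltaCompProp} then forces $u=u_\delta$; because every subsequence admits a further subsequence converging to this same limit, the full family $u_{\delta,\varrho}$ converges locally uniformly to $u_\delta$.

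The main obstacle I anticipate is precisely the transfer of the growth condition to the limit, since the growth hypothesis in the approximating problems is stated in terms of $\ell^\varrho$ rather than $\ell$. Once the Hausdorff convergence of the sublevel sets and the resulting local uniform convergence $\ell^\varrho\to \ell$ are in hand, the barrier construction and the standard viscosity stability machinery deliver the conclusion without further difficulty.
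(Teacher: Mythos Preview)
Your argument is correct and follows the same overall strategy as the paper: uniform local bounds and equicontinuity, Arzel\`a--Ascoli, viscosity stability, verification of the growth condition \eqref{ellgrowth} for the limit, and then uniqueness from Proposition \ref{DeltaCompProp}. The paper streamlines two steps by exploiting the convexity of $H$ and $f$ (available here since the lemma sits in Section \ref{RegSect}, where \eqref{Hassump2} is in force): Jensen's inequality gives $H\le H^\varrho$ and $f\le f^\varrho$, so $\{H^\varrho\le 0\}\subset\{H\le 0\}$ and $\ell^\varrho\le\ell$ directly, which yields the uniform bound \eqref{udeltarhoBounds} and the Lipschitz estimate without first arguing Hausdorff convergence.

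The only substantive difference is in how the growth condition is transferred to the limit. You sandwich $u_{\delta,\varrho}$ between the barriers $\underline{u}_\varrho$ and $\overline{u}_\varrho$ and pass to the limit in those; the paper instead invokes the extension formula \eqref{ExtensionForm} together with the uniform-in-$(\delta,\varrho)$ bound $\Omega_{\delta,\varrho}\subset B_R(0)$ (the argument of Corollary \ref{boundedDerOmega}) to obtain the explicit lower bound \eqref{Lowerudeltarhobound}, then sends $\varrho\to 0^+$. Both routes work; yours is more self-contained, while the paper's reuses machinery already developed in Section \ref{ExistSec}.
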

\begin{proof} Using test functions as in \eqref{uUpper} and \eqref{uLower} that correspond to \eqref{deltaProb} with $F, H^\varrho$, and $f^\varrho$ we find
$$
\inf_{\R^n}f^\varrho\le u_{\delta,\varrho}\le \frac{1}{\delta}\left(-F(I_n) + \sup_{H^\varrho\le 0}f^\varrho\right)+\ell^\varrho.
$$
By the convexity of $H$ and $f$, Jensen's inequality implies $H\le H^\varrho$ and $f\le f^\varrho$.  It then follows that $\ell^\varrho\le \ell$. By the ellipticity of $F$, $-F(I_n)\le n\Theta$ and so
\begin{equation}\label{udeltarhoBounds}
\inf_{\R^n}f\le u_{\delta,\varrho}\le \frac{1}{\delta}\left(n\Theta + \sup_{H\le 0}f^\varrho\right)+\ell.
\end{equation}
Since $f^\varrho\rightarrow f$ locally uniformly on $\R^n$, $u_{\delta,\varrho}$ is locally bounded on $\R^n$ independently of $\varrho\in (0,\varrho_1)$.  

\par Also notice that $H(Du_{\delta,\varrho})\le H^\varrho(Du_{\delta,\varrho})\le 0$ which implies that $u_{\delta,\varrho}$ is uniformly equicontinuous on $\R^n$. 
It follows that for each sequence of positive numbers $(\varrho_k)_{k\in \N}$ tending to 0, there is a subsequence of $(u_{\delta,\varrho_k})_{k\in \N}$ converging locally 
uniformly to some $u\in C(\R^n)$.  By the stability of viscosity solutions under local uniform convergence, $u$ is a solution of \eqref{deltaProb}.  In order to conclude, it suffices to verify that 
$u$ satisfies \eqref{ellgrowth}. Then by uniqueness we would have $u=u_\delta$ and the full sequence $(u_{\delta,\varrho_k})_{k\in \N}$ must converge to $u_\delta$. 

\par We now employ  the extension formula \eqref{ExtensionForm} with 
$$
\Omega_{\delta,\varrho}:=\R^n\setminus\{x\in \R^n: H(Du_{\delta,\varrho}(x))\ge 0\;\text{in the viscosity sense}\;\}
$$
to get 
\begin{align}\label{Lowerudeltarhobound}
u_{\delta,\varrho}(x)&=\inf_{y\in \Omega_{\delta,\varrho}}\{u_{\delta,\varrho}(x)+\ell^\varrho(x-y) \}\nonumber \\
&\ge\inf_{y\in \Omega_{\delta,\varrho}}\left\{ \inf_{\R^n}f+\ell^\varrho(x)-\ell^\varrho(y)\right\} \nonumber \\
&= \inf_{\R^n}f+\ell^\varrho(x)-\sup_{y\in \Omega_{\delta,\varrho}}\ell(y).
\end{align}
It is immediate from the proof of Corollary \eqref{boundedDerOmega} that there is an $R>0$ such that $\Omega_{\delta,\varrho}\subset B_R(0)$ for each $\delta\in (0,1)$ and $\varrho\in (0,\varrho_1)$.  We also leave it to 
the reader to verify that $\ell(v)=\lim_{\varrho\rightarrow 0^+}\ell^\varrho(v)$ for each $v\in \R^n$.  Passing to the limit along an appropriate sequence of $\varrho$ tending to 0 in \eqref{Lowerudeltarhobound} gives
$$
u(x)\ge  \inf_{\R^n}f+\ell(x)-\sup_{|y|\le R}\ell(y).
$$
Hence, $u$ satisfies \eqref{ellgrowth}. 
\end{proof}

% -------------------------------------------------------------- Penalty Method --------------------------------------------------------------- 
\subsection{The penalty method}
 Now we fix $\delta\in (0,1)$, $\rho\in (0,\rho_1)$ and a choose a ball $B=B_R(0)\subset\R^n$ so large that 
\begin{equation}\label{BbigEnough}
H(p)\le 0\quad  \Longrightarrow\quad |p|\le R. 
\end{equation}
For $\epsilon>0$, we will now focus on solutions of the fully nonlinear PDE 
\begin{equation}\label{PenalizedEqn}
\delta u+F^\varrho(D^2u)+\beta_\epsilon(H^\varrho(Du))=f^\varrho, \quad x\in B
\end{equation}
subject to the boundary condition 
\begin{equation}\label{PenalizedBC}
u(x)=u_{\delta, \varrho}(x), \quad x\in \partial B.
\end{equation}
Recall that $u_{\delta,\varrho}$ is the solution of \eqref{deltaProb} with $F, H^\varrho$, and $f^\varrho$ subject to the growth condition \eqref{ellgrowth} with 
$\ell^\varrho(v)=\sup\{p\cdot v: H^\varrho(p)\le 0\}$ instead of $\ell$.

\par In \eqref{PenalizedEqn}, $F^\varrho$ is a standard mollification of $F$ and the family $\{\beta_\epsilon\}_{\epsilon> 0}$ of functions each satisfy 
% beta
\begin{equation}\label{betaAss}
\begin{cases}
\beta_\epsilon\in C^\infty(\R)\\
\beta_\epsilon(z)=0, \quad z\le 0\\
\beps(z)>0, \quad z>0\\
\beps'\ge0,\\
\beps''\ge0,\\
\beps(z)=(z-\epsilon)/\epsilon, \quad z\ge 2\epsilon\\
\end{cases}.
\end{equation}
Our intuition is that $\beta_\epsilon$ is a smoothing of Lipschitz function $z\mapsto (z/\epsilon)^+$; and therefore, 
solutions of \eqref{PenalizedEqn} should be close to solutions of $\max\{\delta u+F^\varrho(D^2u)-f^\varrho,H^\varrho(Du)\}=0$ that satisfy \eqref{PenalizedBC}. These solutions will in turn be very close to $u_{\delta,\varrho}|_{B}$ for $\varrho$ small (see Lemma \ref{FFvarrholocLem} below).

\par By a theorem of N. Trudinger (Theorem 8.2 in \cite{Trudinger}) there is a unique classical solution $u^\epsilon\in C^\I(B)\cap C(\overline{B})$ solving \eqref{PenalizedEqn} and satisfying the boundary condition \eqref{PenalizedBC}. This result relies on the Evans--Krylov a priori estimates for solutions of concave, fully nonlinear elliptic equations and the continuity method \cite{EvansC2, KrylovC2}. 
 Along with the concavity of $F$, the main structural condition that allows us to apply this theorem is that $p\mapsto \beta_\epsilon(H^\varrho(p))$ grows at most quadratically for each $\epsilon>0$.  We remark that $u^\epsilon$ naturally depends on the other parameters $\delta\in (0,1)$ and $\varrho\in (0,\varrho_1)$; we have chosen not to indicate this dependence for ease of notation.

\par Since $u_{\delta,\varrho}$ solves \eqref{deltaProb} with $F, H^\varrho$, and $f^\varrho$, we have from \eqref{betaAss} and \eqref{FFvarrhoEst} that 
\begin{align*}
\delta u_{\delta,\varrho}+F^\varrho(D^2u_{\delta,\varrho}) + \beta_\epsilon(H^\varrho(Du_{\delta,\varrho}))&= \delta u_{\delta,\varrho}+F^\varrho(D^2u_{\delta,\varrho})\\
&\le\delta u_{\delta,\varrho}+ F(D^2u_{\delta,\varrho})\\
&\le f^\varrho. 
\end{align*}
In view of \eqref{PenalizedEqn} and \eqref{PenalizedBC}, $u_{\delta,\varrho}\le u^\epsilon$ by a routine maximum principle argument. Also note
$$
F^\varrho(D^2u^\epsilon)\le f^\varrho -\delta u^\epsilon\le f^\varrho -\delta u_{\delta,\varrho}.
$$
The Aleksandrov-Bakelman-Pucci estimate (Theorem 3.6 in \cite{CC}, Theorem 17.3 in \cite{Gilbarg}) then implies 
$$
\sup_{B}u^\epsilon\le C\left(\sup_{\partial B}|u_{\delta,\varrho}|+\sup_{B}|f^\varrho-\delta  u_{\delta,\varrho}|\right)
$$
for some constant $C=C(\text{diam}(B),n,\theta,\Theta)$.  Combined with \eqref{udeltarhoBounds} and \eqref{BbigEnough}, we have the following supremum norm bound
$$
|u^\epsilon|_{L^\infty(B)}\le C\left\{\left(\inf_{\R^n}f\right)^- + \sup_B\ell + \frac{1}{\delta}\left(n\Theta+\sup_B|f^\varrho|\right)\right\}.
$$
We will use this estimate to obtain bounds on the higher derivatives of $u^\epsilon$ that will be independent of all $\epsilon>0$ and sufficiently small.  

We are now in a position to derive uniform estimates on the derivatives of $u^\epsilon$. We will borrow from the recent work by the author and H. Mawi on fully nonlinear elliptic equations with convex gradient 
constraints \cite{HyndMawi}. Note however, one of the main assumptions in \cite{HyndMawi} is that the nonlinearity is uniformly elliptic and 
{\it convex}; note the class of nonlinearities we study in this paper satisfy \eqref{Fassump} and are {\it concave}. We will make use of Lemma \ref{HomogeneityLEM} instead of a convexity assumption on $F$.  

\par We will also employ the uniform convexity assumption \eqref{Hassump2}, which implies
\begin{align}\label{Coercive}
\begin{cases}
H^\varrho(p)\ge H^\varrho(0)+DH^\varrho(0)\cdot p+\frac{\sigma}{2}|p|^2\\
DH^\varrho(p)\cdot p-H^\varrho(p)\ge -H^\varrho(0)+\frac{\sigma}{2}|p|^2\\
|DH^\varrho(p)|\le |DH^\varrho(0)| + \sqrt{n}\Sigma|p|
\end{cases}(p\in \R^n).
\end{align}
And we choose $\varrho_1>0$ sufficiently smaller if necessary so that \eqref{Hsmallvarrho1} holds and
$$
\begin{cases}
|H^\varrho(0)|\le |H(0)|+1\\
|DH^\varrho(0)|\le |DH(0)|+1\\
|f^\varrho|_{W^{1,\infty}(B)}\le |f|_{W^{1,\infty}(B_{R+1}(0))} \quad (B=B_R(0))
\end{cases}
$$
for $0<\varrho<\varrho_1$. In stating our uniform estimates below, it will be convenient for us to label the following list  
$$
\Pi:=\left(\sigma,\Sigma,\theta,\Theta, n,\diam(B), H(0),|DH(0)|, |f|_{W^{1,\infty}(B_{R+1}(0))}|, \inf_{\R^n}f,\sup_B\ell, \varrho_1\right).
$$

\begin{lem}
Let $\delta\in(0,1)$, $\varrho\in (0,\varrho_1)$, $\epsilon\in (0,1)$ and suppose $\zeta\in C^\infty_c(B)$ is nonnegative.  There is a constant $C$ depending only on the list $\Pi$ and $|\zeta|_{W^{2,\infty}(B)}$ such 
that 
$$
\zeta(x) |Du^\epsilon(x)|\le C, \quad x\in B.  
$$
\end{lem}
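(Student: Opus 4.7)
My plan is to prove this gradient bound by Bernstein's method applied to the auxiliary function $\phi := \zeta^2 |Du^\epsilon|^2 - \alpha u^\epsilon$, where $\alpha$ is a constant to be chosen depending only on the list $\Pi$. Since $\zeta \in C^\infty_c(B)$ and $u^\epsilon$ is bounded on $\overline{B}$, the function $\phi$ attains its maximum at some interior point $x_0 \in B$; we may assume $\zeta(x_0) > 0$, for otherwise $\zeta|Du^\epsilon|$ vanishes there and the $L^\infty$ bound on $u^\epsilon$ already yields the conclusion up to a constant.

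At $x_0$ write $v := \frac{1}{2}|Du^\epsilon|^2$ and $a^{ij} := F^\varrho_{M_{ij}}(D^2 u^\epsilon(x_0))$; the uniform ellipticity of $F^\varrho$ gives $\theta I_n \le [a^{ij}] \le \Theta I_n$. The first order condition $D\phi(x_0) = 0$ yields $\zeta^2 \nabla v = -2\zeta v\,\nabla \zeta + \alpha\, Du^\epsilon$, and the second order condition reads $a^{ij}\phi_{x_ix_j}(x_0) \le 0$. To replace the third derivatives of $u^\epsilon$ by lower order quantities I differentiate \eqref{PenalizedEqn} in $x_k$, multiply by $u^\epsilon_{x_k}$, sum in $k$, and combine with $v_{x_ix_j} = u^\epsilon_{x_kx_i}u^\epsilon_{x_kx_j} + u^\epsilon_{x_k}u^\epsilon_{x_kx_ix_j}$ to obtain
\[
a^{ij}v_{x_ix_j} = a^{ij}u^\epsilon_{x_kx_i}u^\epsilon_{x_kx_j} + Du^\epsilon\cdot Df^\varrho - \delta|Du^\epsilon|^2 - \beta_\epsilon'(H^\varrho(Du^\epsilon))\,DH^\varrho(Du^\epsilon)\cdot \nabla v.
\]
The first term on the right is bounded below by $\theta|D^2u^\epsilon|^2$, and expanding $a^{ij}\phi_{x_ix_j}$ with the product rule leaves us to contend with the troublesome penalty contribution involving $\beta_\epsilon'$.

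The main obstacle is controlling this $\beta_\epsilon'$-term uniformly in $\epsilon$, since $\beta_\epsilon'$ blows up as $\epsilon \to 0^+$. The purpose of the correction $-\alpha u^\epsilon$ in $\phi$ is to generate a positive $\alpha\beta_\epsilon(H^\varrho(Du^\epsilon))$ term in the second order inequality: indeed $-\alpha a^{ij}u^\epsilon_{x_ix_j}$ equals $-\alpha F^\varrho(D^2 u^\epsilon) + O(\alpha\varrho)$ by Lemma \ref{HomogeneityLEM}, which by \eqref{PenalizedEqn} equals $\alpha\beta_\epsilon(H^\varrho(Du^\epsilon)) - \alpha f^\varrho + \alpha\delta u^\epsilon + O(\alpha\varrho)$. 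Substituting the first order condition to eliminate $\nabla v$ from the $\beta_\epsilon'$-term, I then invoke the coercivity estimate $DH^\varrho(p)\cdot p \ge H^\varrho(p) - H^\varrho(0) + \tfrac{\sigma}{2}|p|^2$ from \eqref{Coercive} together with the convexity inequality $\beta_\epsilon'(z)\,z \ge \beta_\epsilon(z)$, valid because $\beta_\epsilon$ is convex with $\beta_\epsilon(0) = 0$. These two facts allow $\alpha\beta_\epsilon(H^\varrho(Du^\epsilon))$ to absorb the penalty contribution once $\alpha$ is chosen sufficiently large relative to $\Pi$ and $|Du^\epsilon(x_0)|$ exceeds a threshold depending on $\Pi$.

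Combining everything, using the growth bound $|DH^\varrho(p)| \le |DH^\varrho(0)| + \sqrt{n}\Sigma|p|$ from \eqref{Coercive} to control ancillary factors and Cauchy--Schwarz to absorb $|D^2 u^\epsilon||Du^\epsilon|$-type cross terms into $\theta\zeta^2|D^2 u^\epsilon|^2$, each remaining term is bounded by a constant depending on $\Pi$ and $|\zeta|_{W^{2,\infty}(B)}$ times $1+v$. This forces $\zeta^2(x_0)\,v(x_0) \le C$, and the conclusion $\zeta(x)^2|Du^\epsilon(x)|^2 \le C'$ on all of $B$ follows from the maximizing property of $x_0$, after adjusting the constant to absorb the $-\alpha u^\epsilon$ correction via the uniform $L^\infty$ bound on $u^\epsilon$ already established above.
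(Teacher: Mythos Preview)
Your overall strategy—Bernstein's method with the auxiliary function $\phi=\zeta^2|Du^\epsilon|^2-\alpha u^\epsilon$, differentiating the penalized equation, invoking Lemma~\ref{HomogeneityLEM} to convert $-\alpha a^{ij}u^\epsilon_{x_ix_j}$ into $\alpha\beta_\epsilon$ plus lower order terms, and exploiting the coercivity~\eqref{Coercive}—is precisely the paper's. However, your claim that $\alpha$ can be chosen depending only on $\Pi$ does not close the argument.

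The obstacle is the cubic term produced when you substitute the first-order condition into the penalty term. With $v=\tfrac12|Du^\epsilon|^2$, the relation $D\phi(x_0)=0$ gives (up to harmless constants) $\zeta^2\nabla v=\alpha Du^\epsilon-2\zeta v\,\nabla\zeta$, so $-\zeta^2\beta'_\epsilon\,DH^\varrho(Du^\epsilon)\cdot\nabla v$ splits into the favorable piece $-\alpha\beta'_\epsilon\,DH^\varrho\cdot Du^\epsilon$ that you handle via coercivity, \emph{and} the piece $2\zeta v\,\beta'_\epsilon\,DH^\varrho(Du^\epsilon)\cdot\nabla\zeta$. Since $|DH^\varrho(p)|$ grows linearly in $|p|$, this second contribution is of order $\beta'_\epsilon\,\zeta|Du^\epsilon|^3$. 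The only term carrying the same $\beta'_\epsilon$ factor that could absorb it is $\alpha\sigma\beta'_\epsilon|Du^\epsilon|^2$, which requires $\alpha\gtrsim\zeta(x_0)|Du^\epsilon(x_0)|$; the term $\alpha\beta_\epsilon$ does not help either, since $\beta_\epsilon\le\beta'_\epsilon H^\varrho(Du^\epsilon)\lesssim\beta'_\epsilon|Du^\epsilon|^2$. So a fixed $\alpha$ cannot dominate the cubic for large $|Du^\epsilon(x_0)|$.

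The paper's remedy is to set $M_\epsilon:=\sup_B\zeta|Du^\epsilon|$ and choose $\alpha=\alpha_\epsilon:=\tfrac{2C_0}{\sigma}M_\epsilon$. Then $\sigma\alpha\ge 2C_0\zeta(x_0)|Du^\epsilon(x_0)|$, the cubic is absorbed, and one deduces $\zeta(x_0)|Du^\epsilon(x_0)|\le C(1+\alpha)^{1/3}$, hence $\sup\phi\le C(1+\alpha)$ in every case. This yields the self-referential inequality $M_\epsilon^2\le C\bigl(1+\tfrac{2C_0}{\sigma}M_\epsilon\bigr)$, which bounds $M_\epsilon$ independently of $\epsilon$. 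Your final absorption step (``adjusting the constant to absorb the $-\alpha u^\epsilon$ correction via the uniform $L^\infty$ bound'') presupposes $\alpha$ bounded a priori, which is exactly what has to be proved—hence the need for this bootstrap.

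A minor sign issue: with the paper's convention~\eqref{Fassump} the coefficients $a^{ij}=F^\varrho_{M_{ij}}$ satisfy $-\Theta I_n\le[a^{ij}]\le-\theta I_n$, so at an interior maximum $a^{ij}\phi_{x_ix_j}(x_0)\ge0$, not $\le0$.
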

\begin{proof} 1. Set
$$
M_\epsilon:=\sup_{x\in B}|\zeta(x)Du^\epsilon(x)|
$$
and define
$$
v^\epsilon(x):=\frac{1}{2}\zeta^2(x)|Du^\epsilon(x)|^2 -  \alpha_\epsilon u^\epsilon(x).
$$
Here $\alpha_\epsilon$ is a positive constant that will be chosen below. We will first obtain a bound on $v^\epsilon$ from above and then 
use the resulting estimate to bound $M_\epsilon$.  We emphasize that each constant below will only depend on the list $\Pi$ and $|\zeta|_{W^{2,\infty}(B)}$; in particular, the constants will not depend on $\epsilon$ and $\alpha_\epsilon$. 
 
\par 2. We first differentiate equation \eqref{PenalizedEqn} with respect to $x_k$ $(k=1\dots, n)$ to get  
\begin{equation}\label{1stDerPenEqn}
\delta u^\epsilon_{x_k}+F^\varrho_{M_{ij}}(D^2u^\epsilon)u^\epsilon_{x_i x_j x_k} + \beta'_\epsilon(H^\varrho(Du^\epsilon))DH^\varrho(Du^\epsilon)\cdot Du^\epsilon_{x_k}=f^\varrho_{x_k}. 
\end{equation}
We suppress $\epsilon, \varrho$ dependence and function arguments and use \eqref{1stDerPenEqn} to compute
\begin{align}\label{BernIdentity1}
F_{M_{ij}}v_{x_i x_j}+ \beta' H_{p_k}v_{x_k}&=\left(F_{M_{ij}}\zeta_{x_i}\zeta_{x_j} + \zeta F_{M_{ij}}\zeta_{x_i x_j}\right)|Du|^2  + \nonumber \\
& \quad\quad 4F_{M_{ij}}\zeta\zeta_{x_i} Du\cdot Du_{x_j}  +\zeta^2 F_{M_{ij}}Du_{x_i}\cdot Du_{x_j}\nonumber \\
&\quad\quad - \beta' H_{p_k}(\alpha u_{x_k}- \zeta\zeta_{x_k}|Du|^2) \nonumber \\
& \quad\quad +\zeta^2 u_{x_k}(f_{x_k}-\delta u_{x_k}) - \alpha F_{M_{ij}}u_{x_i x_j}.
\end{align}
We reiterate that in \eqref{BernIdentity1}, we have written
$u$ for $u^\epsilon$, $v$ for $v^\epsilon$, $F$ for $F^\varrho(D^2u^\epsilon)$, $\beta$ for $\beta_\epsilon(H^\varrho(Du^\epsilon))$, $H$ for $H^\varrho(Du^\epsilon)$ and $f$ for $f^\varrho$.  We will continue this convention for the remainder of this proof. 

\par 3. Now we recall Lemma \ref{HomogeneityLEM}. In particular, the inequality \eqref{FrhoAlmostHomo} along with the convexity of $\beta=\beta_\epsilon$ implies
\begin{align*}
-F_{M_{ij}}u_{x_i x_j} &:=-F_{M_{ij}}(D^2u)(D^2u)_{ij}\\
& \le -F(D^2u)+\sqrt{n}\Theta\varrho_1\\
&= \beta(H(Du))+\delta u -f+\sqrt{n}\Theta\varrho_1\\
&\le H(Du)\beta'(H(Du))+\delta u -f+\sqrt{n}\Theta\varrho_1.
\end{align*}
Combining with \eqref{BernIdentity1} gives
\begin{align}\label{BernIdentity2}
F_{M_{ij}}v_{x_i x_j}+ \beta' H_{p_k}v_{x_k}&\le \left(F_{M_{ij}}\zeta_{x_i}\zeta_{x_j} + \zeta F_{M_{ij}}\zeta_{x_i x_j}\right)|Du|^2  + \nonumber \\
& \quad\quad 4F_{M_{ij}}\zeta\zeta_{x_i} Du\cdot Du_{x_j}  +\zeta^2 F_{M_{ij}}Du_{x_i}\cdot Du_{x_j}\nonumber \\
&\quad\quad - \beta' (\alpha(H_{p_k}u_{x_k}-H)-\zeta H_{p_k}\zeta_{x_k}|Du|^2) \nonumber \\
& \quad\quad +\zeta^2 u_{x_k}(f_{x_k}-\delta u_{x_k}) +\alpha(\delta u -f+\sqrt{n}\Theta\varrho_1).
\end{align}
\par 3. Assume $x_0\in \overline{B}$ is a maximizing point for $v$. If $x_0\in \partial B$, then $v\le -\alpha u_{\delta,\varrho}(x_0)\le -\alpha \inf_{\R^n} f$. Therefore, 
\begin{equation}\label{v1Upp}
v\le   C (\alpha+1).
\end{equation}
Alternatively, suppose $x_0\in B.$ If $\beta'=\beta'(H(Du(x_0)))\le 1<1/\epsilon$, then $H(Du(x_0))\le 2\epsilon\le 2$. By \eqref{Coercive}, 
$|Du(x_0)|$ is bounded from above independently of $\epsilon$. Hence, the \eqref{v1Upp} holds for an appropriate constant $C$.  The final situation to consider is when $\beta'=\beta'(H(Du(x_0)))>1$.   

\par Recall the uniform ellipticity assumption gives
$$
\eta^2 F_{M_{ij}}Du_{x_i}\cdot Du_{x_j}\le -\zeta^2\theta |D^2u|^2.
$$
And employing necessary conditions $Dv(x_0)=0$ and $D^2v(x_0)\le 0$ and the Cauchy-Schwarz inequality to the term 
$4F_{M_{ij}}\eta\eta_{x_i} Du\cdot Du_{x_j}\le (\zeta|D^2u|) (C|D\zeta||Du|)$ allow us to evaluate  \eqref{BernIdentity2} at
the point $x_0$ to get 
\begin{align*}
0 & \le C(|Du|^2+1+\alpha) - \beta' (\alpha(H_{p_k}u_{x_k}-H)-\zeta H_{p_k}\zeta_{x_k}|Du|^2)\\
& \le C(|Du|^2+1+\alpha) - \beta' (\sigma\alpha |Du|^2- C_0 (1+\zeta|Du|)|Du|^2)\\
& \le C\beta'\left\{|Du|^2+1+\alpha - \sigma\alpha |Du|^2+C_0 (1+\zeta|Du|)|Du|^2\right\}.
\end{align*}
 After multiplying through by $\zeta=\zeta(x_0)^2$ we have 
\begin{equation}\label{betaprimeINeq}
0\le C\beta'\left\{(\zeta|Du|)^2+1+\alpha - \sigma\alpha (\zeta |Du|)^2+C_0 (1+\zeta|Du|)(\zeta |Du|)^2\right\}
\end{equation}
which of course holds at $x_0$.

\par We now choose
$$
\alpha:=\frac{2C_0}{\sigma}M_\epsilon.
$$
Note $\sigma\alpha \ge 2C_0 \zeta(x_0)|Du(x_0)|$ and so \eqref{betaprimeINeq} gives
$$
0\le C\beta'\left\{(\zeta|Du|)^2+1+\alpha - 2C_0 (\zeta |Du|)^3+C_0 (1+\zeta|Du|)(\zeta |Du|)^2\right\}.
$$
As $\beta'>1$, the expression in the parentheses is necessarily nonnegative. It follows that there is constant $C$ such that 
$$
\zeta(x_0) |Du(x_0)|\le C(1+\alpha)^{1/3}. 
$$
As a result, \eqref{v1Upp} holds for another appropriately chosen constant $C$.  
\par 4. Therefore, 
$$
M_\epsilon^2=\sup_{B}|\zeta Du^\epsilon|^2 =2\sup_{B}(v^\epsilon + \alpha_\epsilon u^\epsilon)\le C(\alpha_\epsilon+1)
\le C\left(\frac{2C_0}{\sigma}M_\epsilon+1\right). 
$$
Consequently, $M_\epsilon$ is bounded above independently of $\epsilon\in (0,1)$. 
\end{proof}
Next we assert that $\beta_\epsilon(H^\varrho(Du^\epsilon))$ is locally bounded, independently of all $\epsilon$ sufficiently small. 
% Bounding Beta
\begin{lem}\label{betaboundLem}
Let $\delta\in(0,1)$, $\varrho\in (0,\varrho_1)$, $\epsilon\in (0,1)$ and suppose $\zeta\in C^\infty_c(B)$ is nonnegative.  There is a constant $C$ depending only on the list $\Pi$ and $|\zeta|_{W^{2,\infty}(B)}$ such 
that 
$$
\zeta(x)\beta_\epsilon(H^\varrho(Du^\epsilon(x)))\le C, \quad x\in B.  
$$
\end{lem}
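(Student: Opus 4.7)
The plan is to adapt the Bernstein-type maximum-principle argument of the preceding lemma, with the coercive term now supplied by the uniform convexity of $H^\varrho$. I take
$$w^\epsilon(x) := \zeta(x)^2\, \beta_\epsilon(H^\varrho(Du^\epsilon(x))),$$
and work at a point $x_0 \in B$ where $w^\epsilon$ attains its maximum. Since $\zeta$ has compact support in $B$, any positive max is interior. If $H^\varrho(Du^\epsilon(x_0)) \le 0$ then $\beta_\epsilon = 0$ at $x_0$ and there is nothing to prove; otherwise $\beta := \beta_\epsilon(H^\varrho(Du^\epsilon(x_0))) > 0$ and $\beta' := \beta_\epsilon'(H^\varrho(Du^\epsilon(x_0))) > 0$.

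Consider the linearized operator
$$\mathcal{L} v := F^\varrho_{M_{ij}}(D^2 u^\epsilon)\, v_{x_i x_j} + \beta'_\epsilon(H^\varrho(Du^\epsilon))\, H^\varrho_{p_l}(Du^\epsilon)\, v_{x_l}.$$
The matrix $(F^\varrho_{M_{ij}})$ is symmetric and negative-definite with eigenvalues in $[-\Theta,-\theta]$, so at the interior max $\mathcal{L} w^\epsilon(x_0) \ge 0$. By the product rule
$$\mathcal{L}(\zeta^2 \beta) = \mathcal{L}(\zeta^2)\,\beta + \zeta^2 \mathcal{L}(\beta) + 2 F^\varrho_{M_{ij}} (\zeta^2)_{x_i} \beta_{x_j}.$$
The core computation is $\mathcal{L}(\beta)$. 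Using $\beta_{x_k} = \beta' H^\varrho_{p_l} u^\epsilon_{x_l x_k}$ and the analogous expression for $\beta_{x_i x_j}$, the third-derivative contribution $\beta' F^\varrho_{ij} H^\varrho_{p_l} u_{lij}$ combines with $\beta' H^\varrho_{p_l} \beta_{x_l}$ via the once-differentiated PDE
$$F^\varrho_{M_{ij}} u^\epsilon_{x_i x_j x_k} = f^\varrho_{x_k} - \delta u^\epsilon_{x_k} - \beta' H^\varrho_{p_l} u^\epsilon_{x_l x_k},$$
and the $(\beta')^2$ Hessian term cancels cleanly. After discarding the nonpositive $\beta''$ piece, uniform ellipticity of $F^\varrho$ together with uniform convexity of $H^\varrho$ (via $F^\varrho_{ij} H^\varrho_{p_l p_m} u_{li} u_{mj} \le -\theta\sigma|D^2 u^\epsilon|^2$) yield
$$\mathcal{L}(\beta) \le -\theta\sigma\, \beta'\, |D^2 u^\epsilon|^2 + \beta'\bigl(H^\varrho_p \cdot Df^\varrho - \delta\, H^\varrho_p \cdot Du^\epsilon\bigr).$$

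At $x_0$ the condition $Dw^\epsilon(x_0) = 0$ reduces the cross term to $-8\beta\, F^\varrho_{M_{ij}} \zeta_{x_i} \zeta_{x_j}$, which is bounded by $C\beta$; and, using the previous lemma to bound $|Du^\epsilon|$ and hence $|H^\varrho_p(Du^\epsilon)|$ on $\operatorname{supp}\zeta$, one finds $|\mathcal{L}(\zeta^2)\beta| \le C\beta + C\zeta\beta\beta'$. Collecting everything and dividing by $\beta' > 0$, the convexity estimate $\beta/\beta' \le H^\varrho(Du^\epsilon) \le C$ gives
$$\zeta^2 |D^2 u^\epsilon|^2 \le C(1 + \zeta\beta) \quad \text{at } x_0.$$
Inserting this into the PDE identity $\beta = f^\varrho - \delta u^\epsilon - F^\varrho(D^2 u^\epsilon)$, combined with the Lipschitz bound $|F^\varrho(M)| \le C(1 + |M|)$, produces a quadratic inequality of the form $(\zeta\beta(x_0))^2 \le C(1 + \zeta\beta(x_0))$, which forces $\zeta\beta(x_0) \le C$. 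Hence $w^\epsilon(x_0) = \zeta(x_0) \cdot \zeta\beta(x_0) \le C$, so $\zeta^2 \beta \le C$ on $B$; applying this argument to an enlarged cutoff $\tilde\zeta \in C^\infty_c(B)$ equal to $1$ on $\operatorname{supp}\zeta$ yields $\beta \le C$ on $\operatorname{supp}\zeta$, and the stated bound on $\zeta\beta$ follows.

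The principal difficulty is the bookkeeping in $\mathcal{L}(\beta)$: securing the $(\beta')^2$ third-derivative cancellation and extracting the coercive term $-\theta\sigma\beta'|D^2 u^\epsilon|^2$ from $F^\varrho_{ij} H^\varrho_{p_l p_m} u_{li} u_{mj}$ by the simultaneous use of uniform ellipticity and uniform convexity of $H^\varrho$. Beyond this, vigilance is required to check that every constant depends only on the list $\Pi$ and $|\zeta|_{W^{2,\infty}}$ and not on $\epsilon$, $\delta$, or $\varrho$, which is precisely why the preceding lemma's $\epsilon$-uniform gradient bound, rather than a mere pointwise bound on $|Du^\epsilon|$, is essential.
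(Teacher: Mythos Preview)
The paper does not actually prove this lemma: it simply writes ``We omit a proof of Lemma~\ref{betaboundLem} as the proof of Lemma 3.3 in our recent work \cite{HyndMawi} immediately applies here.'' Your proposal is exactly the kind of Bernstein--maximum-principle argument that Lemma 3.3 of \cite{HyndMawi} contains: one applies the linearized operator to $\zeta^2\beta_\epsilon(H^\varrho(Du^\epsilon))$, eliminates third derivatives via the once-differentiated PDE, extracts the coercive term $-\theta\sigma\beta'|D^2u^\epsilon|^2$ from $F^\varrho_{M_{ij}}H^\varrho_{p_lp_m}u_{li}u_{mj}$ using uniform ellipticity together with uniform convexity of $H^\varrho$, and then closes via the equation $\beta=f^\varrho-\delta u^\epsilon-F^\varrho(D^2u^\epsilon)$. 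The computations you outline (the $(\beta')^2$ cancellation, the sign of the $\beta''$ term, the cross-term reduction at a critical point, and $\beta/\beta'\le H^\varrho(Du^\epsilon)$ by convexity of $\beta_\epsilon$) are all correct.

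One small point of honesty: when you invoke the previous lemma to bound $|Du^\epsilon|$ (and hence $H^\varrho(Du^\epsilon)$ and $|DH^\varrho(Du^\epsilon)|$) on $\operatorname{supp}\zeta$, you are implicitly already using an enlarged cutoff $\tilde\zeta$, since that lemma only gives $\zeta|Du^\epsilon|\le C$, not $|Du^\epsilon|\le C$ on $\operatorname{supp}\zeta$. You acknowledge this at the end, but it means the final constant depends not only on $|\zeta|_{W^{2,\infty}(B)}$ but also on $\operatorname{dist}(\operatorname{supp}\zeta,\partial B)$ through $|\tilde\zeta|_{W^{2,\infty}}$. This is harmless for the paper's purposes---the very next lemma makes exactly this dependence explicit---but strictly speaking it is a touch more than the lemma statement promises.
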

We omit a proof of Lemma \ref{betaboundLem} as the proof of Lemma 3.3 in our recent work \cite{HyndMawi} immediately applies here.  We also note that 
$$
F^\varrho(D^2u^\epsilon)=-\beta_\epsilon(H^\varrho(Du^\epsilon)) +f^\varrho-\delta u^\epsilon
$$
is locally bounded, independently of $\epsilon \in (0,1]$. By the $W^{2,p}_{\text{loc}}$ estimates for fully nonlinear elliptic equations due to L. Caffarelli (Theorem 1 
in \cite{CaffAnn}, Theorem 7.1 in \cite{CC}), we have the following. 
% W2p bounds
\begin{lem}\label{W2pBoundUeps}
Let $\delta\in(0,1)$, $\varrho\in (0,\varrho_1)$, $\epsilon\in (0,1)$, $p\in (n,\infty)$, and assume $G\subset B$ is open with $\overline{G}\subset B$. There is a constant $C$ depending on $p$, the list $\Pi$, $1/\dist(\partial G,B)$ and $G$ such that 
$$
|D^2u^\epsilon|_{L^p(G)}\le C\left\{|u^\epsilon|_{L^\infty(B)}+1\right\}. 
$$
\end{lem}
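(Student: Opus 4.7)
The plan is to recast the penalized equation \eqref{PenalizedEqn} as a uniformly elliptic, concave, fully nonlinear equation with a locally bounded right-hand side, and then apply Caffarelli's interior $W^{2,p}$ estimate. Writing
$$
F^\varrho(D^2 u^\epsilon) = g^\epsilon, \qquad g^\epsilon := f^\varrho - \delta u^\epsilon - \beta_\epsilon(H^\varrho(Du^\epsilon)),
$$
I observe that $F^\varrho$ is uniformly elliptic with the same constants $\theta,\Theta$ as $F$ (standard mollification preserves pointwise ellipticity bounds) and is concave, since $F$ is concave (being positively homogeneous and superadditive by \eqref{Fassump}) and concavity is preserved by convolution with a nonnegative kernel.

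Next I would obtain a local $L^\infty$ bound on $g^\epsilon$ that is uniform in $\epsilon \in (0,1)$ and $\varrho \in (0,\varrho_1)$. Fix an intermediate open set $G'$ with $\overline{G}\subset G'\subset \overline{G'}\subset B$, and choose a cutoff $\zeta\in C^\infty_c(B)$ with $\zeta\equiv 1$ on $G'$ and $|\zeta|_{W^{2,\infty}(B)}$ controlled by $\dist(\partial G',\partial B)^{-2}$. Lemma \ref{betaboundLem} applied to this $\zeta$ yields $\beta_\epsilon(H^\varrho(Du^\epsilon))\le C$ on $G'$, with $C$ depending only on $\Pi$ and the geometry of $G'\subset B$. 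Combining with $\delta \le 1$ and the uniform $W^{1,\infty}$ bound on $f^\varrho$ registered in the definition of $\Pi$ gives
$$
|g^\epsilon|_{L^\infty(G')} \le C_1\bigl(|u^\epsilon|_{L^\infty(B)}+1\bigr).
$$

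Finally, apply the Caffarelli interior $W^{2,p}$ estimate (Theorem 7.1 in \cite{CC}, Theorem 1 in \cite{CaffAnn}) to $u^\epsilon$ as a classical solution of the concave, uniformly elliptic equation $F^\varrho(D^2 u^\epsilon)=g^\epsilon$ on $G'$. For $p\in(n,\infty)$ this produces
$$
|D^2 u^\epsilon|_{L^p(G)} \le C_2\bigl(|u^\epsilon|_{L^\infty(G')}+|g^\epsilon|_{L^p(G')}\bigr),
$$
with $C_2$ depending on $p$, $n$, $\theta$, $\Theta$ and $\dist(\partial G,\partial G')$. Combining this with the $L^\infty$ bound on $g^\epsilon$ just derived (and with $|u^\epsilon|_{L^\infty(G')}\le |u^\epsilon|_{L^\infty(B)}$) yields the claimed estimate.

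The only real obstacle is verifying that Caffarelli's theorem delivers a constant that is independent of the mollification parameter $\varrho$. This follows because the constants in the $W^{2,p}$ estimate depend only on the uniform ellipticity bounds, the dimension, the exponent $p$, and (through $F^\varrho$) on a structural condition that is either convexity or concavity of the operator — all of which are satisfied uniformly in $\varrho$. A minor bookkeeping issue is that Caffarelli's theorem is often stated for solutions of $F(D^2u)=0$; the inhomogeneous version with $g^\epsilon\in L^p$ is precisely what is recorded as Theorem 7.1 in \cite{CC}, so no additional argument is needed.
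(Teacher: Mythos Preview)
Your proposal is correct and follows essentially the same approach as the paper: bound the penalty term locally via Lemma~\ref{betaboundLem}, rewrite the equation as $F^\varrho(D^2u^\epsilon)=g^\epsilon$ with $g^\epsilon$ locally bounded, and invoke Caffarelli's interior $W^{2,p}$ estimate for concave uniformly elliptic operators. The only cosmetic difference is that the paper applies Theorem~7.1 of \cite{CC} on small balls $B_{r/4}(x_0)\subset B_{r/2}(x_0)$ and then covers $\overline{G}$ by finitely many such balls, whereas you pass directly through a single intermediate domain $G'$; the content is identical.
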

\begin{proof}
Assume $B_r(x_0)\subset B$ is nonempty, and choose $\zeta\in C^\I_c(B_r(x_0))$ such that $0\le \zeta\le 1$, $\zeta\equiv 1$ on $B_{r/2}(x_0)$ and 
\begin{equation}\label{DzetaBounds}
|D\zeta|_{L^\I(B_{r/2}(x_0))}\le \frac{C}{r}, \quad  |D^2\zeta|_{L^\I(B_{r/2}(x_0))}\le \frac{C}{r^2}.
\end{equation}
From Lemma \ref{betaboundLem},  $\beta_\epsilon(H^\varrho(Du^\epsilon(x)))\le C_1$ for $x\in B_{r/2}(x_0)$ for some $C_1$ depending only on the list $\Pi$ and $r$. By the assumption that $F$ is uniformly elliptic and concave, Theorem 7.1 in \cite{CC} implies there is a universal constant $c_0$ such 
that
\begin{align*}
r^2|D^2u^\epsilon|_{L^p(B_{r/4}(x_0))}&\le c_0\left\{|u^\epsilon|_{L^\I(B_{r/2}(x_0))} + |f^\varrho-\delta u^\epsilon - \beta_\epsilon(H^\varrho(Du^\epsilon))|_{L^\I(B_{r/2}(x_0))}\right\}\\
&\le  c_0\left\{|u^\epsilon|_{L^\I(B_{r/2}(x_0))} + |f^\varrho-\delta u^\epsilon|_{L^\I(B_{r/2}(x_0))}+C_1\right\}\\
&\le C_0\left\{|u^\epsilon|_{L^\infty(B)} + |f^\varrho|_{L^\I(B)}+C_1 \right\}.
\end{align*}
Here $C_0$ only depends only on the list $\Pi$.

\par Now select $r=\frac{1}{2}\dist(\partial G, B)$ and cover $\overline{G}$ with finitely many balls $B_{r/4}(x_1), \dots, B_{r/4}(x_m)$, with each $x_1,\dots,x_m\in G$. Then 
\begin{align*}
\int_G|D^2u^\epsilon(x)|^pdx&\le \int_{\cup^m_{i=1}B_{r/4}(x_i)}|D^2u^\epsilon(x)|^pdx \\
&\le \sum^m_{i=1}\int_{B_{r/4}(x_i)}|D^2u^\epsilon(x)|^pdx \\
&\le mC_0^p\left(|u^\epsilon|_{L^\infty(B)} + |f^\varrho|_{L^\I(B)}+C_1\right)^p.
\end{align*}
\end{proof}
% Sending eps ---> 0
In view of our uniform estimates, we are in position to send $\epsilon\rightarrow 0^+$ in the equation \eqref{PenalizedEqn}. 
\begin{prop}
Let $\delta\in (0,1)$, $\varrho\in (0,\varrho_1)$, $p\in (n,\infty)$ and assume $G\subset B$ is open with $\overline{G}\subset B$. \\
(i) There is $v_{\delta,\varrho}\in C(\overline{B})\cap W^{2,p}_{\text{loc}}(B)$ such that
$u^\epsilon\rightarrow v_{\delta,\varrho}$, as $\epsilon\rightarrow 0^+$, uniformly in $\overline{B}$ and weakly in $W^{2,p}(G)$.  \\
(ii) Moreover, $v_{\delta,\varrho}$ is the unique solution of the boundary value problem
\begin{equation}\label{vdeltavarrhoPDE}
\begin{cases}
\max\{\delta v+F^\varrho(D^2v)-f^\varrho,H^\varrho(Dv)\}=0&\quad x\in B \\
\hspace{2.38in}v=u_{\delta,\varrho}& \quad x\in \partial B
\end{cases}.
\end{equation}
(iii) There is a constant $C$ depending on $p$, the list $\Pi$, $1/\dist(\partial G,B)$ and $G$ such that 
\begin{equation}\label{vdeltarhoEst1}
|D^2v_{\delta,\varrho}|_{L^p(G)}\le C\left\{|v_{\delta,\varrho}|_{L^\I(B)} +1\right\}
\end{equation}
and 
\begin{equation}\label{vdeltarhoEst2}
-C\le F^\varrho(D^2v_{\delta,\varrho}(x))
\end{equation}
for Lebesgue almost every $x\in G$.
\end{prop}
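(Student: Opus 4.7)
The plan is a compactness-plus-stability argument built on the uniform estimates already established: the $L^\infty$ bound, the interior gradient estimate, Lemma \ref{W2pBoundUeps}'s $W^{2,p}_{\text{loc}}$ bound, and the local boundedness of the penalty from Lemma \ref{betaboundLem}. Interior equicontinuity of $\{u^\epsilon\}$ is immediate from the $L^\infty$ and gradient bounds. To upgrade this to equicontinuity on all of $\overline{B}$---the delicate point---I would combine the built-in lower barrier $u_{\delta,\varrho}\le u^\epsilon$ with an $\epsilon$-independent upper barrier at each point $x_0\in\partial B$, obtained by localizing and adjusting the global supersolution from \eqref{uUpper}. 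This pins the modulus of continuity at $\partial B$, where all $u^\epsilon$ coincide with the fixed continuous function $u_{\delta,\varrho}$; Arzel\`a--Ascoli then produces a subsequence $u^{\epsilon_k}\to v_{\delta,\varrho}$ uniformly on $\overline{B}$ with $v_{\delta,\varrho}\in C(\overline{B})$, while Lemma \ref{W2pBoundUeps} together with a diagonal extraction gives weak $W^{2,p}(G)$-convergence for every $G\Subset B$, producing $(i)$.

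For $(ii)$, the compact embedding $W^{2,p}(G)\hookrightarrow C^{1,\alpha}(\overline{G})$ for $p>n$ upgrades the subsequential convergence to $u^{\epsilon_k}\to v_{\delta,\varrho}$ in $C^{1,\alpha}_{\text{loc}}(B)$. Since $\beta_\epsilon(z)=(z-\epsilon)/\epsilon$ for $z\ge 2\epsilon$, the local bound on $\beta_\epsilon(H^\varrho(Du^\epsilon))$ from Lemma \ref{betaboundLem} forces $H^\varrho(Du^{\epsilon_k})\le \epsilon_k(1+C)\to 0$ locally, whence $H^\varrho(Dv_{\delta,\varrho})\le 0$ on $B$. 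The remaining max-PDE is obtained by a standard viscosity test-function argument applied to \eqref{PenalizedEqn}: when $v_{\delta,\varrho}-\phi$ attains a strict local extremum at $x_0\in B$ for a smooth $\phi$, $u^{\epsilon_k}-\phi$ attains nearby extrema $x_k\to x_0$ at which $Du^{\epsilon_k}(x_k)=D\phi(x_k)$, so Lemma \ref{betaboundLem} controls $\beta_{\epsilon_k}(H^\varrho(D\phi(x_k)))$; combining with the pointwise equality \eqref{PenalizedEqn} at $x_k$ and passing to the limit yields both the subsolution and supersolution inequalities for the max-PDE. Uniqueness for \eqref{vdeltavarrhoPDE} then follows from the viscosity comparison principle on the bounded domain $B$ with continuous Dirichlet data, mirroring Proposition \ref{lamCompProp} but without the growth hypothesis.

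For $(iii)$, the $L^p$ estimate \eqref{vdeltarhoEst1} is inherited by weak lower semicontinuity of the $L^p$ norm from Lemma \ref{W2pBoundUeps}. For the pointwise lower bound \eqref{vdeltarhoEst2}, rearranging the penalized equation and invoking Lemma \ref{betaboundLem} gives $F^\varrho(D^2 u^{\epsilon_k}) = f^\varrho-\delta u^{\epsilon_k}-\beta_{\epsilon_k}(H^\varrho(Du^{\epsilon_k}))\ge -C$ on $G$, uniformly in $k$. Because $F^\varrho$ is concave, the superlevel set $\{M\in\Mn:F^\varrho(M)\ge -C\}$ is convex and closed, so Mazur's lemma applied to the weak convergence $D^2 u^{\epsilon_k}\rightharpoonup D^2 v_{\delta,\varrho}$ in $L^p(G)$ produces convex combinations converging strongly, hence almost everywhere along a further subsequence, and the convex constraint $F^\varrho(\cdot)\ge -C$ passes to the a.e.\ limit. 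The main obstacle I anticipate is the boundary barrier in $(i)$: the interior estimates come essentially for free from what has already been proved, but the construction of an $\epsilon$-uniform upper barrier at $\partial B$ tailored to the penalized equation---rather than to the unconstrained one---is the only step demanding genuinely new work.
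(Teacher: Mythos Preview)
Your proposal is correct and follows essentially the same route as the paper. The paper in fact omits the details of $(i)$--$(ii)$ entirely, deferring to \cite{Hynd} and \cite{HyndMawi}, so your outline---compactness from the uniform $L^\infty$, gradient, and $W^{2,p}$ estimates; boundary barriers for equicontinuity up to $\partial B$; viscosity stability for the limit PDE; comparison for uniqueness and hence full-sequence convergence---is precisely the content those references supply, and your identification of the boundary barrier as the one genuinely new ingredient is accurate.

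For $(iii)$ both arguments exploit the concavity of $F^\varrho$ to pass the pointwise lower bound through weak $W^{2,p}$ convergence, but the packaging differs slightly. The paper uses that the functional $w\mapsto\int_B F^\varrho(w)\zeta\,dx$ is weakly upper semicontinuous (being concave and strongly continuous), obtains $\int_B F^\varrho(D^2v_{\delta,\varrho})\zeta\,dx\ge -C$, and then localizes the test function $\zeta$ via a covering argument. You instead observe directly that the set $\{w\in L^p(G;\Mn):F^\varrho(w(\cdot))\ge -C\text{ a.e.}\}$ is convex and strongly closed, hence weakly closed, so the a.e.\ bound survives the weak limit in one stroke. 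These are two sides of the same coin; your formulation is arguably cleaner and avoids the localization step.
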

\begin{proof}
$(i)-(ii)$ The convergence to $v$ satisfying \eqref{vdeltavarrhoPDE} is proved very similar to Proposition 4.1 in \cite{Hynd} and part $(ii)$ of Theorem 1.1 in \cite{HyndMawi}, so we omit the details.  In both arguments, the uniqueness of solutions of a related boundary value problem of the type \eqref{vdeltavarrhoPDE} is crucial; in our case, uniqueness follows from the estimate \eqref{GenComparisonEst} below. 

\par $(iii)$  The bound \eqref{vdeltarhoEst1} follows from part $(i)$ and Lemma \ref{W2pBoundUeps}. Let us now verify   \eqref{vdeltarhoEst2}.   Recall that $F^\varrho$ is concave. As $u^\epsilon$ converges to $v_{\delta,\varrho}$ weakly $W^{2,p}_{\text{loc}}(B)$, for each $\zeta\in C^\I_c(B)$ that is nonnegative,
\begin{equation}\label{LimSupConcav}
\limsup_{\epsilon\rightarrow 0^+}\int_B F^\varrho(D^2u^\epsilon(x))\zeta(x)dx\le \int_B F^\varrho(D^2v_{\delta,\varrho}(x))\zeta(x)dx. 
\end{equation}
By Lemma \ref{betaboundLem}, there is a constant $C$ depending only the list $\Pi$ and $|\zeta|_{W^{2,\infty}(B)}$ such 
that $\zeta F^\varrho(D^2u^\epsilon)\ge -C$. Inequality \eqref{LimSupConcav} then gives 
\begin{equation}\label{vdeltarhoEst3}
-C\le \zeta(x) F^\varrho(D^2v_{\delta,\varrho}(x))
\end{equation}
for almost every $x\in B$.  Let $x_0\in G$ and $r:=\frac{1}{2}\dist(\partial G, B)$, and choose $0\le \zeta\le 1$ to be supported in $B_r(x_0)$ and satisfy $\zeta\equiv 1$ on $B_{r/2}(x_0)$ and \eqref{DzetaBounds}. Then \eqref{vdeltarhoEst3} implies that \eqref{vdeltarhoEst2} holds for almost every $x\in B_{r/2}(x_0)$ for some constant $C$ depending on $\Pi$ and $r$. The general
bound follows by a routine covering argument.
\end{proof}

% Sending varrho to 0
\begin{prop}\label{FFvarrholocLem} Let $\delta\in(0,1)$, $p\in (n,\infty)$ and assume $G\subset B$ is open with $\overline{G}\subset B$.\\ 
$(i)$ Then $v_{\delta,\varrho}\rightarrow u_\delta$, as $\varrho\rightarrow 0^+$, uniformly on $\overline{B}$ and weakly in $W^{2,p}(G)$. 
\\ 
$(ii)$ There is a constant $C$ depending on $p$, the list $\Pi$, $1/\dist(\partial G,B)$ and $G$ such that 
$$
-C\le F(D^2u_{\delta}(x))
$$
for almost every $x\in G$.
\end{prop}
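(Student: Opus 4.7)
The plan is to prove part $(i)$ by $\varrho$-uniform estimates and compactness, then deduce part $(ii)$ by weak lower semicontinuity. First I would establish $\varrho$-uniform control on $v_{\delta,\varrho}$: a uniform $L^\infty(\overline{B})$ bound follows by passing $\epsilon \to 0^+$ in the supremum estimate on $u^\epsilon$ derived just before Lemma \ref{betaboundLem}, which depends on $\varrho$ only through $|f^\varrho|_{L^\infty(B)}$ and is therefore $\varrho$-uniform. Since $v_{\delta,\varrho}$ satisfies $H^\varrho(Dv_{\delta,\varrho}) \le 0$ in $B$, the analogue of \eqref{UpperLowerH} applied to $H^\varrho$ (valid for $\varrho < \varrho_1$) yields a $\varrho$-uniform Lipschitz bound, hence equicontinuity on $\overline{B}$. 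Combined with \eqref{vdeltarhoEst1}, this gives a $\varrho$-uniform $W^{2,p}(G)$ bound.

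Next, for any sequence $\varrho_k \downarrow 0$, Arzela--Ascoli together with weak compactness in $W^{2,p}(G)$ yields a subsequence (not relabeled) along which $v_{\delta,\varrho_k}$ converges uniformly on $\overline{B}$ and weakly in $W^{2,p}(G)$ to some $v \in C(\overline{B}) \cap W^{2,p}_{\text{loc}}(B)$. By Lemma \ref{firstApproxLem}, $u_{\delta,\varrho_k} \to u_\delta$ uniformly on $\overline{B}$, so $v = u_\delta$ on $\partial B$. Because $F^\varrho \to F$ uniformly on bounded subsets of $\Mn$ (by \eqref{FFvarrhoEst}), and $H^\varrho \to H$, $f^\varrho \to f$ locally uniformly, the stability of viscosity solutions (Lemma 6.1 in \cite{CIL}) shows that $v$ solves
$$
\max\{\delta v + F(D^2 v) - f,\; H(Dv)\} = 0 \quad \text{in } B.
$$
A comparison principle for this proper degenerate elliptic PDE on the bounded domain $B$ (provable by a doubling-variables argument analogous to Proposition \ref{DeltaCompProp}, but simpler since no growth hypothesis is needed) then identifies $v = u_\delta$ on $\overline{B}$. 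Since the limit is unique and independent of the subsequence, the full family converges, establishing $(i)$.

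For part $(ii)$, observe that $F$ is concave, as an immediate consequence of the positive homogeneity and superadditivity in \eqref{Fassump}. Hence $M \mapsto -F(M)$ is convex, and the functional $w \mapsto \int_G (-F(D^2 w(x))) \zeta(x)\,dx$ is weakly lower semicontinuous on $W^{2,p}(G)$ for every nonnegative $\zeta \in C_c^\infty(G)$. Combining \eqref{FFvarrhoEst} with \eqref{vdeltarhoEst2},
$$
\int_G (-F(D^2 v_{\delta,\varrho}(x))) \zeta(x)\, dx \le \left(C + \sqrt{n}\,\Theta\,\varrho\right)\int_G \zeta(x)\, dx.
$$
Using $(i)$ to pass $\varrho \to 0^+$ and applying lower semicontinuity gives $\int_G (-F(D^2 u_\delta))\zeta \le C \int_G \zeta$ for every nonnegative $\zeta \in C_c^\infty(G)$, from which $F(D^2 u_\delta) \ge -C$ a.e.\ in $G$ follows at once.

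The main potential obstacle is justifying the bounded-domain comparison principle used in paragraph two to pin down $v = u_\delta$; however, since $\delta > 0$ renders the equation proper in the sense of \cite{CIL}, this is an essentially standard application of viscosity theory, and the paper already notes that an analogous step for the previous proposition proceeds along the same lines as in \cite{Hynd,HyndMawi}.
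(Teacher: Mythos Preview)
Your proof is correct, and part $(ii)$ is essentially identical to the paper's argument (the paper tests against characteristic functions of measurable sets rather than smooth $\zeta$, but this is immaterial). Part $(i)$, however, takes a genuinely different route. The paper bypasses compactness entirely and instead proves the quantitative sandwich
\[
u_{\delta,\varrho}(x)\le v_{\delta,\varrho}(x)\le u_{\delta,\varrho}(x)+\frac{\sqrt{n}\,\Theta\,\varrho}{\delta},\qquad x\in \overline{B},
\]
directly from a bounded-domain comparison estimate of the form
\[
\max_{\overline{B}}\{u-v\}\le \max_{\partial B}\{u-v\}+\frac{1}{\delta}\max_{\overline{B}}\{g-h\},
\]
applied twice using $F^\varrho\le F\le F^\varrho+\sqrt{n}\,\Theta\,\varrho$ from \eqref{FFvarrhoEst}. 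Since $u_{\delta,\varrho}\to u_\delta$ uniformly on $\overline{B}$ by Lemma~\ref{firstApproxLem}, uniform convergence of $v_{\delta,\varrho}$ is immediate, and the weak $W^{2,p}(G)$ convergence then follows from \eqref{vdeltarhoEst1} exactly as you have it. The paper's approach is shorter and gives an explicit rate in $\varrho$; your compactness-plus-stability argument is more indirect but has the advantage of not requiring the quantitative two-sided bound \eqref{FFvarrhoEst}, only the locally uniform convergence $F^\varrho\to F$.
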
 
\begin{proof}
$(i)$ We first claim
\begin{equation}\label{UdeltarhoVdeltarhoEst}
u_{\delta,\varrho}(x)\le v_{\delta,\varrho}(x)\le u_{\delta,\varrho}(x) +\frac{1}{\delta}\sqrt{n}\Theta\varrho
\end{equation}
for $x\in B$ and $\varrho\in (0,\varrho_1)$.  And in order to prove \eqref{UdeltarhoVdeltarhoEst}, we will need the estimate 
\begin{equation}\label{GenComparisonEst}
\max_{\overline{B}}\{u-v\}\le \max_{\partial B}\{u-v\}+\frac{1}{\delta}\max_{\overline{B}}\{g-h\}
\end{equation}
which holds for each $u\in USC(\overline{B})$ and $v\in LSC(\overline{B})$ that satisfy 
\begin{equation}\label{GenComparisonPDE}
\max\{\delta u+F(D^2u)-g,H^\varrho(Du)\}\le 0\le \max\{\delta v+F(D^2v)-h,H^\varrho(Dv)\}, \quad x\in B. 
\end{equation}
Here $g,h\in C(\overline{B})$. The estimate \eqref{GenComparisonEst} can be proved with the ideas 
used to verify Proposition \eqref{lamCompProp}; see also Proposition 2.2 of \cite{HyndMawi}.  We leave the details to the reader. 

\par Using $F^\varrho\le F$, the inequality $u_{\delta,\varrho}\le v_{\delta,\varrho}$ follows from \eqref{GenComparisonEst} as $u=u_{\delta,\varrho}$, $v=v_{\delta,\varrho}$ satisfy \eqref{GenComparisonPDE} $g=h=f^\varrho$.  Likewise, we can use the bound 
$F^\varrho+\sqrt{n}\Theta \varrho$ to show the inequality $v_{\delta,\varrho}\le u_{\delta,\varrho} + \sqrt{n}\Theta \varrho/\delta$ follows from \eqref{GenComparisonEst} as $u=v_{\delta,\varrho}$, $v=u_{\delta,\varrho}$ satisfy \eqref{GenComparisonPDE} with $g=f^\varrho+\frac{1}{\delta}\sqrt{n}\Theta\varrho$ and $h=f^\varrho$.  The assertion that 
$v_{\delta,\varrho}$ converges to $u_\delta$ in $W^{2,p}(G)$ weakly follows from \eqref{vdeltarhoEst1}. 

\par $(ii)$ Let $U\subset G$ be measurable and recall that $F^\varrho\le F$ and $F$ is concave.  By \eqref{vdeltarhoEst2}, we have there is a constant $C$ depending on $p$, the list $\Pi$, $1/\dist(\partial G,B)$ and $G$ such that
\begin{align*}
-C|U|&\le \limsup_{\varrho \rightarrow 0^+}\int_{U}F^\varrho(D^2v_{\delta,\varrho}(x))dx \\
&\le \limsup_{\varrho \rightarrow 0^+}\int_{U}F(D^2v_{\delta,\varrho}(x))dx \\
&\le \int_{U}F(D^2u_{\delta}(x))dx.
\end{align*} 
\end{proof}

% W2inf estimate on u_\delta
\begin{cor}\label{W2infuDel} For each $\delta\in (0,1)$, $D^2u_\delta\in L^\I(\R^n;\Sn)$. Moreover, there is a constant $C$ depending only on 
the list $\Pi$ for which
$$
|D^2u_\delta|_{ L^\I(\R^n;\Sn)}\le C.
$$
for each $\delta\in(0,1)$.
\end{cor}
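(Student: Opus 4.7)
The plan is to combine the two second-derivative estimates already established: the interior bound from Proposition \ref{FFvarrholocLem}$(ii)$, which controls $F(D^2 u_\delta)$ from below on precompact subsets of a fixed ball, and the far-field bound from Lemma \ref{W2infFarOutBound}, which controls $D^2 u_\delta$ away from $\overline{\Omega}_\delta$. Two facts make the combination work: Corollary \ref{boundedDerOmega} produces a single radius $R$, independent of $\delta \in (0,1)$, such that $\Omega_\delta \subset B_R(0)$; and Proposition \ref{UdelConvex} ensures that $u_\delta$ is convex, so $D^2 u_\delta(x) \ge 0$ at every point where $u_\delta$ is twice differentiable (and by Aleksandrov's theorem, this is almost every $x$).

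First I would fix $B := B_{R+2}(0)$ and $G := B_{R+1}(0)$, so that $\overline{G} \subset B$, $\Omega_\delta \subset G$ for every $\delta \in (0,1)$, and $\operatorname{dist}(\partial G, B) = 1$. Proposition \ref{FFvarrholocLem}$(ii)$, applied to this specific choice of $G$ and $B$, yields a constant $C$ depending only on the list $\Pi$ such that
\begin{equation*}
F(D^2 u_\delta(x)) \ge -C \qquad \text{for a.e. } x \in G.
\end{equation*}
The dependence on $1/\operatorname{dist}(\partial G, B)$ and on $G$ collapses into $\Pi$ because both quantities are fixed once $R$ has been fixed.

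Second, I use the structural condition on $F$ together with convexity. Positive homogeneity gives $F(0) = 0$, so applying \eqref{Fassump} with $M = 0$ and $N = D^2 u_\delta(x) \ge 0$ produces
\begin{equation*}
\theta \operatorname{tr}(D^2 u_\delta(x)) \le -F(D^2 u_\delta(x)) \le C
\end{equation*}
at a.e. $x \in G$. Since $D^2 u_\delta(x)$ is symmetric positive semidefinite with trace bounded by $C/\theta$, each of its eigenvalues lies in $[0, C/\theta]$; this bounds its operator norm a.e. on $G$.

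Finally, on $\R^n \setminus G$ I invoke Lemma \ref{W2infFarOutBound}: if $|x| \ge R+1$ and $\Omega_\delta \subset B_R(0)$, then $\operatorname{dist}(x, \Omega_\delta) \ge |x| - R \ge 1$, so $D^2 u_\delta(x) \le C I_n$ at a.e. such $x$. Combined with convexity ($D^2 u_\delta \ge 0$) this again bounds the operator norm. Patching the interior and exterior bounds yields the claim, with a constant depending only on $\Pi$. There is no real obstacle beyond bookkeeping; the only subtlety is verifying that the constants entering Proposition \ref{FFvarrholocLem}$(ii)$ are genuinely independent of $\delta$, which is guaranteed since $\delta|u_\delta|_{L^\infty(B)}$ is controlled uniformly in $\delta \in (0,1)$ via \eqref{veeDelBounds} and the choice of $B$.
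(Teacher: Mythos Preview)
Your argument is correct and follows the same route as the paper: split $\R^n$ into a fixed ball containing every $\Omega_\delta$ (using Corollary~\ref{boundedDerOmega}) and its complement, apply Proposition~\ref{FFvarrholocLem}$(ii)$ on the ball to bound $F(D^2u_\delta)$ from below and convert this via convexity and uniform ellipticity into a Hessian bound, and use Lemma~\ref{W2infFarOutBound} outside. The only cosmetic gap is that the ball $B$ to which Proposition~\ref{FFvarrholocLem} applies was fixed at the start of the penalty-method section to satisfy \eqref{BbigEnough}; your choice $B=B_{R+2}(0)$ with $R$ from Corollary~\ref{boundedDerOmega} need not satisfy that condition, so you should instead (as the paper does) take the penalty-method radius large enough to contain your $G$ and satisfy \eqref{BbigEnough} simultaneously.
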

\begin{proof}
Choose $R_1>0$ so that $\Omega_\delta\subset B_{R_1}(0)$ for all $\delta\in (0,1)$; such an $R_1$ exists by corollary \ref{boundedDerOmega}.  Lemma \ref{W2infFarOutBound} gives that there is a universal constant $C$ such that  
$$
D^2u_\delta(x)\le \frac{C}{R_1}I_n. 
$$
for almost every $|x|\ge 2R_1$. 

\par Now select $R>2R_1$ so large that \eqref{BbigEnough} is satisfied. Part $(ii)$ of Proposition \ref{FFvarrholocLem}, with $G=B_{2R_1}(0)$ and $B=B_R(0)$, gives a constant $C_1$ depending on $R_1$ and the list $\Pi$ such that  
\begin{equation}\label{FD2uBounds}
-C_1\le F(D^2u_\delta(x))
\end{equation}
for almost every $|x|\le 2R_1$. Since $u_\delta$ is convex (Proposition \ref{UdelConvex}), the uniform ellipticity assumption on $F$ implies 
\begin{equation}\label{FD2uBounds2}
F(D^2u_\delta(x))\le -\theta\Delta u_\delta(x)
\end{equation}
for almost every $x\in \R^n$. Therefore, we can again appeal to the convexity of $u_\delta$ and employ \eqref{FD2uBounds} and \eqref{FD2uBounds2} 
to get 
$$
D^2u_\delta(x)\le \Delta u_\delta(x) I_n\le \frac{C_1}{\theta}I_n
$$
for almost every $|x|\le 2R_1$. 
\end{proof}
% part iii 
\begin{proof} (part $(ii)$ of Theorem \ref{Thm1})
By the convexity of $u_\delta$ and Corollary \ref{W2infuDel}, there is a constant $C$ independent of $\delta\in (0,1)$ for which 
$$
0\le u_\delta(x+h)-2u_\delta(x)+u_\delta(x-h)\le C|h|^2
$$
for every $x, h\in \R^n$. The assertion now follows from passing to the limit along an appropriate sequence $\delta$ tending to $0$ 
as was done in the proof of part $(i)$ of Theorem \ref{Thm1}.
\end{proof}
\begin{rem}\label{remboundedOmega}
By part $(ii)$ of Theorem \ref{Thm1}, $Du_\delta$ exists everywhere and is continuous. By Corollary \ref{boundedDerOmega}  
$$
\Omega_\delta=\{x\in \R^n : H(Du_\delta(x))<0\}
$$
is open and bounded. 
\end{rem}

\section{1D and rotationally symmetric problems}\label{1DandRotSymmSect}
Now we will discuss a few results for solutions of the eigenvalue problem \eqref{EigProb} when the dimension 
$n=1$ and when $F,f,H$ satisfy the symmetry hypothesis  \eqref{SymmetryCond}: 
$$
\begin{cases}
f(Ox)=f(x)\\
H(O^tp)=H(p)\\
F(OMO^t)=F(M)
\end{cases}
$$
for each $x,p\in \R^n$, $M\in \Sn$ and orthogonal $n\times n$ matrix $O$.  First, we prove Theorem \ref{SymmRegThm} which 
involves the regularity of symmetric eigenfunctions. Then we consider the uniqueness of eigenfunctions of \eqref{EigProb} that 
satisfy the growth condition \eqref{ellgrowth}.

% Rotational problem
\begin{proof} (Theorem \ref{SymmRegThm}) The assumption \eqref{SymmetryCond} implies that $u_\delta$ is radial; this follows from the uniqueness assertion \ref{DeltaCompProp}.  In particular, $u^*$ constructed in the proof of part $(i)$ of Theorem \ref{Thm1} will also be radial. Consequently, there is a function $\phi: [0,\infty)\rightarrow \R$ such that $u^*(x)=\phi(|x|)$. As $u^*$ is convex, $\phi$ is nondecreasing 
and convex. Moreover, for almost every $x\in \R^n$
$$
\begin{cases}
Du^*(x)=\phi'(|x|)\frac{x}{|x|}\\
D^2u^*(x)=\phi''(|x|)\frac{x\otimes x}{|x|^2} +\frac{\phi'(|x|)}{|x|}\left(I_n - \frac{x\otimes x}{|x|^2}\right)
\end{cases}.
$$
\par Similar arguments imply $f(x)=f_0(|x|)$ for a nondecreasing, convex function $f_0$. Likewise $H(p)$ only depends on $|p|$ and so $\{p\in \R^n: H(p)\le 0\}$ is a ball. Thus, $\ell(v)=a|v|$ for some 
$a>0$, and as a result $H_0(p)=|p|-a$.  The assumption \eqref{SymmetryCond} also implies  $F=F(M)$ only depends on the eigenvalues of $M$. 
In particular, the symmetric function $G(\mu_1,\dots,\mu_n):=F(\diag(\mu_1,\dots,\mu_n))$ completely determines $F$.  And as $F$ is uniformly elliptic 
$$
G(\mu_1+h,\dots,\mu_n)-G(\mu_1,\dots,\mu_n)\le-\theta h.
$$
for $h\ge 0$. 

\par From our comments above, $\phi$ satisfies
\begin{equation}\label{phiEq}
\max\left\{\lambda^*+G\left(\frac{\phi'}{r},\dots,\frac{\phi'}{r}, \phi''\right) - f_0(r), \phi'-a\right\}=0, \quad r>0.
\end{equation}
And since $\phi'$ is nondecreasing, 
$$
\{r>0: \phi'(r)<a\}=(0,r_0)
$$
for some $r_0>0$; this is another way of expressing $\Omega:=\{x\in \R^n: H(Du^*(x))<0\}=B_{r_0}(0)$.  Part $(ii)$ of Theorem \ref{Thm1} then implies $u^*\in C^2(\Omega)\cap C^{1,1}_\text{loc}(\R^n)$.  Thus, $\phi'=a$ for $r\ge a$ and $\phi\in C^2(\R\setminus\{r_0\})\cap C^{1,1}_\text{loc}(\R)$. Furthermore, as $\phi''(r_0+)=0$, we just need to show $\phi''(x_0-)=0$. 

\par Recall the left hand limit $\phi''(x_0-)$ exists and is nonnegative since $\phi$ is convex.  By \eqref{phiEq}, 
$$
\lambda^*+G\left(\frac{\phi'}{r},\dots,\frac{\phi'}{r}, \phi''\right) - f_0(r)\le 0, \quad r>0.
$$
Sending $r\rightarrow r_0^+$ gives
\begin{equation}\label{Geq1}
\lambda^*+G\left(\frac{a}{r_0},\dots,\frac{a}{r_0}, 0\right) - f_0(r_0)\le 0.
\end{equation}
Now, 
$$
\lambda^*+G\left(\frac{\phi'}{r},\dots,\frac{\phi'}{r}, \phi''\right) - f_0(r)= 0, \quad r\in (0,r_0)
$$
and sending $r\rightarrow r_0^-$ gives 
\begin{equation}\label{Geq2}
\lambda^*+G\left(\frac{a}{r_0},\dots,\frac{a}{r_0},\phi''(x_0-) \right) - f_0(r_0)=0.
\end{equation}
Combining \eqref{Geq1} and \eqref{Geq2} gives 
$$
G\left(\frac{a}{r_0},\dots,\frac{a}{r_0}, 0\right) \le f_0(r_0)-\lambda^*=G\left(\frac{a}{r_0},\dots,\frac{a}{r_0},\phi''(x_0-) \right).
$$
By the monotonicity of $G$ in each of its arguments, $\phi''(x_0-)\le 0$. Thus $\phi''(x_0)=0$, and as a result, $u^*\in C^2(\R^n)$.  
\end{proof}

% Uniquness n=1
\begin{prop}\label{UniqunessN1}
Assume $n=1$. Any two convex solutions of \eqref{EigProb} that satisfy \eqref{ellgrowth} differ by an additive constant. 
\end{prop}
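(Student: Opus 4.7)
The plan is to exploit the rigid one-dimensional structure that convexity imposes on solutions of the eigenvalue equation. Let $u,v$ be two convex eigenfunctions satisfying \eqref{ellgrowth}, and write $\{p\in\R:H(p)\le 0\}=[-a_-,a_+]$ with $a_\pm>0$. Convexity makes $u'$ monotone nondecreasing (via one-sided derivatives), and Lemma \ref{HLipLem} combined with the growth condition forces $u'(x)\to -a_-$ as $x\to-\infty$ and $u'(x)\to a_+$ as $x\to+\infty$; hence the active set $\Omega_u:=\{x:H(u'(x))<0\}=\{x:u'(x)\in(-a_-,a_+)\}$ is an interval $(\alpha_u,\beta_u)$, and similarly $(\alpha_v,\beta_v)$ for $v$.

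First I would extract the explicit shape of $u$. Off $\Omega_u$, monotonicity forces $u'\equiv -a_-$ on $(-\infty,\alpha_u]$ and $u'\equiv a_+$ on $[\beta_u,\infty)$, so $u$ is affine on each ray. Inside $\Omega_u$, the PDE reduces (since $H(u')<0$) to $\lambda^*+F(u'')=f$. In one dimension the standing assumptions \eqref{Fassump} give $F(M)=-cM$ for $M\ge 0$ with $c=-F(1)\in[\theta,\Theta]$, and combined with $u''\ge 0$ from convexity this yields the ODE $u''=(\lambda^*-f)/c$ on $\Omega_u$; in particular $f\le \lambda^*$ there. Conversely, at any $x<\alpha_u$ the subsolution inequality with $u''=0$, $u'=-a_-$, $H(u')=0$ forces $f(x)\ge \lambda^*$. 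Continuity of $f$ then yields $f(\alpha_u)=f(\beta_u)=\lambda^*$.

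Next I would pin down the transition points. The two inequalities above together with the convexity of $f$ give $f\le \lambda^*$ on $[\alpha_u,\beta_u]$ and $f\ge \lambda^*$ on $\R\setminus(\alpha_u,\beta_u)$; superlinearity of $f$ upgrades this to the identification $\{f\le\lambda^*\}=[\alpha_u,\beta_u]$ exactly. The identical argument applied to $v$ gives $\{f\le\lambda^*\}=[\alpha_v,\beta_v]$, so $(\alpha_u,\beta_u)=(\alpha_v,\beta_v)=:(\alpha,\beta)$. On this common interval both $u$ and $v$ solve the Cauchy problem $w''=(\lambda^*-f)/c$, $w'(\alpha)=-a_-$, and ODE uniqueness gives $u'\equiv v'$ on $[\alpha,\beta]$. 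Since $u'\equiv v'$ on the complement as well, $u-v$ is constant.

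The main obstacle is the degenerate regime in which $\Omega_u$ is empty, so that $u$ must be a ``V-shape'' $u(x)=a_+(x-x_0)_+ + a_-(x_0-x)_+$ with kink $x_0$ satisfying $f(x_0)\le\lambda^*$ and $f\ge\lambda^*$ on $\R\setminus\{x_0\}$. Here I would argue that these constraints, together with convexity and superlinearity of $f$, force $\lambda^*=\inf_\R f$ and pin down $x_0$ as the (essentially unique) minimizer; a complementary step excludes the mixed configuration in which one of $u,v$ has empty active set while the other does not, since that would make $\{f\le\lambda^*\}$ simultaneously an interval of positive length and a singleton. The other delicate point is justifying the pointwise interpretation of $u'$, which is handled either by convexity alone (one-sided derivatives exist everywhere and agree off a countable set) or by the $C^{1,1}$ regularity of Theorem \ref{Thm1}(ii) when its additional hypotheses apply.
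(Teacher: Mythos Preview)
Your overall strategy matches the paper's: reduce to showing that the two active intervals $(\alpha_u,\beta_u)$ and $(\alpha_v,\beta_v)$ coincide, then invoke ODE uniqueness on the common interval together with the matching affine pieces outside. The key step, however, has a genuine gap. Your identification $\{f\le\lambda^*\}=[\alpha_u,\beta_u]$ does not follow from convexity and superlinearity of $f$: nothing prevents $f$ from being flat at level $\lambda^*$ on an interval strictly containing $[\alpha_u,\beta_u]$ (e.g.\ $f\equiv\lambda^*$ on $[\beta_u,\beta_u+1]$), in which case $\{f\le\lambda^*\}\supsetneq[\alpha_u,\beta_u]$. The correct intrinsic description is $[\alpha_u,\beta_u]=\overline{\{f<\lambda^*\}}$, and the missing ingredient is precisely the observation that $f$ cannot equal $\lambda^*$ on a one-sided neighborhood of $\alpha_u$ or $\beta_u$ inside $\Omega_u$: if $f\equiv\lambda^*$ on $(\alpha_u,\alpha_u+\epsilon)$ then $u''=0$ there, so $u'\equiv -a_-$ and $H(u')=0$, contradicting $H(u')<0$ on $\Omega_u$. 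With this correction your route works. The paper bypasses any intrinsic characterization and argues by direct comparison: assuming (say) $\alpha_1<\alpha_2$, it uses $u_2''=0$ on $(\alpha_1,\alpha_2)$ to deduce $f\ge\lambda^*$ there, whence $F(u_1'')\ge 0$, whence $u_1''\le 0$, whence (by convexity) $u_1''=0$ and $u_1'\equiv -a_-$, contradicting $H(u_1')<0$ on $I_1$. This is exactly the contradiction you would obtain after repairing your argument.

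Two smaller remarks. First, the ``degenerate'' case $\Omega_u=\emptyset$ that you flag as the main obstacle does not occur: at the kink $x_0$ of a V-shape one has $(p,X)\in J^{2,-}u(x_0)$ for every $p\in(-a_-,a_+)$ and every $X\in\R$, and since $H(p)<0$ while $F(X)\to-\infty$ as $X\to+\infty$, the supersolution inequality $\lambda^*+F(X)-f(x_0)\ge 0$ fails. Second, both your division by $c=-F(1)$ and the paper's inversion $u_1''=F^{-1}(f-\lambda^*)$ tacitly use uniform ellipticity ($\theta>0$), which is stronger than the standing assumption \eqref{Fassump}; this is consistent with how the paper invokes the $C^2$ regularity, but is worth making explicit.
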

\begin{proof}
Assume $u_1, u_2$ are convex and satisfy 
$$
\begin{cases}
\max\{\lambda^*+F(u'')-f,H(u')\}=0, \quad x\in \R\\
\lim\frac{u(x)}{\ell(x)}=1
\end{cases}.
$$
As in the proof of Theorem \ref{SymmRegThm}, we may deduce that necessarily $u_1,u_2\in C^2(\R)$. Also observe
$$
H_0(p)=\max_{v\pm 1}\left\{pv-\ell(v)\right\}=\max\{p-\ell(1), -p -\ell(-1)\}.
$$
In particular, 
$$
\{p\in \R: H(p)\le 0\}=[-\ell(-1), \ell(1)].
$$
It then follows from the convexity of $u_1$ and $u_2$ that 
$$
I_1:=\{x\in \R: H(u_1'(x))<0\}\quad \text{and}\quad I_2:=\{x\in \R: H(u_2'(x))<0\}
$$
are bounded, open intervals. 

\par Let us first assume $I_1=I_2=(\alpha,\beta)$. Then 
$$
\lambda^*+F(u_1'')-f=0=\lambda^*+F(u_2'')-f, \quad x\in  (\alpha,\beta)
$$
As $F$ is uniformly elliptic $u_1''=u_2''=F^{-1}(f-\lambda^*)$ for $x\in (\alpha,\beta)$. Hence, $u'_1-u_2'$ is constant. The above characterization of $\{p\in \R: H(p)\le 0\}$ 
also implies
$$
\begin{cases}
u_1'=u_2'=-\ell(-1), \quad x\in (-\infty,\alpha]\\
u_1'=u_2'=\ell(1), \quad x\in [\beta,\infty)
\end{cases}
$$
It now follows that necessarily $u'_1=u'_2$ and so $u_1-u_2$ is constant. 

\par Now we are left to prove that $I_1=I_2$; for definiteness, we shall assume $I_1= (\alpha_1,\beta_1)$ and $I_2= (\alpha_2,\beta_2)$. First suppose that $I_1\cap I_2=\emptyset$ and without loss of generality  $\beta_1<\alpha_2$. 
Then on $I_1$, $\lambda^*+F(u_1'')-f=0$ and $u_2'=-\ell(-1)$. We always have $\lambda^*+F(u_2'')-f\le 0$ which implies $\lambda^*-f\le 0$ on $I_1$ since $u_2''=0$. It then 
follows that $F(u_1'')=f-\lambda^*\ge 0$ and thus $u_1''\le 0.$ As $u$ is convex, $u_1''=0$ in $I_1.$  However, $u_1'$ is constant and it would then be impossible for $u'_1(\alpha_1)=-\ell(-1)<0$ and 
$u'_1(\beta_1)=\ell(1)>0$.  Therefore, $I_1\cap I_2\neq\emptyset$.

\par Without any loss of generality, we may assume $\alpha_1<\alpha_2<\beta_1$. Repeating our argument above, we find $u_1''=0$ on $(\alpha_1,\alpha_2)$.  It must be that $u_1'$ is constant 
and thus equal to $-\ell(-1)$ on $[\alpha_1,\alpha_2]$. But then $H(u_1')=0$ on $[\alpha_1,\alpha_2]$, which contradicts the definition of $I_1$. Hence, $I_1=I_2$ and the assertion follows. 
\end{proof}

\begin{prop}\label{ConvUniqueness}
Assume the symmetry condition \eqref{SymmetryCond} and that $F$ is uniformly elliptic. Then any two convex, rotationally symmetric solutions of \eqref{EigProb} that satisfy
\eqref{ellgrowth} differ by an additive constant. 
\end{prop}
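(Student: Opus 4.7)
The plan is to adapt the one-dimensional strategy of Proposition \ref{UniqunessN1} to the radial setting. By \eqref{SymmetryCond}, each $u_i(x)=\phi_i(|x|)$ for a convex, nondecreasing profile $\phi_i$; the constraint set $\{H\le 0\}=\overline{B_a(0)}$ is a ball, so $\ell(v)=a|v|$; and $F$ is determined by a function $G$ of matrix eigenvalues, symmetric in its arguments and (by uniform ellipticity) strictly decreasing in each. The gradient constraint and convexity force $\Omega_i:=\{H(Du_i)<0\}$ to be an open ball $B_{r_0^{(i)}}(0)$, with $\phi_i'<a$ on $[0,r_0^{(i)})$ and $\phi_i(r)=\phi_i(r_0^{(i)})+a(r-r_0^{(i)})$ on $[r_0^{(i)},\infty)$. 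On $\Omega_i$ the PDE reduces to $F(D^2u_i)=f-\lambda^*$, which is uniformly elliptic and concave (since $F$ is superadditive and positively homogeneous), so Evans--Krylov interior regularity yields $\phi_i\in C^{2,\alpha}([0,r_0^{(i)}))$ with $\phi_i'(0)=0$.

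The crucial regularity input is that $\phi_i$ is actually $C^1$ across $r_0^{(i)}$, i.e.\ $\phi_i'(r_0^{(i)}-)=a$. Suppose instead $\phi_i'(r_0^{(i)}-)=b<a$. For $s\in(b,a)$ and $M\ge 0$, set $X_M:=M\,\hat{x}_0\hat{x}_0^T+(a/r_0^{(i)})(I_n-\hat{x}_0\hat{x}_0^T)$ at some $x_0$ with $|x_0|=r_0^{(i)}$. A direction-by-direction computation using the left Taylor expansion of $\phi_i$ and the linear extension on the right shows that $(s\hat{x}_0,X_M)\in J^{2,-}u_i(x_0)$ for every $M$: in any direction with $\cos\theta\ne 0$ the leading linear term $(a-s)t\cos\theta$ (when $\cos\theta>0$) or $(s-b)t|\cos\theta|$ (when $\cos\theta<0$) is strictly positive and dominates on a sufficiently small neighborhood, while the purely tangential directions match exactly at second order by the choice of $a/r_0^{(i)}$. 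Since $H(s\hat{x}_0)<0$, the viscosity supersolution condition forces $\lambda^*+F(X_M)\ge f(x_0)$; but uniform ellipticity gives $F(X_M)\le F(X_0)-\theta M\to-\infty$ as $M\to\infty$, a contradiction.

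To finish, add a constant so that $u_1(0)=u_2(0)$ and, without loss of generality, assume $r_0^{(1)}\le r_0^{(2)}$. On $B:=B_{r_0^{(1)}}(0)$ both $u_i$ are classical solutions of $F(D^2u)=f-\lambda^*$, so $w:=u_1-u_2$ satisfies the Pucci extremal inequalities $\mathcal{M}^-(D^2w)\le 0\le \mathcal{M}^+(D^2w)$ in $B$. Since $w$ is radial its boundary data on $\partial B$ is constant, and the strong maximum and minimum principles for Pucci operators together with $w(0)=0$ yield $w\equiv 0$ on $\overline{B}$. Combined with the $C^1$ conclusion of the previous paragraph, this gives $\phi_2'(r_0^{(1)})=\phi_1'(r_0^{(1)})=a$; were $r_0^{(1)}<r_0^{(2)}$, then $r_0^{(1)}$ would lie in $\Omega_2$ where $\phi_2'<a$, a contradiction. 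Hence $r_0^{(1)}=r_0^{(2)}$, and the explicit linear extension beyond that radius gives $\phi_1\equiv\phi_2$, so $u_1$ and $u_2$ differ by the normalizing constant. The main obstacle is the $C^1$ regularity at $r_0^{(i)}$: without it one cannot preclude an upward jump of $\phi_i'$ at the boundary of the inactive region, and the final contradiction would fail.
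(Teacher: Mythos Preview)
Your approach differs from the paper's: the paper compares on the \emph{larger} ball $B_{r_2}$ (where $u_2$ is a classical solution and $u_1$ only a subsolution of $F(D^2u)=f-\lambda^*$), applies the strong maximum principle and Hopf's lemma to obtain a strict inequality of outer normal derivatives at $\partial B_{r_2}$, and contradicts $|Du_1|\le a$. You instead compare on the \emph{smaller} ball and supply a separate $C^1$-across-the-free-boundary argument. Your comparison step via the Pucci extremal inequalities is fine, and the overall strategy is a legitimate alternative.

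The gap is in your $C^1$ step. The claim that $(s\hat x_0,X_M)\in J^{2,-}u_i(x_0)$ with tangential eigenvalue $a/r_0^{(i)}$ is false, and the phrase ``direction-by-direction'' is precisely where it breaks: the subjet condition requires
\[
u_i(x_0+h)-u_i(x_0)-s\hat x_0\cdot h-\tfrac12 X_Mh\cdot h\ \ge\ o(|h|^2)
\]
\emph{uniformly} in $h$, not merely along each fixed ray. Write $h=\alpha\hat x_0+h^\perp$ with $|h^\perp|=\beta$, so that $|x_0+h|-r_0^{(i)}=\alpha+\tfrac{\beta^2}{2r_0^{(i)}}+O(|h|^3)$. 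Approach along exterior points with $\alpha\approx -\beta^2/(2r_0^{(i)})$ (just outside the sphere, nearly tangential). Then the linear gain $(a-s)\alpha$ is \emph{negative} of order $|h|^2$, while your tangential quadratic terms cancel exactly by the choice $a/r_0^{(i)}$; the net deficit $-\tfrac{a-s}{2r_0^{(i)}}|h|^2$ violates the subjet inequality. An analogous failure occurs from the interior side. The remedy is easy: take the tangential eigenvalue to be $s/r_0^{(i)}$ rather than $a/r_0^{(i)}$. On the exterior the error becomes $(a-s)\bigl[\alpha+\tfrac{\beta^2}{2r_0^{(i)}}\bigr]-\tfrac{M}{2}\alpha^2+O(|h|^3)$, and on the interior (using only convexity of $\phi_i$, so $\phi_i(r)-\phi_i(r_0^{(i)})\ge b(r-r_0^{(i)})$) it becomes $(s-b)\bigl[-\alpha-\tfrac{\beta^2}{2r_0^{(i)}}\bigr]-\tfrac{M}{2}\alpha^2+O(|h|^3)$; both are $\ge o(|h|^2)$ for $|h|$ small depending on $M$, and the contradiction via $F(X_M)\le -\theta M+C\to-\infty$ then goes through.
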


\begin{proof}
As remarked in the above proof of Theorem \ref{SymmRegThm}, the symmetry assumption on $H$ results in $H_0(p)=|p|-a$ for some $a>0$. Now assume $u_1, u_2$ are convex, rotationally symmetric solutions of \eqref{EigProb} that satisfy \eqref{ellgrowth}. Then it follows
$$
\{x\in\R^n: H(Du_i(x))<0\}=B_{r_i}(0)
$$
for $i=1,2$ and some $r_1,r_2>0$.  Thus, 
\begin{equation}\label{u1u2const}
u_i(x)=a|x|+b_i, \quad |x|\ge r_i
\end{equation}
for some constants $b_i$. If $r_1=r_2=:r$, then 
$$
\begin{cases}
F(D^2u_1)=f(x)-\lambda^*=F(D^2u_2), \quad &x\in B_{r}(0)\\
u_1=ar+b_1, \quad u_2=ar+b_2, \quad & x\in \partial B_r(0).
\end{cases}
$$
As $F$ is uniformly elliptic, $u_1\equiv u_2+b_1-b_2$ on $\overline{B_r(0)}$ and thus on $\R^n$.

\par Now suppose $r_1<r_2$. And set $v:= u_2+b_1-b_2$; from \eqref{u1u2const} $u_1\equiv v$ for $|x|\ge r_2$. Since, 
$$
F(D^2u_1)\le f(x)-\lambda^*=F(D^2v),\quad x\in B_{r_2}
$$
the maximum principle implies $u_1\le v$ in $\overline{B}_{r_2}$. The strong maximum principle implies $u_1\equiv v$ in $\overline{B}_{r_2}$, from 
which we conclude the proof, or  $u_1< v$ in $B_{r_2}$. However if $u_1< v$ in $B_{r_2}$, Hopf's Lemma (see the appendix of \cite{Armstrong}) implies 
\begin{equation}\label{HopfCond}
\frac{\partial v}{\partial \nu}(x_0)<\frac{\partial u_1}{\partial \nu}(x_0)
\end{equation}
for each $x_0\in\partial B_{r_2}$. Here $\nu=x_0/|x_0|$.  As $u$ and $v$ are rotational and convex, \eqref{HopfCond} implies
$$
|Dv(x_0)|<|Du_1(x_0)|\le a.
$$
However, $|Dv|=a$ on $\partial B_{r_2}$. This contradicts the hypothesis that $r_1<r_2$.  
 \end{proof}

%%%%%%%%%%%%%%%%%%%%%%%%%%%%%%%%%% Regularity %%%%%%%%%%%%%%%%%%%%%%%%%%%%%%%%%
\section{Minmax formulae}\label{MinMaxSect}
This final section is devoted entirely to the proof of Theorem \ref{minmaxThm}.  In particular, we will make use of the characterizations of $\lambda^*$ given in \eqref{LamChar1} and \eqref{LamChar2}.  
We will also use that the functions $H$ and $H_0$ have the same sign. 

\par  Let $\phi \in C^2(\R^n)$ and suppose that $H(D\phi)\le 0$. If 
$$
\lambda_\phi:=\inf_{\R^n}\left\{-F(D^2\phi(x))+f(x)\right\}>-\infty,
$$ 
then $\phi$ is a subsolution of \eqref{EigProb} with eigenvalue $\lambda_\phi$. By \eqref{LamChar1}, $\lambda_\phi\le \lambda^*$. Hence, $\lambda_-=\sup_\phi\lambda_\phi\le \lambda^*.$  Now let $\psi\in C^2(\R^n)$ satisfy 
$$
\liminf_{|x|\rightarrow \infty}\frac{\psi(x)}{\ell(x)}\ge 1.
$$
If 
$$
\mu_\psi:=\sup_{H(D\psi)<0}\left\{-F(D^2\psi(x))+f(x)\right\}<\infty,
$$ 
then $\psi$ is a supersolution of \eqref{EigProb} with eigenvalue $\mu_\psi$. It follows from  \eqref{LamChar2} that $\lambda_\psi\ge \lambda^*$.  As a result, $\lambda_+=\inf_\psi\mu_\psi\ge \lambda^*.$

\par Let $u^*$ be an eigenfunction associated with $\lambda^*$ that satisfies $D^2u^*\in L^\infty(\R^n; S_n(\R))$. As in Remark \ref{remboundedOmega}, 
$$
\Omega_0:=\{x\in \R^n: H(Du^*(x))<0\}
$$
is open and bounded.  For $\epsilon>0$ and 
$\tau>1$, set 
$$
u^{\epsilon,\tau}=\tau u^\epsilon=\tau(\eta^\epsilon*u^*).
$$
Here $u^\epsilon=\eta^\epsilon*u^*$ is the standard mollification of $u^*$. Observe that $H_0$ 
is Lipschitz continuous with Lipschitz constant no more than one; in view of the basic estimate $|Du^*-Du^\epsilon|_{L^\infty(\R^n)}\le \epsilon |D^2u^*|_{L^\infty(\R^n)}$,
\begin{equation}\label{LipHCond}
H_0(Du^*(x))\le H_0(Du^\epsilon(x))+\epsilon |D^2u^*|_{L^\infty(\R^n)}, \quad x\in \R^n.
\end{equation}

\par So for any $x\in \R^n$ where $H(Du^{\epsilon,\tau}(x))<0$, 
\begin{equation}\label{LipHCond2}
H_0(Du^{\epsilon}(x))=H_0\left(\frac{1}{\tau}Du^{\epsilon,\tau}(x)+\frac{\tau-1}{\tau}0\right)<
\frac{\tau-1}{\tau}H_0(0)<0.
\end{equation}
In view of \eqref{LipHCond} and \eqref{LipHCond2}, we can choose $\epsilon_1=\epsilon_1(\tau)>0$ such that 
$$
\varrho:=-\left(\frac{\tau-1}{\tau}H_0(0)+\epsilon_1 |D^2u^*|_{L^\infty(\R^n)}\right)>0
$$
and
$$
\{x\in \R^n: H(Du^{\epsilon,\tau}(x))<0\}\subset\{x\in \R^n: H_0(Du^*(x))<-\varrho\}
$$
for $\epsilon\in (0,\epsilon_1)$.  Since $\{x\in \R^n: H_0(Du^*(x))<-\varrho\}$ is a proper open subset 
of $\Omega_0$ we can further select $\epsilon_2=\epsilon_2(\tau)>0$ so that 
\begin{equation}\label{ImportantIncludsionepstau}
\{x\in \R^n: H_0(Du^*(x))<-\varrho\}\subset \Omega^\epsilon:=
\{x\in \R^n: \text{dist}(x,\partial\Omega_0)>\epsilon\}
\end{equation}
for $\epsilon\in (0,\epsilon_2)$.

\par By assumption, $u^*$ satisfies $\lambda^* +F(D^2u^*)-f=0$ for almost every $x\in \Omega_0$. Mollifying both sides of this equation gives $
\lambda^*+F(D^2u^*)^\epsilon-f^\epsilon=0$ in $\Omega^\epsilon$.
Since $F$ is concave 
$$
F(D^2u^\epsilon(x))=F\left(\int_{\R^n}\eta^\epsilon(y)D^2u^*(x-y)dy\right)\ge \int_{\R^n}\eta^\epsilon(y)F(D^2u^*(x-y))dy=F(D^2u^*)^\epsilon(x).
$$
Consequently, $\lambda^*+F(D^2u^\epsilon)-f^\epsilon\ge 0,$ in $\Omega^\epsilon$. And since 
$\Omega_0$ is bounded, $|f^\epsilon-f|_{L^\infty(\Omega_0)}=o(1)$ as $\epsilon \rightarrow 0^+$. Therefore,
\begin{equation}\label{lambdaMollifiedEqn}
\lambda^*+F(D^2u^\epsilon)-f\ge o(1), \quad x\in \Omega^\epsilon.
\end{equation}
as $\epsilon\rightarrow 0^+$. 

\par We can now combine the inclusion \eqref{ImportantIncludsionepstau} and the inequality \eqref{lambdaMollifiedEqn}. For $\epsilon\in (0,\min\{\epsilon_1,\epsilon_2\}$

\begin{align*}
\lambda^+&\le \sup_{H(Du^{\epsilon,\tau})<0}\left\{-F(D^2u^{\epsilon,\tau}(x)) +f(x)\right\}\\
&= \sup_{H(Du^{\epsilon,\tau})<0}\left\{-\tau F(D^2u^{\epsilon}(x)) +f(x)\right\}\\
&= \sup_{H(Du^{\epsilon,\tau})<0}\left\{- F(D^2u^{\epsilon}(x)) +f(x)\right\}+O(\tau-1)\\
&\le \sup_{\Omega^\epsilon}\left\{- F(D^2u^{\epsilon}(x)) +f(x)\right\}+O(\tau-1)\\
&\le \lambda^* +o(1)+O(\tau-1).
\end{align*}
We conclude by first ending $\epsilon\rightarrow 0^+$ and then $\tau\rightarrow 1^+$.

% Reference Section

\end{document}